

\documentclass[12pt]{amsart}


\usepackage[margin = 1in]{geometry}
\usepackage{amsfonts, amsmath,amscd, amssymb,
latexsym, mathrsfs, mathtools, 
slashed, stmaryrd, verbatim, wasysym, bbm, url}
\usepackage[usenames]{xcolor}
\usepackage{enumerate}
\usepackage[shortlabels]{enumitem}
\usepackage[utf8]{inputenc}    

\usepackage{hyperref}
\hypersetup{
  colorlinks   = true,    
  urlcolor     = blue,    
  linkcolor    = blue,    
  citecolor    = red      
}
      
\usepackage[backend=bibtex,bibstyle=authoryear,citestyle=alphabetic,firstinits=true,doi=false,isbn=false,url=false]{biblatex}
\addbibresource{references.bib}

\DeclareFieldFormat{labelalphawidth}{\mkbibbrackets{#1}}

\defbibenvironment{bibliography}
  {\list
     {\printtext[labelalphawidth]{%
        \printfield{prefixnumber}%
    \printfield{labelalpha}%
        \printfield{extraalpha}}}
     {\setlength{\labelwidth}{\labelalphawidth}%
      \setlength{\leftmargin}{\labelwidth}%
      \setlength{\labelsep}{\biblabelsep}%
      \addtolength{\leftmargin}{\labelsep}%
      \setlength{\itemsep}{\bibitemsep}%
      \setlength{\parsep}{\bibparsep}}%
      }
  {\endlist}
  {\item}

\newtheorem*{acknowledgements}{Acknowledgements}
\newtheorem*{theorem*}{Theorem}
\newtheorem{theorem}{Theorem}[section]
\newtheorem{corollary}{Corollary}[section]

\newtheorem{example}{Example}[section]
\newtheorem{lemma}[corollary]{Lemma}
\newtheorem{proposition}[corollary]{Proposition}
\theoremstyle{definition}
\newtheorem{remark}[example]{Remark}

\numberwithin{equation}{section}

\let\oldsqrt\sqrt
\def\sqrt{\mathpalette\DHLhksqrt}
\def\DHLhksqrt#1#2{%
\setbox0=\hbox{$#1\oldsqrt{#2\,}$}\dimen0=\ht0
\advance\dimen0-0.2\ht0
\setbox2=\hbox{\vrule height\ht0 depth -\dimen0}%
{\box0\lower0.4pt\box2}}


\usepackage{verbatim} 
\DeclareFontFamily{U}{mathx}{\hyphenchar\font45}
\DeclareFontShape{U}{mathx}{m}{n}{
      <5> <6> <7> <8> <9> <10>
      <10.95> <12> <14.4> <17.28> <20.74> <24.88>
      mathx10
      }{}
\DeclareSymbolFont{mathx}{U}{mathx}{m}{n}
\DeclareFontSubstitution{U}{mathx}{m}{n}
\DeclareMathAccent{\widecheck}{0}{mathx}{"71}


\renewcommand{\tilde}{\widetilde}
\renewcommand{\bar}{\overline}



\newcommand\eps\varepsilon
\renewcommand\epsilon\varepsilon





\newcommand{\abs}[1]{\left\lvert #1 \right\rvert}
\newcommand{\smallabs}[1]{\lvert #1 \rvert}

\newcommand{\norm}[1]{\lVert #1 \rVert}
\newcommand\inner[1]{\langle #1 \rangle}






\newcommand\dist{\operatorname{dist}}

\newcommand\Mand{\text{ and }}

\newcommand\Mfor{\text{ for }}

\newcommand\Mwith{\text{ with }}

\newcommand\paperintro%
        {%
         }
\newcommand\paperbody%
        {%
         }


\newcommand\bbE{\mathbb{E}}

\newcommand\bbG{\mathbb{G}}
\newcommand\bbH{\mathbb{H}}

\newcommand\bbL{\mathbb{L}}

\newcommand\bbN{\mathbb{N}}

\newcommand\bbR{\mathbb{R}}

\newcommand\bbZ{\mathbb{Z}}

\newcommand\cG{\mathcal{G}}

\newcommand\cL{\mathcal{L}}

\newcommand\cO{\mathcal{O}}
\newcommand\cP{\mathcal{P}}

\newcommand\cS{\mathcal{S}}

\newcommand\mf[1]{\mathfrak{ #1}}


\DeclareMathAlphabet{\mathpzc}{OT1}{pzc}{m}{it}

\hyphenation{mezzo-perversities mezzo-perversity}

\hyphenation{di-men-sion-al}


\newcommand{\sbs}{\subset}

\renewcommand{\bf}{\textbf}

\usepackage[refpage]{nomencl}


\makenomenclature

\usepackage{tikz, tikz-cd, tkz-euclide}
\usetikzlibrary{calc,positioning,intersections,quotes,decorations.markings}
\makeatletter
\def\@tocline#1#2#3#4#5#6#7{\relax
  \ifnum #1>\c@tocdepth 
  \else
    \par \addpenalty\@secpenalty\addvspace{#2}%
    \begingroup \hyphenpenalty\@M
    \@ifempty{#4}{%
      \@tempdima\csname r@tocindent\number#1\endcsname\relax
    }{%
      \@tempdima#4\relax
    }%
    \parindent\z@ \leftskip#3\relax \advance\leftskip\@tempdima\relax
    \rightskip\@pnumwidth plus4em \parfillskip-\@pnumwidth
    #5\leavevmode\hskip-\@tempdima
      \ifcase #1
       \or\or \hskip 1em \or \hskip 2em \else \hskip 3em \fi%
      #6\nobreak\relax
    \hfill\hbox to\@pnumwidth{\@tocpagenum{#7}}\par
    \nobreak
    \endgroup
  \fi}
\makeatother
\medmuskip=1\medmuskip
\def\annu#1{_{%
  \vbox{\hrule height .2pt 
    \kern 1pt 
    \hbox{$\scriptstyle {#1}\kern 1pt$}%
  }\kern-.05pt 
  \vrule width .2pt 
}}

\allowdisplaybreaks


\title[Spectral gaps in infinite dimension]{Spectral gap inequalities on nilpotent Lie groups in infinite dimensions}

\author{Esther Bou Dagher}
\address{Department of Mathematics, Imperial College London, 180 Queen’s
Gate, London, SW7 2AZ, United Kingdom}
\email{esther.bou-dagher17@imperial.ac.uk}

\author{Yaozhong Qiu}
\address{Department of Mathematics, Imperial College London, 180 Queen’s
Gate, London, SW7 2AZ, United Kingdom}
\email{y.qiu20@imperial.ac.uk}

\author{Boguslaw Zegarlinski}
\address{Department of Mathematics, Imperial College London, 180 Queen’s
Gate, London, SW7 2AZ, United Kingdom}
\email{bgslw2zeg2022@gmx.fr}

\author{Mengchun Zhang}
\address{Department of Mathematics, Imperial College London, 180 Queen’s
Gate, London, SW7 2AZ, United Kingdom}
\email{mengchun.zhang15@imperial.ac.uk}

\linespread{1.1}
\allowdisplaybreaks


\begin{document}

\begin{abstract}
We develop a general framework for spectral gap inequalities for Gibbs measures on infinite dimensional spin spaces over nilpotent Lie groups in terms of weak $U$-bounds and weak single-site spectral gap inequalities. We then provide sufficient conditions on the local specification and give examples of measures constructed using the Kaplan norm and generalising a few results for the Carnot-Carath\'eodory distance on the Heisenberg group.
\end{abstract}

\maketitle

\tableofcontents


\section{Introduction}
In this paper we provide a framework for the study of spectral gap inequalities in infinite dimensions that is well-adapted to the case where the ambient space is a nilpotent Lie group $\bbG$, and in particular when $\bbG = \bbH$ is the Heisenberg group. What we mean by a spectral gap inequality includes the Poincar\'e inequality
\begin{equation}\label{intro:spectralgap}
\int_X \abs{f - \int_X f}^2 d\mu \leq C\int_X \abs{\nabla f}^2d\mu
\end{equation}
for $(X, \Sigma, \mu)$ a smooth probability measure space, $\nabla$ a smooth (sub-)gradient, and with a constant $C > 0$ independent of $f$ belonging to a space with enough regularity conditions to ensure the right hand side makes sense. This inequality appears in the theory of Markov semigroups which play an important role in the study of Markov processes and stochastic differential equations. The infinite-dimensional part of our analysis enters in that our underlying space $X$ shall be given as (a suitable subspace of) the infinite product space $\Omega \vcentcolon= \smash{\mathbb{G}^{\bbZ^D}}$ for $\bbZ^D$ the $D$-dimensional integer lattice. 

In the study of Markov semigroups, the spectral gap inequality can be considered in abstract form using the language of generators, carr\'e du champs, and reversible measures; we refer the reader to the works \cite{GuZe03, Bak04, Wan06, BG10, BGL14} for more details. It belongs to the class of so-called coercive functional inequalities and it would be hopeless to give an exhaustive review of all such functional inequalities and their applications. Even within the realm of spectral gap inequalities we may already consider generalisations of \eqref{intro:spectralgap}. For instance, \cite{Wan00} studied a version of \eqref{intro:spectralgap} in which $C > 0$ can be made arbitrarily small at the cost of introducing a possibly arbitrarily large $\bbL^1$-norm, and \cite{RW01} studied the reversed idea of this in which $C$ is made large and a $2$-homogeneous functional of $f$ is made small. We also remark, since it will be of significant interest in the sequel, that \cite{BoZe05} explored a generalisation of \eqref{intro:spectralgap} in which $2$ is replaced with a $q \in (1, 2]$. 

We are interested in spectral gap inequalities for spin systems, which appear naturally in the study of physical phenomena. For instance, one of the earliest models introduced to study ferromagnetism was the Ising model \cite{Isi25}, where the spin space was the product of the two-point space $\{-1, +1\}$ representing the negative and positive states of an elementary magnet respectively. From a mathematical viewpoint, the notion of spin is quite general and indeed in this paper the spin of a particle is regarded as a point in $\bbG$. 
The literature on spectral gap inequalities (and also logarithmic Sobolev inequalities) for spin systems is extensive. At least for unbounded spin systems, a good starting point may be the works of \cite{SZ92a, Zeg96, BoHe99, Yos99, GeRo01, Led01} and references within. Our work can be regarded as being at the intersection of works like \cite{Led01, BB05, OtRe07} which studied the general problem of when a finite-dimensional (that is, on a single spin space) spectral gap or logarithmic Sobolev inequality can be promoted to the entire spin space, and the paper \cite{HeZe09} which studied the problem of finding such finite-dimensional inequalities on the Heisenberg group and more generally on nilpotent Lie groups. The Heisenberg group is one of the simplest subelliptic settings for which the rich theory of Bakry-\'Emery calculus \cite{BE85} fails, but \cite{HeZe09} developed a new technology, called a $U$-bound, to prove analogues of known results in the Euclidean setting. (For some recent extensions and applications see \cite{DaZe21a, DaZe21b, DaZe22}, \cite{CFZ20, CFZ22}.)

Our work is partly inspired by \cite{InPa09, InPa14} which combined the two approaches to obtain an infinite-dimensional inequality for the Heisenberg group with interactions dependent on Carnot-Carath\'eodory distance. Here we have a similar objective: we wish to start with the $U$-bounds of \cite{HeZe09} to furnish a finite-dimensional inequality on a single spin space and then analyse the conditions under which passage to the infinite-dimensional inequality is possible. In our endeavour we want to develop a general framework for coercive inequalities for Gibbs measures living on a subspace of an infinite product space which allows the inclusion of more examples than those previously existing in the literature, and in particular go beyond the case of an infinite product space over the Heisenberg group.

In our setup, for a given natural sub-gradient operator whose components generate the entire Lie algebra, one can consider single spin interactions dependent on various metrics. The intricacy of that situation is that dependent on the metric one can have essentially different theory as far as coercive inequalities are concerned. For example, as indicated already in \cite{HeZe09}, if one considers the Carnot-Carath\'eodory distance on the Heisenberg group with a suitable choice of interaction energy then one can prove a logarithmic Sobolev inequality, but if the distance function is smooth outside the origin then the logarithmic Sobolev inequality fails. On the other hand, in the latter case, as demonstrated in \cite{Ing10}, one can do some interesting analysis and prove that the spectral gap inequality persists also for interactions involving the Kaplan norm on the Heisenberg group.

Organisation of the paper is as follows. In Section 2 we introduce spectral gap inequalities and the infinite-dimensional setup. In Section 3 we present a general framework to prove the spectral gap inequalities for Gibbs measures by introducing a weak form of $U$-bound which is suitable for infinite-dimensional theory. In Section 4 we provide a convenient sufficient condition on the interaction to prove the weak $U$-bound. This condition provides us also with a nice way to generalise Dobrushin's compactness to our situation. In Section 5 we examine in detail examples of infinite-dimensional theory on the Heisenberg group, and in particular we provide here examples when the interaction depends on the Kaplan norm. In Section 6 we discuss possible generalisations to other Carnot groups. For more details on nilpotent Lie groups we refer the reader to \cite{BLU07}.

\section{Framework}

\subsection{The spectral gap inequality}
On a smooth probability space $(\Omega, \Sigma, \nu)$ with a smooth sub-gradient $\nabla$, given  $\Gamma^q(f)\equiv |\nabla f|^q$, for $q \in (1, 2]$, one can study a functional inequality of the form
\begin{equation}\label{funceq}
\int_\Omega \abs{f - \int_\Omega f d\nu}^q d\nu \leq C \int_\Omega \Gamma^q(f) d\nu 
\end{equation}
where $C \in (0, \infty)$ does not depend on $f$. Such an inequality is called in the literature the \emph{$q$-spectral gap inequality} or $q$-Poincar\'e inequality (and will be denoted later by $q$-SGI). Its name is derived from the fact that when $q = 2$, the Dirichlet form associated to $\Gamma^2(f) \equiv \abs{\nabla f}^2$ together with the measure $\nu$ uniquely define a self-adjoint operator $L$ on $\bbL^2(\nu)$ which, provided \eqref{funceq} holds, has a spectrum satisfying the containment $\sigma(-L) \sbs \{0\} \cup [1/C, \infty)$, i.e. $-L$ has a spectral gap between $0$ and $1/C$. In particular, if the space is a nilpotent Lie group $\Omega = \bbG$ with Lie algebra $\cG$, there exists a (canonical) collection of $1 \leq K \leq \dim \cG$ vector fields forming the canonical subgradient $\nabla_\bbG = (X^{(1)}, \cdots, X^{(K)})$. With $\Gamma^2(f) = \abs{\nabla_\bbG f}^2$ as the quadratic form and $\Delta_\bbG = \nabla_\bbG \cdot \nabla_\bbG$ as the canonical sub-Laplacian, we can consider the probability measure 
\[ d\nu = \frac{1}{Z}e^{-U}dx \]
defined with a measurable function $U$ such that $0 < Z = \int_\bbG e^{-U}dx < \infty$ and $dx$ denoting the Lebesgue measure on $\bbG$ inherited from the stratification of $\cG$. Then the operator 
\[ L = \Delta_\bbG - \nabla_\bbG U \cdot \nabla_\bbG \]
satisfies 
\[ \inner{f, -Lf}_{\nu} = \int_\bbG \abs{\nabla_\bbG f}^2d\nu, \]
that is $L$ is the self-adjoint operator associated to $\Gamma^2$ and $\nu$. The Euclidean case is entirely analogous and replaces $\nabla_\bbG$ for $\nabla$, and $\Delta_\bbG$ for $\Delta$. 

The $q$-spectral gap inequality for $q \in (1, 2)$ improves the $2$-spectral gap inequality, in the sense if the underlying space is finite-dimensional, the former implies the latter \cite[Proposition~2.3]{BoZe05}. Moreover, the $q$-spectral gap inequality for $q \in (1, 2]$ enjoys two important properties: it is stable under both tensorisation, meaning if $\nu_1$ and $\nu_2$ both satisfy the $q$-spectral gap inequality on $\Omega$, then so too does their tensor product $\nu_1 \otimes \nu_2$ on $\Omega \times \Omega$, and it is also stable with respect to bounded perturbations, in that if $V$ is bounded above and below then $\smash{d\tilde{\nu} = \frac{1}{\tilde{Z}}e^{-U-V}dx}$ satisfies the inequality also.

The $q$-spectral gap inequality also plays an important role in the study of the (heat) semigroup $(e^{tL})_{t \geq 0}$ generated by $L$. Namely, it implies exponentially fast convergence $e^{tL}f \rightarrow \int_\Omega fd\nu$ to equilibrium in $\bbL^2(\nu)$. This inequality belongs to a wider class of functional inequalities called the coercive inequalities which derive their namesake from the fact their existence ``coerce'' particular behaviours of semigroups. This class also contains the $q$-logarithmic Sobolev inequalities and the classical Sobolev inequality which, respectively, are related to hypercontractivity and ultracontractivity of the semigroup. We refer the reader to the works \cite{GuZe03, Wan06, BGL14} for more details on coercive inequalities, and to the paper \cite{BoZe05} which first studied the $q$-spectral gap inequality. 

The goal of this paper is to study the spectral gap inequality in the infinite-dimensional setting as well as consider interactions defined in terms of homogeneous norms on $\bbG$ other than the Carnot-Carath\'eodory distance $d$, which was studied in the infinite-dimensional setting in \cite{InPa09}. We will be especially interested in the Kaplan norm $N$ which was proven in \cite[Theorem~4.5.5]{Ing10} to satisfy a finite-dimensional $q$-spectral gap inequality. 
\begin{theorem}\label{thm:ingliskaplan}
Let $q \in (1, 2]$. For $N$ the Kaplan norm on the Heisenberg group $\bbH$, $p = q/(q-1)$ the H\"older conjugate to $q$, and $\alpha \in \bbR_{>0}$, the measure
\[ d\nu_p = \frac{1}{Z}e^{-\alpha N^p}dx, \] 
satisfies the $q$-spectral gap inequality. 
\end{theorem}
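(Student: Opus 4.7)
The plan is to apply the $U$-bound strategy of \cite{HeZe09}, adapted to the fact that the Kaplan norm $N(z,t) = (|z|^4 + 16 t^2)^{1/4}$ is smooth away from the origin. Setting $U = \alpha N^p$ so that $d\nu_p = Z^{-1} e^{-U} dx$, the pointwise computations on $\bbH \setminus \{0\}$ give $\nabla_\bbH U = \alpha p N^{p-1} \nabla_\bbH N$ and an analogous formula for $\Delta_\bbH U$ of size $N^{p-2}|\nabla_\bbH N|^2$. A direct calculation yields $|\nabla_\bbH N|^2 = |z|^2/N^2$, bounded by $1$ but degenerating along the centre $\{z = 0\}$ of $\bbH$. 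The H\"older relation $(p-1)q = p$ coming from $p = q/(q-1)$ then produces the clean identity $|\nabla_\bbH U|^q = (\alpha p)^q N^p |\nabla_\bbH N|^q$.

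The main step is to establish a $U$-bound of the form
\[ \int_\bbH |f|^q (1 + N^p)\, d\nu_p \leq A \int_\bbH |\nabla_\bbH f|^q\, d\nu_p + B \int_\bbH |f|^q\, d\nu_p, \]
which I would derive by starting from $|\nabla_\bbH U|^q = |\nabla_\bbH U|^{q-2} \nabla_\bbH U \cdot \nabla_\bbH U$, testing against $|f|^q\, d\nu_p$, and integrating by parts against $e^{-U}$. The derivative falling on $e^{-U}$ rebuilds the left-hand side; the derivative on $|\nabla_\bbH U|^{q-2} \nabla_\bbH U$ produces the error $|f|^q |\Delta_\bbH U|$, which is $O(N^{p-2}|\nabla_\bbH N|^2)$ and hence absorbable into $\varepsilon N^p|\nabla_\bbH N|^q$ plus a constant using $|\nabla_\bbH N|^2 \leq |\nabla_\bbH N|^q$ for $q \leq 2$; and the cross term $|\nabla_\bbH U|^{q-1}|\nabla_\bbH|f|^q|$ is handled via Young's inequality $|ab| \leq \varepsilon|a|^q + C_\varepsilon|b|^p$ to produce the required $\varepsilon \int |\nabla_\bbH f|^q\, d\nu_p$.

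Once the $U$-bound is obtained, I would combine it with a local spectral gap inequality on the compact sublevel set $\{N \leq R\}$, on which $\nu_p$ has smooth density bounded above and below and a standard sub-elliptic Poincar\'e inequality applies; on the complement $\{N > R\}$ the weight $1 + N^p \geq R^p$ is large and the $U$-bound dominates. A cutoff/partition-of-unity argument then yields the global $q$-SGI for $\nu_p$, with the centering by $\int f\, d\nu_p$ justified by the stability of the $q$-SGI under bounded additive constants recalled earlier in the paper.

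The main obstacle I anticipate is that the natural weight produced by the integration by parts is $|\nabla_\bbH U|^q = (\alpha p)^q N^p |\nabla_\bbH N|^q$, not the stronger $1 + N^p$, and $|\nabla_\bbH N|$ vanishes along the non-compact $t$-axis. Upgrading the weight from $N^p|\nabla_\bbH N|^q$ to $1 + N^p$ requires an auxiliary ingredient — for instance exploiting the hypoellipticity of $\Delta_\bbH$ so that control on $|\nabla_\bbH f|^q$ yields control on the $t$-derivative of $f$ via a commutator, or introducing a secondary test function adapted to a cylindrical neighbourhood of the centre — and it is this point that separates the Kaplan norm case from the Carnot-Carath\'eodory case treated in \cite{HeZe09, InPa09}, where $|\nabla_\bbH d| = 1$ a.e.\ removes the difficulty altogether.
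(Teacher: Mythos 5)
Your proposal cannot be compared against the paper's own proof because the paper does not prove Theorem~\ref{thm:ingliskaplan}: it is imported wholesale from \cite[Theorem~4.5.5]{Ing10} and treated as a black box. What the paper \emph{does} say about that external proof (Section~5.3) is precisely relevant to the gap in your attempt: Inglis's argument rests on the \emph{degenerate} $U$-bound $\eta(h) = N^{p-2}\abs{h}^2$, where $\abs{h}$ is the horizontal norm, and this $\eta$ vanishes identically along the centre of $\bbH$. That is exactly the weight your integration-by-parts scheme produces naturally — since $\abs{\nabla_\bbH N} = \abs{h}/N$, the quantity $N^p \abs{\nabla_\bbH N}^q$ equals $N^{p-q}\abs{h}^q$, which at $q=2$ is the quoted $\eta$. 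In other words, the actual proof does \emph{not} upgrade the weight to $1 + N^p$; it accepts the degenerate $\eta$ and establishes the $q$-SGI by an argument adapted to that degeneracy.

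The step you flag as the ``main obstacle'' is therefore a genuine gap, and your proposed auxiliary ingredients do not fill it. The commutator idea (extracting $\partial_t f$ from $[X^1, X^2]$) is a second-order device — the spectral gap hypothesis controls $\abs{\nabla_\bbH f}^q$, not $\abs{\Delta_\bbH f}$ or anything quadratic — so it does not directly repair a first-order $U$-bound; and the ``secondary test function adapted to a cylindrical neighbourhood of the centre'' is a placeholder rather than an argument. Your subsequent sub-level-set decomposition also does not close with the weight you can actually prove: you need $\{\eta \le R\}$ to be compact (or at least of small measure) for the partition-of-unity step, but $N^{p-q}\abs{h}^q$ vanishes on the entire non-compact centre, so the ``large weight'' region omits a cylindrical neighbourhood of $\{\abs{h}=0\}$. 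You have correctly diagnosed the feature separating $N$ from the Carnot--Carath\'eodory distance $d$, but the proposal names the difficulty without resolving it, and the resolution is not the weight upgrade you aim for.
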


This inequality is also satisfied by $d$ replacing $N$, and in fact the measure $\frac{1}{Z}e^{-\alpha d^p}dx$ satisfies the stronger $q$-logarithmic Sobolev inequality \cite[Corollary~4.1]{HeZe09}. However as shown in \cite[Theorem~6.3]{HeZe09} a probability defined with any homogeneous norm that is smooth everywhere except at zero, and in particular $N$, can never satisfy the logarithmic Sobolev inequality. Thus  the $q$-spectral gap inequality of Theorem \ref{thm:ingliskaplan} is the next best result that one could hope for. The main point is that although both $d$ and $N$ are both natural candidates as choice of metric, and indeed are topologically equivalent, they nonetheless exhibit very different analytic behaviour from the viewpoint of coercive inequalities. We refer the reader to \cite[Chapter~5]{BLU07} for definitions and properties of $d$, $N$, and homogeneous norms on nilpotent Lie groups in general. 

In the sequel, for $q \in (1, 2]$ a fixed constant we shall reserve the notation $p = q/(q - 1)$ for its H\"older conjugate. Moreover, unless otherwise stated, $\nabla \vcentcolon= \nabla_\bbG$, i.e. we always work with the sub-gradient on $\bbG$ and not the Euclidean gradient. (In general, $\nabla$ can be any nice enough sub-gradient.) In addition, where convenient we adopt the shorthand notation $\nu(f) = \int_\Omega f d\nu$.

\subsection{The infinite-dimensional setting}
We will be interested in functional inequalities of the form \eqref{funceq} where the configuration space $\smash{\Omega = \bbG^{\bbZ^D}}$ is the product of infinitely many copies of a nilpotent Lie group $\bbG$ indexed by the $D$-dimensional integer lattice $\bbZ^D$.   From the point of view of statistical mechanics, each lattice point $i \in \bbZ^D$ represents a particle with unbounded spin taking values in $\bbG$, while a point $\omega \in \Omega$ represents a spin configuration, that is a description of the spin of every particle in the system. 

The measures $\nu$ on $\Omega$ that we consider are constructed in the following way.  
First of all we remark that given a measurable function $f$ on $(\Omega, \Sigma)$ and a subset $\Lambda \sbs \bbZ^D$ we can define a jointly measurable function
$f:\bbG^\Lambda\times\bbG^{\Lambda^c}\to\bbR$ as
\begin{equation}\label{def:conditionalfunction}
f_\Lambda^\omega(x_\Lambda) := f(x_\Lambda \bullet \omega_{\Lambda^c})
\end{equation}
where 
\[ (x_\Lambda \bullet \omega_{\Lambda^c})_j \vcentcolon= \begin{cases}x_j, &\Mfor \, j\in \Lambda\\
\omega_j, &\Mfor \, j\in \Lambda^c.
\end{cases}\]
We define a potential energy  $U_\Lambda^\omega(x_\Lambda)$ for a subsystem in a finite set $\Lambda \Subset \bbZ^D$ given external configuration $\omega \in \Omega$ as follows
\begin{equation}\label{def:potential}
U_\Lambda^\omega(x_\Lambda) \vcentcolon= U_\Lambda(x_\Lambda \bullet \omega_{\Lambda^c}) = \sum_{i \in \Lambda} \phi(x_i) + \sum_{i, j \in \Lambda} \beta_{ij} V(x_i, x_j) + \sum_{i \in \Lambda, \, j \notin \Lambda} \beta_{ij} V(x_i, \omega_j)
\end{equation}
where $\phi \in C(\bbG, \bbR)$, $V \in C^1(\bbG \times \bbG, \bbR)$, and $\lvert\beta_{ij}\rvert < \beta$ for some $\beta > 0$ and each pair $i, j \in \bbZ^D$. That is, each site $i \in \Lambda$ contributes both a phase part $\phi(x_i)$ and an interaction part $\sum_{j \in \bbZ^D} V(x_i, x_j)$, where $x_j = \omega_j$ whenever $j \notin \Lambda$. 
If necessary we restrict ourselves to a subspace $\mathcal{S}\subset \Omega$ such that
\[ \forall i\in\bbZ^D \Mand \forall \omega\in\mathcal{S}, \quad  \left|\sum_{\substack{i \in \Lambda, \, j \notin \Lambda}} \beta_{ij}V(x_i, \omega_j) \right|<\infty,
\]
and 
\[0< Z_\Lambda^\omega \vcentcolon= \int e^{-U(x_\Lambda \bullet \omega_{\Lambda^c})}dx_\Lambda <\infty\]
where $dx_\Lambda$ denotes the Lebesgue measure in $\bbG^\Lambda$. (If the interaction has finite range $R\in(0,\infty)$, meaning $\beta_{ij} \equiv 0$ if $\norm{i - j}\vcentcolon=\norm{i - j}_{l^1(\bbZ^D)} > R$, 
then both conditions are satisfied on the entirety of $\Omega$.)
Then we can define a probability measure on $\mathcal{S}\subset \Omega$ by 
\[ d\bbE_\Lambda^\omega  = \delta_{\omega_{\Lambda^c}}\otimes \frac{1}{Z_\Lambda^\omega}e^{-U_\Lambda^\omega}dx_\Lambda \]
where, for given $\omega \in\Omega$, $\delta_{\omega_{\Lambda^c}}$ denotes the point measure on $\bbG^{\Lambda^c}$ concentrated at $\omega_{\Lambda^c}\equiv (\omega_j, j\in\Lambda^c)$. The measure $\bbE_\Lambda^\omega$ restricted to the sigma algebra of Lebesgue measurable subsets of $\bbG^\Lambda$ is called a \emph{local Gibbs measure} (with external condition $\omega\in\mathcal{S}$).
We denote the integral of a function $f: \mathcal{S} \rightarrow \bbR$ against the measure $d\bbE_\Lambda^\omega$ by 
\begin{equation}\label{def:conditionalexpectation}
\bbE_\Lambda^\omega f = \int_{\mathcal{S}} fd\bbE_\Lambda^\omega= \int_{\bbG^\Lambda} \frac{1}{Z_\Lambda^\omega} f(x_\Lambda\bullet\omega_{\Lambda^C}) e^{-U_\Lambda(x_\Lambda\bullet\omega_{\Lambda^c}) }dx_\Lambda
\end{equation}
In this way $\bbE_\Lambda^\omega$ can be regarded as a linear functional acting on bounded measurable functions, and it may be promoted to a linear operator $\bbE_\Lambda$ whose action on $f$ is given as $\bbE_\Lambda f(\omega) = \bbE_\Lambda^\omega f$. The family $(\bbE_\Lambda^\omega)_{\Lambda \sbs \bbZ^D}^{\omega\in\mathcal{S}}$ is called a \emph{local specification}.
It preserves the unit function, positivity and satisfies the compatibility condition
\[ \forall \Lambda_1\subset \Lambda_2 \ \forall \omega \in \cS \implies \bbE_{\Lambda_2}^\omega \bbE_{\Lambda_1}^\cdot f = \bbE_{\Lambda_2}^\omega f.\]
A probability measure $\nu$ on $\mathcal{S}$ is called a \emph{global Gibbs measure} for the local specification $(\bbE_\Lambda^\omega)_{\Lambda \Subset \bbZ^D;\,\omega \in \cS}$ if it satisfies the DLR equation 
\[\nu \bbE_\Lambda^\cdot(f) = \nu (f)\] for all $\Lambda \Subset \bbZ^D$ and all integrable functions. This construction, which follows the work of Dobrushin \cite{Dob68a}, Lanford, and Ruelle \cite{LaRu69}, means that the measure $\nu$ is constructed by first providing a description of the behaviour of $\nu$ on finite volume subsystems $\Lambda \Subset \bbZ^D$. The existence of such a measure is not immediately obvious, but Dobrushin gave a criterion \cite[Theorem~1]{Dob70} for spin systems on arbitrary metric spaces which we will use in the sequel to show that the local specifications of interest are well-defined and possess a Gibbs measure $\nu$. Moreover, the proof of the main theorem will go on to show that $\nu$ is unique, which is itself a question of independent interest and related to the physical phenomenon of phase transition \cite{Geo11}. 

We can also now be more precise about the functions $f$ appearing in \eqref{funceq}: they belong to the space $W^{1,q}(\nu)\subset \bbL^q(\nu)$  
of functions such that
\[ \nu \abs{\nabla_\Lambda f}^q \vcentcolon= \nu \sum_{i \in \Lambda} \abs{\nabla_if}^q <\infty\]
for all subsets $\Lambda \subset \bbZ^D$, possibly infinite (and in particular possibly all of $\bbZ^D$ itself), and where $\smash{\nabla_if(\omega) \vcentcolon= \nabla_{\{i\}}f_{\{i\}}^\omega(\omega_i)}$ and $\abs{\nabla_if}^q$ is the $q$-th power of the $l^2(\bbZ^D)$-norm of $\nabla_if$. 
\vspace{0.25cm}

In this paper, we concern ourselves with finding conditions on the phase $\phi$ and interaction $V$ so that the Gibbs measure $\nu$ exists, is unique, and satisfies the $q$-SGI \eqref{funceq}. Afterwards we provide applications of our scheme. For that we choose $\bbG$ to be the Heisenberg group and for the sake of notational simplicity, we consider the case of interactions which are of finite range $R = 1$, meaning $\beta_{ij} \equiv 0$ whenever $\norm{i-j}  > 1$.
Already in this setup we obtain a number of generalisations of a few results previously existing in the literature. In particular we provide examples of interactions dependent on a smooth homogeneous norm for which there are no prior results on spectral gap inequalities in the literature for the Gibbs measures of infinite-dimensional systems. The general case of interactions of finite range $1 < R < \infty$ is discussed briefly at the end of the following chapter, and the case of other nilpotent Lie groups is discussed in Section 6. 

\section{Passage from single-site inequalities to global inequalities}

In proving inequalities for the global Gibbs measure $\nu$, one may pass from the single-site $q$-SGI to establishing the $q$-SGI holds for every local Gibbs measure $\bbE_\Lambda^\omega$ uniformly in both the finite volume subsystem $\Lambda$ and the boundary conditions $\omega$. In the past this was done, for instance, in \cite[Theorem~4.1]{Zeg96} for the logarithmic Sobolev inequality and in \cite[Theorem~1.1]{GeRo01} for the $2$-SGI on the single spin space $\bbR$. 
In the event the $q$-SGI cannot be obtained with constant independent of $\Lambda$ and $\omega$, it may be possible to work with the weaker uniform single-site inequality where $\Lambda = \{i\}$ has unit volume. On the single spin space $\bbH$, the work of \cite[Theorem~5.2]{InPa09} and \cite[Theorem~2.1]{InPa14} establishes the inequality for the Gibbs measure in this way for measures defined using the Carnot-Carath\'eodory distance. 

In this chapter, we show that one can get away with less, that is we show that it suffices that a so-called weak $U$-bound and a weak $q$-SGI holds. $U$-bounds were first studied in the context of nilpotent Lie groups in \cite{HeZe09} where they were used to establish conditions under which a $q$-SGI and other coercive inequalities hold for measures on a finite-dimensional space. The one presented here is weak in the sense the right hand side contains defective terms involving the sub-gradients of $f$ at other sites. It turns out however that this is sufficient, provided that the defective terms decay sufficiently fast as one moves further and further away from the given site $i \in \bbZ^D$. (A strong $U$-bound analogous to \bf{(A1)} would contain only the sub-gradient of $f$ at site $i$, see for instance \cite[Lemma~4.1]{InPa09}.) Similarly, the $q$-SGI here is weak in the sense, in contrast to \eqref{funceq}, the right hand side contains more than just $\nu\abs{\nabla_if}^q$ and instead contains defective terms. 

\begin{theorem}\label{thm:singletoglobal}
Let $q \in (1, 2]$, and let $\nu$ be a Gibbs measure for the local specification $(\bbE_\Lambda^\omega)_{\Lambda \Subset \bbZ^D}^{\omega \in \mathcal{S}}$ given in \eqref{def:potential}. Let $f \in W^{1,q}(\nu)$ and suppose the following two conditions hold for $\beta > 0$ sufficiently small: 
\begin{enumerate}[label=\normalfont{\bf{(A\arabic*)}}]
\item for any $i \in \bbZ^D$ and $j \in \bbZ^D$ with $\norm{j - i}  = 1$, one has the weak single-site $U$-bound
\[ \nu\left(\abs{f}^q\abs{\nabla_jV(x_i, x_j)}^q\right) \leq A\nu\abs{f}^q + A\sum_{k \in \bbZ^D} M_{ki} \nu\abs{\nabla_k f}^q, \]
\item and, for any $i \in \bbZ^D$, one has the weak single-site $q$-SGI 
\[ \nu\bbE_i^\omega \abs{f - \bbE_i^\omega f}^q \leq A \sum_{k \in \bbZ^D} M_{ki} \nu\abs{\nabla_k f}^q \]
\end{enumerate}
for some constant $A > 0$ and some symmetric matrix $(M_{ij}\geq 0)_{i, j \in \bbZ^D}$ such that 
\begin{enumerate}[label=\normalfont{\bf{(B\arabic*)}}]
\item $A = \cO(\beta^{-1/2})$ as $\beta \rightarrow 0^+$, and 
\item $M_{ij} \geq 0$ for all $i, j \in \bbZ^D$ and satisfies $\sup_{i \in \bbZ^D} \sum_{j \in \bbZ^D} M_{ij} < \infty$. 
\end{enumerate}
Then for all $\beta > 0$ sufficiently small the following global $q$-SGI holds
\[ \nu\abs{f - \nu f}^q \leq C_{SG}\nu\abs{\nabla f}^q \]
 with some constant $C_{SG} > 0$ possibly depending on $\beta$,
 for all $f \in W^{1,q}(\nu)$. 
\end{theorem}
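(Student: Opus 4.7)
The plan is to iterate the single-site inequalities via a sweeping construction, in the spirit of the martingale-style methods previously used in \cite{Zeg96, GeRo01}, reducing control of $\nu|f-\nu f|^q$ to bounds on gradients of repeated conditional expectations. Fix an enumeration $\bbZ^D = \{i_1, i_2, \ldots\}$ and set $\Lambda_n = \{i_1, \ldots, i_n\}$, $F_n = \bbE_{\Lambda_n}^\cdot f$, with $F_0 = f$. By compatibility of the local specification, $F_n = \bbE_{i_n}^\cdot F_{n-1}$; by the DLR equation, $\nu(F_n) = \nu(f)$; and under the smallness of $\beta$ (via Dobrushin's criterion, invoked in Section~4), $F_n \to \nu(f)$ in $L^q(\nu)$. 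Minkowski's inequality then gives
\begin{equation*}
\bigl(\nu|f - \nu f|^q\bigr)^{1/q} \leq \sum_{n \geq 1} \bigl(\nu|F_{n-1} - F_n|^q\bigr)^{1/q}.
\end{equation*}

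Each increment is handled by the weak single-site $q$-SGI \textbf{(A2)} applied to $F_{n-1}$, giving $\nu|F_{n-1} - F_n|^q \leq A\sum_k M_{k,i_n}\nu|\nabla_k F_{n-1}|^q$. Since $F_{n-1}$ is $\bbE_{\Lambda_{n-1}}^\cdot$-measurable, $\nabla_k F_{n-1} = 0$ for $k \in \Lambda_{n-1}$, so only the remaining sites contribute. For $k \notin \Lambda_{n-1}$ I would invoke the commutator identity
\begin{equation*}
\nabla_k \bbE_\Lambda^\omega g = \bbE_\Lambda^\omega[\nabla_k g] - \mathrm{Cov}_\Lambda^\omega(g, \nabla_k U_\Lambda^\omega),
\end{equation*}
derived by differentiating the local Gibbs density under the integral. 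Because the interaction has range $R = 1$, $\nabla_k U_\Lambda^\omega$ is a $\beta$-weighted sum of $\nabla_k V(x_j, \omega_k)$ over neighbours $j \in \Lambda$ with $\|j-k\|=1$. Bounding the covariance by Jensen's inequality and then invoking the weak $U$-bound \textbf{(A1)} on the resulting $\nu|\cdot|^q |\nabla_k V|^q$ pieces---together with one more application of \textbf{(A2)} to absorb the $\nu|g - \bbE g|^q$ factor---collapses the covariance contribution into a weighted sum of $\nu|\nabla_\ell f|^q$'s with coefficients of order $\beta^q A^2$ multiplied by entries of $M$ and its convolutions.

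Combining these pieces yields a bound of the schematic form
\begin{equation*}
\bigl(\nu|f - \nu f|^q\bigr)^{1/q} \leq \sum_{n \geq 1} \Bigl(\sum_{k \notin \Lambda_{n-1}} A\, M_{k, i_n}\bigl[\nu|\nabla_k f|^q + C\beta^q A^2 \sum_\ell \widetilde M_{\ell, k}\, \nu|\nabla_\ell f|^q\bigr]\Bigr)^{1/q},
\end{equation*}
in which $\widetilde M$ still satisfies the row-sum bound of \textbf{(B2)}. The balance \textbf{(B1)} gives $\beta^q A^2 = \cO(\beta^{q-1})$, small for $q > 1$, so the defective cross-terms are subleading as $\beta \to 0$; combined with the finite range of $M$ around each $i_n$ and Fubini's theorem, the double sum reorganises into an expression of the form $C_{SG}\sum_\ell \nu|\nabla_\ell f|^q = C_{SG}\,\nu|\nabla f|^q$ with $C_{SG}$ finite for $\beta$ sufficiently small.

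The principal difficulty, as I anticipate, lies in making the third step quantitative: the defective recursion for $\nu|\nabla_k F_{n-1}|^q$ introduces fresh cross-terms at every sweep, and the bookkeeping must be precise enough to guarantee that---after reorganising the double sum in $n$ and $k$---each gradient term $\nu|\nabla_\ell f|^q$ is loaded with a coefficient bounded uniformly as $\beta \to 0^+$. Only the smallness of $\beta$, acting as the sole contraction parameter, balances the $A = \cO(\beta^{-1/2})$ blowup from \textbf{(B1)} and keeps the final constant finite. A secondary concern is the in-principle convergence $F_n \to \nu(f)$, which rests on uniqueness of $\nu$ and is itself ensured only in the small-$\beta$ regime.
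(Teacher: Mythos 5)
Two genuine gaps prevent the proposal from going through.

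First, the Minkowski step is the wrong decomposition. Writing $\bigl(\nu\abs{f-\nu f}^q\bigr)^{1/q}\leq\sum_{n\geq 1}\bigl(\nu\abs{F_{n-1}-F_n}^q\bigr)^{1/q}$ is correct, but after you apply \textbf{(A2)} to each increment and then try to ``reorganise the double sum by Fubini,'' you are confronted with $\sum_n\bigl(\sum_k M_{k,i_n}\nu\abs{\nabla_kf}^q\bigr)^{1/q}$, which is \emph{not} controlled by $\bigl(\nu\abs{\nabla f}^q\bigr)^{1/q}$: concavity of $t\mapsto t^{1/q}$ pushes the inequality in the wrong direction, and the sum of $q$-th roots is generically infinite even when the sum of the $q$-th powers converges (e.g.\ $a_n=n^{-3/2}$ with $q=2$). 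The paper avoids this by telescoping the $q$-th powers directly and invoking the subquadratic inequality $\abs{a}^q-\abs{b}^q\leq q\,\mathrm{sgn}(b)\abs{b}^{q-1}(a-b)+2\abs{a-b}^q$, which gives $\bbE\abs{g}^q-\abs{\bbE g}^q\leq 2\,\bbE\abs{g-\bbE g}^q$ and hence a bound by a \emph{sum of $q$-th powers}, which then interacts correctly with the row-summability hypothesis \textbf{(B2)} and Fubini.

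Second, the ``defective recursion'' you flag at the end is not a bookkeeping annoyance but a structural obstruction that the single-enumeration scheme cannot overcome. Controlling $\nu\abs{\nabla_k F_{n-1}}^q$ requires iterating a single-site sweeping-out inequality of the form $\nu\abs{\nabla_j(\bbE_if)}^q\leq K_1\nu\abs{\nabla_jf}^q+c_1\sum_k M_{ki}\nu\abs{\nabla_kf}^q$ with $K_1>1$ (the covariance term forces $K_1\geq 2$); if $k$ interacts with an unbounded number of the sites $i_1,\dots,i_{n-1}$ swept so far, the leading coefficient accumulates an unbounded power of $K_1$ as $n\to\infty$, independently of how small $\beta$ is. The device that terminates this recursion in the paper is precisely the partition of $\bbZ^D$ into $N=2^D$ non-interacting sub-lattices $(\Gamma_n)$: within one sweep $\cP=\bbE_{\Gamma_{N-1}}\cdots\bbE_{\Gamma_0}$, each site $j$ interacts with at most the $\cO(1)$ points of a fixed cube $Q_j$, so only $\cO(1)$ many iterations of the sweeping-out inequality are needed, the constant is uniformly bounded, and then Lemma~\ref{lem:P}\textbf{(3)} gives geometric contraction $\nu\abs{\nabla(\cP^m f)}^q\leq\xi^m\nu\abs{\nabla f}^q$ with $\xi<1$ for small $\beta$. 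That geometric series---not Dobrushin smallness applied to a full enumeration---is what makes the final constant $C_{SG}$ finite. Your covariance/commutator computation is essentially the same as the paper's \eqref{gradientcalc1}--\eqref{gradientcalc2}, so that ingredient is fine; it is the two framing steps (telescoping the wrong quantity, and sweeping over $\bbZ^D$ site by site instead of over a finite non-interacting partition) that need to be replaced.
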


\begin{remark}
The $l^1(\bbZ^D)$-summability of $(M_{i,j})_{j \in \bbZ^D}$ of \bf{(B2)} implies $\norm{(M_{ij})}_{l^\infty} < \infty$. Without loss of generality we may assume $\norm{(M_{ij})}_{l^\infty} \leq 1$ by rescaling $M$ and absorbing the constant into $A$. 
\end{remark}

The basic idea is that the summability condition \bf{(B2)} implies that if the interaction matrix $M_{ij}$ decays sufficiently fast at infinity (in particular it suffices that it decays exponentially fast), then the defective terms can be dispensed of. The generic case is that $M_{ij}$ is a matrix which decays like $\epsilon^{\norm{i - j}}$ where $0<\epsilon < 1$ but which may be bounded away from zero in a finite neighbourhood of $i$. 

To prove this theorem, we first prove some lemmata. The first is a sweeping out inequality - these were introduced in \cite{GuZe03} in order to prove infinite-dimensional logarithmic Sobolev inequalities. Throughout the remainder of this chapter we fix $q \in (1, 2]$ and assume the conditions of Theorem \ref{thm:singletoglobal} 
hold. Moreover, we use the notation $j \sim i$ to mean $j$ is a neighbour of site $i$, i.e. $j$ is adjacent to $i$ in the sense $\norm{j - i}  = 1$, and we use the notation $\{\sim i\}$ to denote the set of neighbours of $i$, that is the $2D$ points $j \in \bbZ^D$ with $\norm{i-j}  = 1$. (Thus $j \sim i$ if and only if $j \in \{\sim i\}$.) 
Lastly, whenever we use Landau notation, we implicitly understand it  with respect to the limit $\beta \rightarrow 0^+$; we will need to track the decay (or growth) of some constants and we will typically reserve uppercase letters such as $K$ for large constants that are $\cO(1)$ and lowercase letters such as $c$ for small constants that are $o(1)$. 

\begin{lemma}\label{lem:sweepoutineq}
For all $i, j \in \bbZ^D$, the following single-site sweeping out inequality holds
\begin{equation}\label{sweepoutineq}
\nu\abs{\nabla_j(\bbE_i^\omega f)}^q \leq K_1\nu\abs{\nabla_jf}^q + c_1\sum_{k \in \bbZ^D} M_{ki} \nu\abs{\nabla_kf}^q
\end{equation}
with some constants $K_1 = \cO(1)$ and $c_1 = o(1)$ independent  of $i$ and $f$.
\end{lemma}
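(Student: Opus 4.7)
The plan is to differentiate under the integral sign in the definition of $\bbE_i^\omega f$, exposing a covariance structure that brings $\nabla_j V$ into play, and then to invoke \textbf{(A1)} and \textbf{(A2)} to control the resulting defective pieces. Since $\bbE_i^\omega f$ depends only on $\omega_{\{i\}^c}$, the case $j = i$ is trivial. For $j \neq i$ with $j \not\sim i$, the potential $U_i^\omega$ is independent of $\omega_j$, so differentiation gives $\nabla_j \bbE_i^\omega f = \bbE_i^\omega \nabla_j f$ and Jensen's inequality alone yields $\nu|\nabla_j \bbE_i^\omega f|^q \leq \nu|\nabla_j f|^q$. The only interesting case is therefore $j \sim i$.

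For $j \sim i$, differentiation under the integral produces
\[ \nabla_j \bbE_i^\omega f \;=\; \bbE_i^\omega \nabla_j f \;-\; \beta_{ij}\,\mathrm{Cov}_{\bbE_i^\omega}\!\bigl(f,\,\nabla_j V(x_i,\omega_j)\bigr). \]
I would dominate the covariance by $\bbE_i^\omega\bigl[|f - \bbE_i^\omega f|\,|\nabla_j V(x_i,\omega_j)|\bigr]$, apply $(a+b)^q \leq 2^{q-1}(a^q+b^q)$ followed by Jensen, and integrate against $\nu$, using the DLR identity $\nu \bbE_i = \nu$, to land at
\[ \nu|\nabla_j \bbE_i^\omega f|^q \;\leq\; 2^{q-1}\,\nu|\nabla_j f|^q \;+\; 2^{q-1}\beta^q\,\nu\!\left[|F|^q\,|\nabla_j V(\omega_i,\omega_j)|^q\right], \quad F \vcentcolon= f - \bbE_i f. \]

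The next step is to invoke \textbf{(A1)} applied to the function $F$, obtaining $\nu[|F|^q|\nabla_j V|^q] \leq A\,\nu|F|^q + A\sum_k M_{ki}\,\nu|\nabla_k F|^q$. The first term is handled by \textbf{(A2)}, since $\nu|F|^q = \nu\,\bbE_i^\omega|f - \bbE_i^\omega f|^q \leq A\sum_k M_{ki}\,\nu|\nabla_k f|^q$, and thus contributes a prefactor of order $A^2$. The second term expands via $|\nabla_k F|^q \leq 2^{q-1}(|\nabla_k f|^q + |\nabla_k \bbE_i f|^q)$, reintroducing the very quantity being estimated. This recursive coupling is the main obstacle.

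The recursion closes by a one-step bootstrap. Setting $a_k \vcentcolon= \nu|\nabla_k f|^q$, $b_k \vcentcolon= \nu|\nabla_k \bbE_i f|^q$, and $T_i \vcentcolon= \sum_k M_{ki}\,b_k$, I would sum the pointwise estimate against $M_{ji}$ in $j$, using symmetry of $M$ and the normalisation $\sup_i\sum_j M_{ij} \leq 1$ afforded by \textbf{(B2)}, to arrive at
\[ T_i \;\leq\; (2^{q-1} + \alpha)\sum_k M_{ki}\,a_k \;+\; \gamma\,T_i, \]
with $\alpha = \cO(\beta^q A^2)$ and $\gamma = \cO(\beta^q A)$. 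By \textbf{(B1)} one has $A = \cO(\beta^{-1/2})$, so $\alpha = \cO(\beta^{q-1})$ and $\gamma = \cO(\beta^{q-1/2})$, both $o(1)$ as $\beta \to 0^+$ since $q > 1$. For $\beta$ sufficiently small one solves for $T_i$ and substitutes back into the pointwise bound on $b_j$, which yields the claimed inequality $b_j \leq K_1\,a_j + c_1\sum_k M_{ki}\,a_k$ with $K_1 = 2^{q-1} = \cO(1)$ and $c_1 = o(1)$, uniformly in $i$, $j$, and $f$.
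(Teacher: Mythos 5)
Your argument is correct in substance and follows the paper's opening moves exactly: differentiate under the integral to expose the covariance term $\beta_{ij}\,\mathrm{Cov}_{\bbE_i^\omega}(f,\nabla_j V)$, apply $|a+b|^q\leq 2^{q-1}(|a|^q+|b|^q)$ and Jensen, integrate against $\nu$ via the DLR equation, then invoke \textbf{(A1)} on $F = f-\bbE_i^\omega f$, handle $\nu|F|^q$ with \textbf{(A2)}, and expand $|\nabla_k F|^q$ to reintroduce $b_k=\nu|\nabla_k\bbE_i^\omega f|^q$. Where you genuinely diverge from the paper is in closing the recursion. The paper iterates the pointwise inequality $W_j \leq a_1 Y_j + a_2 S + a_2\sum_{j\neq k\sim i}M_{ki}W_k$ into itself infinitely many times and sums the resulting geometric series with ratio $\epsilon=2Da_2=o(1)$. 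You instead aggregate once: sum the pointwise estimate against the weights $M_{ji}$ over $j$, use symmetry of $M$ and the normalisation $\sup_i\sum_j M_{ij}\leq 1$ to obtain a self-referential scalar inequality $T_i\leq \cO(1)\sum_k M_{ki}a_k+\gamma\,T_i$ with $\gamma=\cO(\beta^q A)=o(1)$ by \textbf{(B1)}, solve for $T_i$, and substitute back. This one-step bootstrap is more economical and conceptually cleaner than the infinite iteration; it yields the same $1/(1-\gamma)$ factor that the geometric series produces.

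Two small caveats. First, solving the inequality for $T_i$ requires an a priori bound $T_i<\infty$ before one may rearrange; the paper addresses exactly this point in a remark following the lemma, and by a very similar aggregation trick (summing over $j\sim i$ and absorbing the defective term into the left-hand side), showing $\sum_{j\sim i}W_j\lesssim\nu|\nabla f|^q<\infty$ for $f\in W^{1,q}(\nu)$. Your proof implicitly leans on this; it would be worth a sentence. Second, your displayed coefficient $2^{q-1}+\alpha$ on $\sum_k M_{ki}a_k$ should be $1+2^{q-1}+\alpha$ to account for the trivially bounded contribution $\sum_{k\not\sim i}M_{ki}b_k\leq\sum_k M_{ki}a_k$ from non-neighbouring sites, but since this is still $\cO(1)$ nothing downstream changes.
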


\begin{proof}
Let us recall that $\bbE_i^\omega f$ does not depend on $x_i$; in general it depends on every other coordinate $x_j$ for $j \neq i$, but in the specific case of interactions of unit range, it depends on neighbouring spins $x_j$ for $j \sim i$. Thus if $j \notin \{\sim i\}$ then the probability kernel of the measure $d\bbE_i^\omega$ does not depend on $j$, and only $f$ may depend on $j$. It follows that one simply has $\nu \abs{\nabla_j(\bbE_i f)}^q = \nu\abs{\bbE_i(\nabla_jf)}^q \leq \nu\abs{\nabla_jf}^q$ by interchanging differentiation and integration in the first equality and by Jensen's inequality together with the DLR equation in the second inequality, so we shall focus on the case $j \sim i$, i.e. $\norm{i - j} = 1$. Note also that to emphasise the dependence of the function $\bbE_if(\omega) = \bbE_i^\omega f$ on $\omega$, we will write $\bbE_i^\omega f$ with the understanding it replaces $\bbE_if$ even though $\bbE_i^\omega f$ is a real number and $\bbE_if$ is a real-valued function. Let $X^{(l)}$ for $l=1, \cdots, K$ and $K\in\bbN$ be the vector fields generating the Lie algebra $\cG$ of a nilpotent Lie group $\bbG$. For $j\in\bbZ^D$, let $\smash{\nabla_j = (X^{(l)}_j)_{l=1,..,K}}$ denote the sub-gradient on the copy $\bbG^{\{j\}}$ of $\bbG$ at site $j$. 

Thus for a bounded differentiable function with bounded derivative
\[ \nabla_j \bbE_i^\omega f = (X_j^{(l)}(\bbE_i^\omega f) )_{l=1,\cdots,K}. \]
If $\rho_i \vcentcolon= d\bbE_i^\omega / \lambda(dx_i)$ is the density of the measure $\bbE_i^\omega$ with respect to the Lebesgue measure $\lambda(dx_i)$ on $\bbG^{\{i\}}$, then
\begin{equation}\label{gradientcalc1}
X_j^{(l)}(\bbE_i^\omega f) = \int \left(\rho_i(X_j^{(l)}f) + (X_j^{(l)}\rho_i)f\right) \lambda(dx_i) = \bbE_i^\omega(X_j^{(l)}f) + \int (X_j^{(l)}\rho_i)f \lambda(dx_i).
\end{equation}
A direct calculation yields
\begin{align*}
X_j^{(l)}\rho_i = X_j^{(l)} \left(\frac{e^{-U_i^\omega}}{\int e^{-U_i^\omega} \lambda(dx_i)}\right) &= \frac{(-X_j^{(l)}U_i^\omega) e^{-U_i^\omega}}{\int e^{-U_i^\omega} \lambda(dx_i)} - \frac{e^{-U_i^\omega} \int (-X_j^{(l)}U_i^\omega) e^{-U_i^\omega} \lambda(dx_i)}{(\int e^{-U_i^\omega} \lambda(dx_i))^2} \\
&= -\rho_i(X_j^{(l)}U_i^\omega) + \rho_i\bbE_i^\omega(X_j^{(l)}U_i^\omega)
\vphantom{\frac{(-X_j^{(l)}U_i^\omega) e^{-U_i^\omega}}{\int e^{-U_i^\omega} \lambda(dx_i)} - \frac{e^{-U_i^\omega} \int (-X_j^{(l)}U_i^\omega) e^{-U_i^\omega} \lambda(dx_i)}{(\int e^{-U_i^\omega} \lambda(dx_i))^2}}
\end{align*}
and therefore 
\begin{equation}\label{gradientcalc2}
\begin{aligned}
\int(X_j^{(l)}\rho_i)f\lambda(dx_i) &= -\bbE_i^\omega((X_j^{(l)}U_i^\omega)f) + (\bbE_i^\omega f)(\bbE_i^\omega(X_j^{(l)}U_i^\omega)) \\
&= -\bbE_i^\omega((f - \bbE_i^\omega f)(X_j^{(k)}U_i^\omega)) 
\vphantom{\int(X_j^{(l)}\rho_i)f\lambda(dx_i)}
\end{aligned}
\end{equation}
Hence we have
\begin{align*}
\nabla_j(\bbE_i^\omega f) &= (\bbE_i^\omega  \nabla_j f) 
-\bbE_i^\omega((f - \bbE_i^\omega f)(\nabla_j U_i^\omega)) \\
&= (\bbE_i^\omega  \nabla_j f) 
-\beta_{ij} \bbE_i^\omega((f - \bbE_i^\omega f)( \nabla_j V(x_i, \omega_j)))
\end{align*}
Inserting \eqref{gradientcalc2} into \eqref{gradientcalc1}, we obtain 
\begin{align*}
\abs{\nabla_j \bbE_i^\omega f}^q &\leq 2(\bbE_i^\omega \abs{\nabla_j f}^q + \bbE_i^\omega(\abs{f - \bbE_i^\omega f}^q \abs{\nabla_j U_i^\omega}^q) \\
&\leq 2(\bbE_i^\omega \abs{\nabla_jf}^q + \beta^q \bbE_i^\omega(\abs{f - \bbE_i^\omega f}^q \abs{\nabla_j V(x_i, \omega_j)}^q)
\end{align*}
by Jensen's inequality. Integrating with respect to $\nu$, we find
\begin{equation}\label{gradEifbound}
\begin{aligned}
\nu\abs{\nabla_j \bbE_i^\omega f}^q &\leq 2\nu\abs{\nabla_jf}^q + 2\beta^q\nu(\abs{f - \bbE_i^\omega f}^q \abs{\nabla_j V(x_i, x_j)}^q 
\vphantom{\left(\nu\abs{f - \bbE_i^\omega f}^q + \sum_{k \in \bbZ^D} C_\beta^{\norm{k - i}}\nu\abs{\nabla_k(f - \bbE_i^\omega f)}^q\right)} \\
&\leq 2\nu\abs{\nabla_j f}^q + 2A\beta^q\left(\nu\abs{f - \bbE_i^\omega f}^q + \sum_{k \in \bbZ^D} M_{ki} \nu\abs{\nabla_k(f - \bbE_i^\omega f)}^q\right)
\end{aligned}
\end{equation}
where the second inequality is by \bf{(A1)}. For the first term in the bracket, it holds
\begin{equation}\label{gradEifbound1}
\nu\abs{f - \bbE_i^\omega f}^q \leq A \sum_{k \in \bbZ^D} M_{ki} \nu\abs{\nabla_k f}^q 
\end{equation}
by \bf{(A2)}, and for the second term, it holds
\begin{equation}\label{gradEifbound2}
\begin{aligned}
\sum_{k \in \bbZ^D} &M_{ki}\nu\abs{\nabla_k(f - \bbE_i^\omega f)}^q \\
&\leq 2\sum_{k \in \bbZ^D} M_{ki} \left(\nu\abs{\nabla_kf}^q + \nu\abs{\nabla_k\bbE_i^\omega f}^q\right) \\
&\leq 4\sum_{k \in \bbZ^D} M_{ki} \nu\abs{\nabla_kf}^q + 2\sum_{j \neq k \sim i} M_{ki}\nu\abs{\nabla_k\bbE_i^\omega f}^q + 2\nu\abs{\nabla_j\bbE_i^\omega f}^q
\end{aligned}
\end{equation}
where in the third inequality we have simplified $\nu\abs{\nabla_k \bbE_i^\omega f}^q$ by considering the two cases $k \sim i$ and $k \not\sim i$ separately. In the first case, $\norm{(M_{ij})}_{L^\infty} \leq 1$ by \bf{(B2)} so
\begin{align*}
2\sum_{k \sim i} M_{ki} \nu\abs{\nabla_k\bbE_i^\omega f}^q &= 2\sum_{j \neq k \sim i} M_{ki} \nu\abs{\nabla_k\bbE_i^\omega f}^q + 2M_{ji} \nu\abs{\nabla_j\bbE_i^\omega f}^q \\
&\leq 2\sum_{j \neq k \sim i} M_{ki} \nu\abs{\nabla_k\bbE_i^\omega f}^q + 2\nu\abs{\nabla_j\bbE_i^\omega f}^q,
\vphantom{2\sum_{\substack{k \sim i \\ k \neq j}} (M_{ki} + M_{kj}) \nu\abs{\nabla_k\bbE_i^\omega f}^q + 2(M_{ji} + M_{jj}) \nu\abs{\nabla_j\bbE_i^\omega f}^q}
\end{align*}
and in the second case $\nu\abs{\nabla_k \bbE_i^\omega f}^q \leq \nu\abs{\nabla_k f}^q$ and thus it can be absorbed into the first sum $2\sum_{k \in \bbZ^D} M_{ki}\nu\abs{\nabla_kf}^q$. Inserting \eqref{gradEifbound1} and \eqref{gradEifbound2} back into \eqref{gradEifbound}, we find
\begin{equation}\label{gradEifboundInduction}
\nu\abs{\nabla_j\bbE_i^\omega f}^q \leq a_1\nu\abs{\nabla_j f}^q + a_2\sum_{k \in \bbZ^D} M_{ki}\nu\abs{\nabla_k f}^q + a_3\sum_{j \neq k \sim i} M_{ki} \nu\abs{\nabla_k\bbE_i^\omega f}^q 
\end{equation}
with 
\[ a_1 \vcentcolon= \frac{2}{1 - 4A\beta^q}, \quad a_2 \vcentcolon= \frac{2A\beta^q(A + 4)}{1 - 4A\beta^q}, \quad a_3 \vcentcolon= \frac{4A\beta^q}{1 - 4A\beta^q}. \]
By the estimate on $A$ of \bf{(B1)}, we may assume $a_1 < 4$ and $a_3 < a_2 < \beta^{(q-1)/2} < 1$, for $\beta$ small enough. The difference between \eqref{gradEifboundInduction} with \eqref{sweepoutineq} is the appearance of the summation $a_3 \sum_{j \neq k \sim i} M_{ki} \nu\abs{\nabla_k\bbE_i^\omega f}^q$ which we claim can be eliminated by the following induction procedure.

For notational convenience, let us denote by
\[ W_k \vcentcolon= \nu\abs{\nabla_k \bbE_i^\omega f}^q, \quad Y_k \vcentcolon= \nu\abs{\nabla_kf}^q, \quad S \vcentcolon= \sum_{k \in \bbZ^D} M_{ki}Y_k, \]
in which case \eqref{gradEifboundInduction} reads, for $\beta>0$ small enough and $a_2 > a_3$ replacing $a_3$ for simplicity,
\begin{equation}\label{gradEifboundInductionXYZ}
\begin{aligned}
W_j &\leq a_1Y_j + a_2S + a_2\sum_{j \neq k \sim i} M_{ki} W_k.
\end{aligned}
\end{equation}
We can apply \eqref{gradEifboundInductionXYZ} on each $W_k$ appearing in the sum; one iteration yields 
\begin{align*}
a_2\sum_{j \neq k \sim i} M_{ki} W_k &\leq a_2\sum_{j \neq k \sim i} M_{ki} \left(a_1Y_k + a_2S + a_2\sum_{k \neq \ell \sim i} M_{\ell i} W_\ell\right) \\
&\leq a_1a_2S + 2da_2^2S + 2da_2^2\sum_{k \neq \ell \sim i} M_{\ell i} W_\ell 
\vphantom{a_2\sum_{k \sim i} \delta\left(a_1Y_k + a_2S + a_2\sum_{j \neq \ell \sim i} \delta W_\ell\right)} \\
&\leq a_1a_2S + a_2(2da_2)S + a_2(2da_2)\sum_{k \neq \ell \sim i} M_{\ell i} W_\ell
\vphantom{a_2\sum_{k \sim i} \delta\left(a_1Y_k + a_2S + a_2\sum_{j \neq \ell \sim i} \delta W_\ell\right)}
\end{align*}
since $\sum_{j \neq k \sim i} M_{\ell i} Y_k \leq S$ and $\norm{(M_{ij})}_{l^\infty} \leq 1$ by \bf{(B2)}, and each site has exactly $2D$ neighbours. With $\epsilon = 2Da_2 = o(1)$, we obtain after a second iteration
\begin{align*}
W_j &\leq a_1Y_j + a_2S + a_1a_2S + \epsilon a_2S + \epsilon a_2 \sum_{j \neq \ell \sim i} M_{\ell i} W_\ell \\
&\leq a_1Y_j + a_2S + a_1a_2S + \epsilon a_2S + \epsilon a_1a_2S + \epsilon^2 a_2S + \epsilon^2 a_2\sum_{j \neq m \sim i} M_{mi} W_m
\end{align*}
and so on and so forth; iterating this procedure gives in the limit 
\[ W_j \leq a_1Y_j + \frac{a_2 + a_1a_2}{1 - \epsilon}S \]
by taking $\epsilon = \cO(a_2) = o(1)$ small enough. In particular if $\beta$ is small enough then since $a_1 < 4 = \cO(1)$ and $a_2 < \beta^{(q-1)/2}$ one obtains the desired inequality with $K_1 = 4$ and $c_1 = \beta^{(q-1)/4}$. 
\end{proof}

\begin{remark}
The estimates given here are somewhat wasteful but can easily be improved. For instance $a_2 = \cO(\beta^{q-1})$ so $c_1$ can be taken as $\cO(\beta^{q-1})$ as well, but the choice of estimate is intentional and eliminates the need to write down the controlling constant. 
\end{remark}

\begin{remark}
For the limit of the inductive procedure to be well-defined, we need to show $\sum_{j \neq k \sim i} M_{ki} W_k \leq \sum_{k \sim i} M_{ki} W_k = \nu\abs{\nabla_{\{\sim i\}} \bbE_i^\omega f}^q \leq \nu\abs{\nabla f}^q$ is bounded. Adding $a_2M_{ji}W_j$ to the right hand side of \eqref{gradEifboundInductionXYZ} for simplicity and summing over $j \sim i$ gives
\begin{align*}
\sum_{j \sim i} W_j &\leq a_1 \sum_{j \sim i} Y_j + 2da_2S  + 2da_2\sum_{k \sim i} M_{ki} W_k \\
&\leq  (a_1 + 2da_2) \nu\abs{\nabla f}^q + 2da_2\sum_{k \sim i} W_k. 
\end{align*}
Taking the sum $2Da_2\sum_{k \sim i} W_k$ to the left hand side we obtain $(1 - 2da_2)\sum_{j \sim i} W_j \leq (a_1 + 2da_2)\nu\abs{\nabla f}^q$; since $a_2 = o(1)$, if $\beta$ is small enough then 
\[\sum_{j \sim i} W_j \leq 4a_1\nu\abs{\nabla f}^q < 16\nu\abs{\nabla f}^q < \infty\] 
by assumption of $f \in W^{1,q}(\nu)$. 
\end{remark}

Using the single-site sweeping out inequality \eqref{sweepoutineq}, we will prove a global sweeping out inequality on a carefully chosen infinite volume subsystem. The first step is to form a finite partition of $\bbZ^D$ whose components are infinite volume subsystems $(\Gamma_n)_{n=0}^{N-1}$ with the property that no interaction exists within each $\Gamma_n$ which, in our case, translates to each pair of points $i, j \in \Gamma_n$ satisfying $\norm{i - j} > 1$ whenever $i \neq j$. The partition chosen here will be consist of $N = 2^D$ translations of the cube $\Gamma_0 = (2\bbZ)^D$ translating $0$ to another point in $[0, 1]^D \sbs \bbZ^D$, although we remark that this is not the only partition. Indeed, one can also use the partition $\Gamma_0 = \{i \in \bbZ^D \mid \norm{i} \in 2\bbZ\}$ with $\Gamma_1 = \bbZ^D \setminus \Gamma_0$ as in \cite[\S5]{InPa09}; the choice of partition enters the proof through the controlling constants so the partitions are equivalent as far as obtaining the $q$-SGI (with no regard for the constant) is concerned. 

We will give a sketch of the proof of how to generalise the results to interactions of finite range $R > 1$ later, and it shall be convenient to adopt the former partition here instead. (The latter partition generalises to finite range $R > 1$, but the proof for the former is somewhat easier.) In any case we fix in the sequel our choice of partition $(\Gamma_n)_{n=0}^{N-1}$ as above, and we begin by proving global sweeping out inequalities for $\Gamma_n$; the self-similarity of the $\Gamma_n$ will mean the proof follows for all if it follows for one. 

\begin{lemma}\label{lem:sweepoutGamma}
For any $\Lambda \sbs \bbZ^D$ and $n \in \{0, 1, \cdots, N - 1\}$, the sub-lattice sweeping out inequality
\begin{equation}\label{sweepoutGamma}
\nu\abs{\nabla_\Lambda (\bbE_{\Gamma_n}f)}^q \leq K_2\nu\abs{\nabla_\Lambda f}^q + c_2\nu\abs{\nabla_{\Lambda^c}f}^q
\end{equation}
holds for some constants $K_2 = \cO(1)$ and $c_2 = o(1)$ independently of $n$ and $\Lambda$. 
\end{lemma}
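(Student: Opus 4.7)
The plan is to exploit the product structure of the local Gibbs measure on $\Gamma_n$. Since sites in $\Gamma_n = g_n + (2\bbZ)^D$ have pairwise $l^1$-distance at least $2$ and the interactions are of range $1$, the potential $U_{\Gamma_n}^\omega(x_{\Gamma_n})$ decouples into a sum of single-site pieces and hence
\[ \bbE_{\Gamma_n}^\omega = \bigotimes_{i \in \Gamma_n} \bbE_i^\omega. \]
In particular $\bbE_{\Gamma_n}^\omega f$ depends on $\omega$ only through $\omega_{\Gamma_n^c}$, which forces $\nabla_j \bbE_{\Gamma_n} f \equiv 0$ for every $j \in \Gamma_n$. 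It therefore suffices to control $\sum_{j \in \Lambda \setminus \Gamma_n} \nu\abs{\nabla_j \bbE_{\Gamma_n} f}^q$.

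For $j \in \Lambda \setminus \Gamma_n$ I would redo the density-differentiation from the proof of Lemma \ref{lem:sweepoutineq}, now for the tensorised density $\rho_{\Gamma_n}^\omega = \prod_{i \in \Gamma_n} \rho_i^\omega$. A direct parity check gives $\#\{i \in \Gamma_n : i \sim j\} \in \{0, 2\}$ whenever $j \notin \Gamma_n$, and only those neighbours contribute to $\nabla_j \log \rho_{\Gamma_n}^\omega$; this yields the identity
\[ \nabla_j \bbE_{\Gamma_n}^\omega f = \bbE_{\Gamma_n}^\omega \nabla_j f - \sum_{\substack{i \in \Gamma_n \\ i \sim j}} \beta_{ij}\, \bbE_{\Gamma_n}^\omega\!\bigl[(f - \bbE_i^\omega f)\, \nabla_{\omega_j} V(x_i, \omega_j)\bigr]. \]
Raising to the $q$-th power, integrating against $\nu$ and invoking Jensen with the DLR equation, I arrive at
\[ \nu\abs{\nabla_j \bbE_{\Gamma_n} f}^q \leq 3^{q-1}\nu\abs{\nabla_j f}^q + 3^{q-1}\beta^q \sum_{\substack{i \in \Gamma_n \\ i \sim j}} \nu\bigl[\abs{f - \bbE_i^\omega f}^q\abs{\nabla_j V(x_i, x_j)}^q\bigr]. \]

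Each cross-term I would unwind via the three tools already at hand. The weak $U$-bound \textbf{(A1)}, applied with $f$ replaced by $f - \bbE_i^\omega f$, generates two pieces: $\nu\abs{f - \bbE_i^\omega f}^q$, estimated by \textbf{(A2)} combined with DLR as $A\sum_k M_{ki}\nu\abs{\nabla_k f}^q$; and $\nu\abs{\nabla_k(f - \bbE_i^\omega f)}^q$, split via $2^{q-1}(\nu\abs{\nabla_k f}^q + \nu\abs{\nabla_k\bbE_i^\omega f}^q)$ and then controlled by the single-site sweeping out Lemma \ref{lem:sweepoutineq}. Collecting the resulting double $M$-sums via the $l^\infty$-summability of \textbf{(B2)} gives
\[ \beta^q\nu\bigl[\abs{f - \bbE_i^\omega f}^q\abs{\nabla_j V(x_i, x_j)}^q\bigr] \leq c'\sum_{k \in \bbZ^D} M_{ki}\, \nu\abs{\nabla_k f}^q \]
with $c' = \cO(\beta^q A^2) = \cO(\beta^{q-1}) = o(1)$ thanks to \textbf{(B1)}. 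Summing over $j \in \Lambda \setminus \Gamma_n$ and swapping orders, the coefficient $\sum_{j \in \Lambda \setminus \Gamma_n}\sum_{i \in \Gamma_n, \, i \sim j} M_{ki}$ is bounded by $2D\bar M$ uniformly in $k$, since each $i \in \Gamma_n$ has at most $2D$ neighbours and $\sup_i\sum_k M_{ki} \leq \bar M < \infty$. Splitting $\sum_{k \in \bbZ^D}$ into its $\Lambda$- and $\Lambda^c$-pieces, the former contributes $o(1)\cdot\nu\abs{\nabla_\Lambda f}^q$ and combines with the main $3^{q-1}\nu\abs{\nabla_\Lambda f}^q$ term into $K_2\nu\abs{\nabla_\Lambda f}^q$ with $K_2 = \cO(1)$, while the latter produces the required $c_2\,\nu\abs{\nabla_{\Lambda^c} f}^q$ with $c_2 = o(1)$; both constants are manifestly independent of $n$ and $\Lambda$.

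The main obstacle I anticipate is the careful bookkeeping needed so that the factor $\beta^q$ gained from $\abs{\nabla_j U_{\Gamma_n}}^q$ actually defeats the growth $A^2 = \cO(\beta^{-1})$ incurred by nesting \textbf{(A1)} on top of \textbf{(A2)}. Only thanks to \textbf{(B1)} does one end up with a net factor $\beta^{q-1} = o(1)$; without that estimate $c_2$ would be merely $\cO(1)$, which would break the iteration leading to the full global sweeping out, and hence to Theorem \ref{thm:singletoglobal}.
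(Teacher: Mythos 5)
Your proof is correct and follows a genuinely different route from the paper's. The paper reduces to the cube $Q_j$ of $2^D$ sites of $\Gamma_0$ surrounding $j$, then chains the single-site sweeping-out inequality Lemma \ref{lem:sweepoutineq} through the composition $\bbE_{i^j_N}\cdots\bbE_{i^j_1}$ via a double recursion in the quantities $W^j_m$, $Y^j_m$, $S^{(j)}$, finally controlling the resulting sums by cube-counting ($R_j$, $O_k$, and the constants $L_1,\dots,L_4$). You instead exploit the product structure of $\bbE_{\Gamma_n}^\omega$ once and for all: since the tensorised density differentiates cleanly and a parity check shows that at most two sites of $\Gamma_n$ neighbour any $j\notin\Gamma_n$ (a sharper count than the paper's $2^D$), you get an exact identity for $\nabla_j\bbE_{\Gamma_n}^\omega f$ with only two cross-terms, each of which you then estimate by one application of \textbf{(A1)} (with $f$ replaced by $f - \bbE_i^\omega f$), \textbf{(A2)}, and Lemma \ref{lem:sweepoutineq}. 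This bypasses the cube and iteration machinery entirely; the bookkeeping reduces to a single nesting of the $U$-bound, and correctly, the factor $\beta^q$ swallows the $A^2 = \cO(\beta^{-1})$ that results, leaving $c_2 = \cO(\beta^{q-1}) = o(1)$ and $K_2 = 3^{q-1}(1+o(1)) = \cO(1)$, both manifestly uniform in $n$ and $\Lambda$ as required. Concretely, your approach yields a much smaller $K_2$ (around $3^{q-1}$ rather than $4^{2^D+1}$) and is shorter, at the modest cost of invoking \textbf{(A1)}--\textbf{(A2)} directly rather than only through Lemma \ref{lem:sweepoutineq}; the paper's inductive presentation, by contrast, is designed so that essentially the same template applies verbatim in Lemma \ref{lem:weakSGIGamma} and when passing to finite range $R>1$.
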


\begin{remark}
We emphasise the reason that the sub-gradient is taken over a general subset $\Lambda \sbs \bbZ^D$ of the lattice here instead of at a single site $j \in \bbZ^D$ as in Lemma \ref{lem:sweepoutineq} is because in the latter case the case $j \not\sim i$ is trivial and thus the sweeping out inequality is only meaningful for $\Lambda = \{\sim i\}$. 
\end{remark}

\begin{proof}
As mentioned earlier, it suffices to prove the case $n = 0$. First, rewrite the gradient into its single-site form 
\begin{equation}\label{SGIsweepoutGamma}
\nu\abs{\nabla_\Lambda(\bbE_{\Gamma_0} f)}^q = \sum_{j \in \Lambda} \nu\abs{\nabla_j(\bbE_{\Gamma_0}f)}^q.
\end{equation}
Let $i \in \Gamma_0$ and $j \in \Lambda$. Since $\Gamma_0$ has no self-interaction, the measure $\bbE_{\Gamma_0}$ can be realised as the product of the single-site measures $\bbE_{\Gamma_0} = \bbE_{i_1}\bbE_{i_2} \cdots$ where $(i_m)_{m \geq 1}$ enumerates $\Gamma_0$. This reduces the problem of studying $\nabla_j\bbE_{\Gamma_0}$ to the problem of studying $\nabla_j\bbE_i$ for $i \in \Gamma_0$. 

Note if $\abs{i_m - j} > 1$, that is if site $j$ does not interact with site $i_m$, then $\nabla_j \bbE_{i_m}^\omega = \bbE_{i_m}^\omega \nabla_j f$ together with Jensen's inequality allows us to pull $\bbE_{i_m}^\omega$ out, which is then eliminated by $\nu$ with the DLR equation, so the only issue are sites $i_m \in \Gamma_0$ such that $i_m \sim j$. Thinking of $\Gamma_0$ as partitioning $\bbZ^D$ into $D$-dimensional cubes of edge length $2$, each containing exactly $N = 2^D$ points in $\Gamma_0$, it follows there is at least one cube containing $j$. (It might be the case $j$ falls on the boundary of two cubes; we may without loss of generality choose the cube whose centre is closest to the origin.) Denote by $Q_j$ this cube. By construction, 
\[ \Gamma_0 \cap \{\sim j\} \sbs \Gamma_0 \cap Q_j \vcentcolon= \{i_1^j, i_2^j, \cdots, i_N^j\} \sbs \Gamma_0 \]
where $(i_m^j)_{m = 1, 2, \cdots, N}$ enumerates $\Gamma_0 \cap Q_j$. Thus we can reduce the sites in \eqref{SGIsweepoutGamma} to
\begin{equation}\label{SGIsweepoutGamma0}
\nu\abs{\nabla_\Lambda(\bbE_{\Gamma_0}f)}^q \leq \sum_{j \in \Lambda} \nu\abs{\nabla_j(\bbE_{Q_j}f)}^q = \sum_{j \in \Lambda} \nu\abs{\nabla_j(\bbE_{i_N^j} \cdots \bbE_{i_2^j} \bbE_{i_1^j}f)}^q
\end{equation}
Let us fix $j$ and drop the superscript, that is write $i_m$ for $i_m^j$. Denote by $f_0 \vcentcolon= f$ and $f_m \vcentcolon= \bbE_{i_m} \cdots \bbE_{i_2} \bbE_{i_1} f$ for $m = 1, 2, \cdots, N$, so that $\nu\abs{\nabla_j(\bbE_{\Gamma_0}f)}^q \leq \nu\abs{\nabla_jf_N}^q$. By Lemma \ref{lem:sweepoutineq}, we have 
\begin{equation}\label{f2dInduction}
\nu\abs{\nabla_j f_m}^q \leq K_1\nu\abs{\nabla_j f_{m-1}}^q + c_1 \sum_{k \in \bbZ^D} M_{k, i_m}\nu\abs{\nabla_k f_{m-1}}^q.
\end{equation}
Note that if $k$ is sufficiently far from $Q_j$ then site $k$ does not interact with any of the $N$ sites in $Q_j$. We can make this observation more precise: if $R_j$ is the smallest cube strictly containing $Q_j$, then for $k \notin R_j$ one has $\nu\abs{\nabla_k f_m}^q \leq \nu\abs{\nabla_k f}^q$ by Jensen's inequality, and 
\[ M_{k, i_m} \leq M_{k, Q_j} \vcentcolon= \sup_{i \in Q_j} M_{k, i} \]
since $i_m \in Q_j$. Moreover, $M_{k, i_m} \leq 1$ by \bf{(B2)}, so 
\begin{equation}\label{f2dInductionStep}
\nu\abs{\nabla_j f_{m}}^q \leq K_1\nu\abs{\nabla_j f_{m - 1}}^q + c_1\sum_{k \in R_j}\nu\abs{\nabla_k f_{m - 1}}^q + c_1\sum_{k \notin R_j} M_{k, Q_j} \nu\abs{\nabla_kf}^q.
\end{equation}

In fact, the only dependence on $j$ in this equation comes from the fact a choice of $j$ gives a choice of $Q_j$ and therefore a choice of sites $i_m$ for $m = 1, 2, \cdots, N$ with respect to which we take a conditional expectation. This means we may replace $\nabla_j$ with $\nabla_k$ for $k \in \bbZ^D$ without replacing $Q_j$, and thus $(i_m) = (i_m^j)$, with $Q_k$ and $(i_m^k)$ respectively. We can then apply Lemma \ref{lem:sweepoutineq} as before, and, again, analyse the summation through the decomposition $\bbZ^D = R_j \sqcup (R_j)^c$ and not $\bbZ^D = R_k \sqcup (R_k)^c$. Thus we arrive at
\begin{equation}\label{f2dInductionStepk}
\nu\abs{\nabla_k f_m}^q \leq K_1\nu\abs{\nabla_k f_{m-1}}^q + c_1\sum_{\ell \in R_j}\nu\abs{\nabla_\ell f_{m-1}}^q + c_1\sum_{\ell \notin R_j} M_{\ell, Q_j} \nu\abs{\nabla_\ell f}^q.
\end{equation}
In fact we only need this for $k \in R_j$ but the statement holds nonetheless unconditionally.


We now follow an inductive procedure in the spirit of the proof of Lemma \ref{lem:sweepoutineq}. Denote by
\[ W_m^j \vcentcolon= \nu\abs{\nabla_j f_m}^q, \quad Y_m^j \vcentcolon= \sum_{k \in R_j} \nu\abs{\nabla_k f_m}^q = \sum_{k \in R_j} W_m^k, \quad S^{(j)} \vcentcolon= \sum_{k \notin R_j} M_{k, Q_j}\nu\abs{\nabla_k f}^q \]
so that \eqref{f2dInductionStepk} now reads
\begin{equation}\label{f2dInductionWYZ}
W_m^k \leq K_1W_{m-1}^k + c_1Y_{m-1}^j + c_1S^{(j)}.
\end{equation}
Noting $\abs{R_j} = 5^D$, we can sum the left hand side over $k \in R_j$ to conclude
\begin{equation}\label{f2dInductionWYZ2}
\begin{aligned}
Y_m^j \leq \sum_{k \in R_j} (K_1W_{m-1}^k + c_1Y_{m-1}^j + c_1S^{(j)}) &\leq (K_1 + 5^Dc_1)Y_{m-1}^j + 5^Dc_1S^{(j)} \\
&\leq 5Y_{m-1}^j + S^{(j)} \vphantom{\sum_{k \in R_j} (K_1W_{m-1}^k + c_1Y_{m-1}^j + c_1S^{(j)})} \\  
\end{aligned}
\end{equation}
for $\beta$ small enough, since $K_1 = 4$ and $c_1 = o(1)$. Note if we iterate \eqref{f2dInductionWYZ} into itself, we find 
\begin{align*}
W_{m-1}^j &\leq K_1W_{m-2}^j + c_1Y_{m-2}^j + c_1 S^{(j)} 
\vphantom{K_1^{m-1}W_0^j +  c_1\sum_{i=2}^m K_1^{i-2} Y_{m-i}^j + c_1\sum_{i=2}^m K_1^{i-2} S^{(j)}} \\
&\leq c_1Y_{m-2}^j + c_1S^{(j)} + K_1^2W_{m-3}^j + K_1c_1Y_{m-3}^j + K_1c_1S^{(j)} 
\vphantom{K_1^{m-1}W_0^j +  c_1\sum_{i=2}^m K_1^{i-2} Y_{m-i}^j + c_1\sum_{i=2}^m K_1^{i-2} S^{(j)}} \\
&\mathrel{\makebox[\widthof{=}]{\vdots}}
\vphantom{K_1^{m-1}W_0^j +  c_1\sum_{i=2}^m K_1^{i-2} Y_{m-i}^j + c_1\sum_{i=2}^m K_1^{i-2} S^{(j)}} \\
&\leq K_1^{m-1}W_0^j +  c_1\sum_{i=2}^m K_1^{i-2} Y_{m-i}^j + c_1\sum_{i=2}^m K_1^{i-2} S^{(j)}
\end{align*}
so that $K_1W_{m-1}^j \leq K_1^mW_0^j + c_1\sum_{i=1}^m K_1^i Y_{m-1-i}^j + c_1\sum_{i=1}^m K_1^i S^{(j)}$. Similarly, iterating \eqref{f2dInductionWYZ2} into itself gives 
\[ Y_m^j \leq 5^m (Y_0^j + S^{(j)}) \leq 5^N (Y_0^j + S^{(j)}) \]
so that
\begin{equation}\label{f2dInductionWYZ3}
\begin{aligned}
\nu\abs{\nabla_j f_{N}}^q = W_{N}^j &\leq K_1^{N}W_0^j + c_1\sum_{i=0}^{N} K_1^iY_{N-1-i}^j + c_1\sum_{i=0}^{N} K_1^i S^{(j)}
\vphantom{K_1^{N}W_0^j + c_1\sum_{i=0}^{N} K_1^iY_{N-1-i}^j + c_1\sum_{i=0}^{N} K_1^i S^{(j)}} \\
&\leq L_1W_0^j + c_1L_2Y_0 + c_1L_2 S^{(j)}
\end{aligned}
\end{equation}
for $L_1 = K_1^N$ and $L_2 = (N + 1)(5K_1)^N$. If we take this back into the summation \eqref{SGIsweepoutGamma0}, we find 
\[ \nu\abs{\nabla_\Lambda(\bbE_{\Gamma_0}f)}^q \leq \sum_{j\in\Lambda} \left(L_1\nu\abs{\nabla_jf}^q + c_1L_2\sum_{k \in R_j}\nu\abs{\nabla_kf}^q + c_1L_2\sum_{k \notin R_j} M_{k, Q_j} \nu\abs{\nabla_kf}^q\right). \]
Let us denote by $I_1, I_2, I_3$ respectively the three terms. 


$I_1$, by definition, is $L_1 \nu\abs{\nabla_\Lambda f}^q$, while estimating $I_2$ boils down to a counting exercise: if $O_k = \{j \in \bbZ^D \mid k \in R_j\}$ then $\abs{S_k}$ is finite and bounded above uniformly by say $L_3$. Indeed, if $j$ is sufficiently far from $k$ then $R_j$ is far from $k$ also, that is there is a finite neighbourhood of $k$ which contains $O_k$. It follows
\begin{equation}\label{sweepoutI2}
I_2 = c_1L_2\sum_{j \in \Lambda}\sum_{k \in R_j} \nu\abs{\nabla_k f}^q \leq c_1L_2\sum_{k \in \bbZ^D}\sum_{j \in O_k} \nu\abs{\nabla_k f}^q \leq c_1L_2L_3\nu\abs{\nabla f}^q.
\end{equation}

To estimate $I_3$, we expand the summation from $j \in \Lambda$ and $k \in R_j$ to $j, k \in \bbZ^D$, and following the arguments for $I_2$, we conclude
\[ \sum_{j \in \bbZ^D} M_{k, Q_j} \leq \sum_{j \in \bbZ^D} \sum_{\ell \in Q_j} M_{k, \ell} \leq \sum_{j \in \bbZ^D} \sum_{\ell \in R_j} M_{k, \ell} = \sum_{\ell \in \bbZ^D} \sum_{j \in O_\ell} M_{k, \ell} \leq L_3\sum_{\ell \in \bbZ^D} M_{k, \ell} \leq L_3L_0 \eqqcolon L_4 \]
for $L_0 = \sup_{i \in \bbZ^D} \sum_{j \in \bbZ^D} M_{ij}$ the supremum of \bf{(B2)}. It follows
\begin{equation}\label{sweepoutI3}
\begin{aligned}
I_3 &\leq c_1L_2\sum_{j \in \bbZ^D} \sum_{k \in \bbZ^D} M_{k, Q_j}\nu \abs{\nabla_kf}^q \\
&\leq c_1L_2 \sum_{k \in \bbZ^D} \nu \abs{\nabla_k f}^q \sum_{j \in \bbZ^D} M_{k, Q_j} \leq c_1L_2L_4 \nu\abs{\nabla f}^q.
\end{aligned}
\end{equation}

Since $\abs{\nabla f}^q = \smallabs{\nabla_\Lambda f}^q + \smallabs{\nabla_{\Lambda^c} f}^q$, by collecting the estimates $I_1 = L_1\nu\abs{\nabla_\Lambda f}^q$ and \eqref{sweepoutI2}, \eqref{sweepoutI3}, this proves the lemma with $K_2 = L_1 + c_1L_2(L_3 + L_4)$ and $c_2 = c_1L_2(L_3 + L_4)$. More concretely for $\beta$ small enough we may take $K_2 = 4L_1 = 4^{N+1}$ and $c_2 = \beta^{(q-1)/8}$ since $c_2 = \cO(c_1) = \cO(\beta^{(q-1)/4})$. 
\end{proof}

The next problem is to use the sweeping out inequality on the sub-lattice to transfer the weak single-site $q$-SGI to a weak sub-lattice $q$-SGI; in particular we want to show the weak $q$-SGI holds for each of the $N$ sub-lattices $\Gamma_n$.  

\begin{lemma}\label{lem:weakSGIGamma}
For any $n \in \{0, 1, \cdots, N-1\}$, the following weak sub-lattice $q$-SGI 
\begin{equation}\label{weakSGIGamma}
\nu\bbE_{\Gamma_n}^\omega\abs{f - \bbE_{\Gamma_n}^\omega f}^q \leq C_0\nu\abs{\nabla f}^q
\end{equation}
holds for some constant $C_0 > 0$ independently of $n$ such that $C_0 = \cO(\beta^{-1/2})$. 
\end{lemma}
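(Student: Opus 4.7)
My plan exploits the product structure of $\bbE_{\Gamma_n}^\omega$. By construction of the partition, no two distinct sites $i, j \in \Gamma_n$ satisfy $\norm{i - j} = 1$, so the subsystem has no internal interactions and the measure factorises as $\bbE_{\Gamma_n}^\omega = \bigotimes_{i \in \Gamma_n} \bbE_i^\omega$, each factor depending only on coordinates in $\Gamma_n^c$. Given this, I would invoke a sub-additivity property of the $q$-variance for product measures, valid for $q \in (1, 2]$, of the form
\[ \bbE_{\Gamma_n}^\omega \abs{f - \bbE_{\Gamma_n}^\omega f}^q \leq K_q \sum_{i \in \Gamma_n} \bbE_{\Gamma_n}^\omega \bbE_i^\omega \abs{f - \bbE_i^\omega f}^q \]
with a constant $K_q > 0$ depending only on $q$. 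For $q = 2$ this is the classical Efron-Stein identity with $K_2 = 1$; for $q \in (1, 2)$ it follows from the martingale decomposition of $f - \bbE_{\Gamma_n}^\omega f$ across an enumeration of $\Gamma_n$, the Burkholder-Davis-Gundy inequality in $L^q$, the elementary comparison $(\sum a_m^2)^{q/2} \leq \sum \abs{a_m}^q$ valid for $q \leq 2$, and a Jensen reduction of each martingale difference to a single-site variance. This is precisely the content, at the level of $q$-variances, of the tensorisation of the $q$-SGI established in \cite{BoZe05} and alluded to in Section 2.

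Once this sub-additivity is in hand, integrating against $\nu$ and using the DLR identity $\nu\bbE_{\Gamma_n}^\cdot = \nu$ gives
\[ \nu \bbE_{\Gamma_n}^\omega \abs{f - \bbE_{\Gamma_n}^\omega f}^q \leq K_q \sum_{i \in \Gamma_n} \nu \bbE_i^\omega \abs{f - \bbE_i^\omega f}^q \leq K_q A \sum_{i \in \Gamma_n}\sum_{k \in \bbZ^D} M_{ki} \nu \abs{\nabla_k f}^q, \]
where the second inequality is the weak single-site $q$-SGI of \bf{(A2)}. Swapping the order of summation and using the uniform $l^1$-summability from \bf{(B2)}, namely $L_0 \vcentcolon= \sup_i \sum_j M_{ij} < \infty$, the double sum is bounded by $L_0 \nu\abs{\nabla f}^q$. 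This yields the lemma with $C_0 = K_q A L_0 = \cO(\beta^{-1/2})$ by \bf{(B1)}, and crucially this bound is independent of $|\Gamma_n|$.

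The main obstacle is securing the size-independent constant $K_q$. The naive iteration $\abs{a + b}^q \leq 2^{q-1}(\abs{a}^q + \abs{b}^q)$ would produce constants growing geometrically in the number of factors, which is untenable since $\Gamma_n$ is infinite; an extension to countable $\Gamma_n$ by monotone truncation is needed as well. The Burkholder-Davis-Gundy route circumvents the geometric blowup at the cost of a $q$-dependent but $\Gamma_n$-independent universal constant; for $q = 2$ this constant is invisible by orthogonality of martingale differences, but it is genuinely present for $q \in (1, 2)$. With this ingredient secured, the remaining steps are a clean assembly of DLR, \bf{(A2)}, and \bf{(B2)}, and unlike in Lemmata \ref{lem:sweepoutineq} and \ref{lem:sweepoutGamma} no appeal to the single-site sweeping out inequality is required.
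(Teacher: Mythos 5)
Your proof is correct and takes a genuinely different route from the paper's. Let me describe the comparison.

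The paper establishes the sub-additivity step not via Burkholder but via a telescoping identity combined with the elementary sub-quadratic inequality
\[ \abs{a}^q - \abs{b}^q \leq \operatorname{sgn}(b)\, q\, \abs{b}^{q-1}(a-b) + 2\abs{a-b}^q, \]
which after integration against $\bbE_{i_{m+1}}^\omega$ yields exactly your sub-additivity with the explicit constant $K_q = 2$; this is lighter machinery than BDG and gives the same conclusion, so at this stage the two routes are essentially interchangeable. The real divergence comes afterwards. The paper applies \textbf{(A2)} directly to the partially conditioned function $f_m = \bbE_{i_m}\cdots\bbE_{i_1}f$, which produces terms $\nu\abs{\nabla_j f_m}^q$ for $j \in (\Gamma_0)^c$; since $\nabla_j$ does not commute with $\bbE_{i_\ell}$ at such $j$, these must be controlled by feeding in the recursion from the sweeping-out Lemma~\ref{lem:sweepoutGamma}. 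Your proposal bypasses this entirely through the observation you implicitly make in the ``Jensen reduction'' step: because $\Gamma_n$ has no internal interaction, the single-site operators $(\bbE_i)_{i \in \Gamma_n}$ commute, so
\[ f_m - \bbE_{i_{m+1}}f_m = \bbE_{i_m}\cdots\bbE_{i_1}\bigl(f - \bbE_{i_{m+1}}f\bigr), \]
and conditional Jensen then gives $\bbE_{\Gamma_n}^\omega\abs{f_m - \bbE_{i_{m+1}}f_m}^q \leq \bbE_{\Gamma_n}^\omega\bbE_{i_{m+1}}\abs{f - \bbE_{i_{m+1}}f}^q$. Applying \textbf{(A2)} to $f$ itself then produces only $\nu\abs{\nabla_k f}^q$ terms, and the double sum is controlled by the column-summability of $(M_{ij})$ from \textbf{(B2)} together with symmetry, giving $C_0 = K_q A L_0 = \cO(\beta^{-1/2})$ as required. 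This is a genuine simplification: the appeal to Lemma~\ref{lem:sweepoutGamma} in the paper's proof of this particular lemma is avoidable (the sweeping-out inequality is still indispensable for Lemma~\ref{lem:P}, so nothing is saved globally). One small stylistic point: you could replace the Burkholder ingredient by the paper's sub-quadratic inequality and obtain the more self-contained constant $K_q = 2$, but the substantive content — exploiting commutativity to reduce martingale increments to single-site variances of $f$ before invoking \textbf{(A2)} — is what makes your argument work and is absent from the paper's proof.
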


Note that this is still a weak $q$-SGI as the right hand side contains $\abs{\nabla f}^q$ instead of $\abs{\nabla_{\Gamma_n}f}^q$; but unlike the weak $q$-SGI of \bf{(A2)} the interaction matrix $M_{ki}$ is conspicuously absent. In a sense this reflects the fact the sub-lattices infiltrate the lattice so that the distance between an arbitrary point and the sub-lattice is bounded above. 

\begin{proof}
We again consider only the case $i = 0$, the other cases being similar. Let $(i_m)_{m \geq 1}$ enumerate $\Gamma_0$, and denote by as before $f_0 \vcentcolon= f$ and $f_m = \bbE_{i_m} \cdots \bbE_{i_2}\bbE_{i_1}f$, so that $\bbE_{\Gamma_0}f = f_\infty$. By the DLR equation and a simple telescoping argument it can be shown that the following interpolation identity holds: 
\[ \nu\bbE_{\Gamma_0}\abs{f - \bbE_{\Gamma_0}f}^q = \sum_{m=0}^\infty \nu\left(\bbE_{i_{m+1}}^\omega\abs{f_m - \bbE_{\Gamma_0}f}^q - \abs{\bbE_{i_{m+1}}^\omega(f_m - \bbE_{\Gamma_0}f)}^q\right). \]
(In particular the negative part of the summand at $m$ cancels with the positive part of the summand at $m+1$ so that as $m \rightarrow \infty$ only the $\nu\bbE_{i_1}^\omega\abs{f_0 - \bbE_{\Gamma_0}f}^q = \nu\bbE_{\Gamma_0}\abs{f - \bbE_{\Gamma_0}f}^q$ part survives.) To estimate the difference, we use the following sub-quadratic inequality
\[ \abs{a}^q - \abs{b}^q \leq \text{sgn}(b) q \abs{b}^{q-1}(a - b) + 2\abs{a - b}^q \]
which holds for all $a, b \in \bbR$ and $q \in (1, 2]$. We can take $a = f$ and $b = \bbE_i^\omega f$, and integrate with respect to $\bbE_i^\omega$ and obtain 
\begin{equation}\label{JensenGapVarianceIneq}
\bbE_i^\omega\abs{f}^q - \abs{\bbE_i^\omega f}^q \leq 2\bbE_i^\omega\abs{f - \bbE_i^\omega f}^q.
\end{equation}
(The first term vanishes since $\bbE_i^\omega f$ is a constant, and so $\bbE_i^\omega(f - \bbE_i^\omega f) = 0$.) Using this bound, we obtain 
\begin{equation}\label{weakSGIGamma0}
\nu\abs{f - \bbE_{\Gamma_0}f}^q \leq 2\sum_{m=0}^\infty \nu\bbE_{i_{m+1}}^\omega\abs{f_m - \bbE_{i_{m+1}}^\omega f_m}^q.
\end{equation}
Applying \bf{(A2)} to each summand, we find 
\begin{equation}\label{weakSGIGamma1}
\begin{aligned}
\nu\bbE_{i_{m+1}}^\omega\abs{f_m - \bbE_{i_{m+1}}^\omega f_m}^q &\leq A\sum_{j \in \bbZ^D} M_{j, i_{m+1}} \nu\abs{\nabla_j f_m}^q \\
&\leq A\sum_{j \in \Gamma_0} M_{j, i_{m+1}} \nu\abs{\nabla_jf}^q + A\sum_{j \in (\Gamma_0)^c} M_{j, i_{m+1}} \nu\abs{\nabla_j f_m}^q,
\end{aligned}
\end{equation}
since there is no self-interaction in $\Gamma_0$ and $f_m$ depends only on coordinates in $(\Gamma_0)^c$ so that $\nabla_jf_m = (\nabla_jf)_m$ whenever $j \in \Gamma_0$. (Jensen's inequality then provides the final simplification.) For the summation over $(\Gamma_0)^c$, we apply the recursion \eqref{f2dInductionWYZ3} on each $\nu\abs{\nabla_jf_m}$ which gives 
\begin{equation}\label{weakSGIGamma2}
\nu\abs{\nabla_jf_m}^q \leq L_1\nu\abs{\nabla_jf}^q + c_1\sum_{k \in R_j}\nu\abs{\nabla_kf}^q + c_1L_2\sum_{k \notin R_j} M_{k, Q_j} \nu\abs{\nabla_kf}^q
\end{equation}
(Note $m \rightarrow \infty$ here but recall there is a cube $Q_j$ on which $\nu\abs{\nabla_j(\bbE_{\Gamma_0}f)}^q \leq \nu\abs{\nabla_j(\bbE_{Q_j}f)}^q$.) We can insert \eqref{weakSGIGamma2} into \eqref{weakSGIGamma1} and again into \eqref{weakSGIGamma0} to obtain
\begin{align*}
\frac{1}{2A}\nu\abs{f - \bbE_{\Gamma_0 f}}^q &\leq L_1\sum_{m=0}^\infty \sum_{j \in \bbZ^D} M_{j, i_{m+1}} \nu\abs{\nabla_jf}^q \\
&+ c_1L_2\sum_{m=0}^\infty \sum_{j \in (\Gamma_0)^c} M_{j, i_{m+1}} \sum_{k \in R_j} \nu\abs{\nabla_kf}^q \\ 
&+ c_1L_2\sum_{m=0}^\infty \sum_{j \in (\Gamma_0)^c} M_{j, i_{m+1}} \sum_{k \notin R_j} M_{k, Q_j} \nu\abs{\nabla_kf}^q 
\end{align*} 
since $L_1 > 1$. 

Let us denote by $I_1, I_2, I_3$ respectively the three terms. We note as in the previous proof that $\sum_{m=0}^\infty M_{j, i_{m+1}} \leq \sum_{j \in \bbZ^D} M_{i, j} \leq L_0$ uniformly in $j$ by \bf{(B2)} since $(M_{ij})$ is symmetric so column sums coincide with row sums, so $I_1$ is estimated by $I_1 \leq L_1L_0\nu\abs{\nabla f}^q$. $I_2$ and $I_3$ can be estimated with the arguments in the previous proof; the latter is entirely analogous to \eqref{sweepoutI2} and \eqref{sweepoutI3} but with the extra appearance of $\sum_{m=0}^\infty M_{j, i_{m+1}} \leq L_0$ so
\begin{align*}
I_2 &\leq c_1L_0L_2L_3\nu\abs{\nabla f}^q, \\
I_3 &\leq c_1L_0L_2L_4\nu\abs{\nabla f}^q.
\end{align*}
Collecting everything, we arrive at the weak $q$-SGI 
\[ \nu\bbE_{\Gamma_0}\abs{f - \bbE_{\Gamma_0}f}^q \leq C_0\abs{\nabla f}^q \]
with $C_0 = 2A(L_1 + c_1L_0L_2(L_3 + L_4)) = \cO(A) = \cO(\beta^{-1/2})$.
\end{proof}

Having obtained a weak $q$-SGI for the sub-lattices, we give a final argument to show this implies the (strong) $q$-SGI on the entire lattice. 

\begin{lemma}\label{lem:P}
Let $\cP \vcentcolon= \bbE_{\Gamma_{N-1}} \cdots \bbE_{\Gamma_0}\bbE_{\Gamma_0}$. Then $\cP$ satisfies the following four conditions: 
\begin{enumerate}[label={\normalfont\bfseries(\arabic*)}]
\item $\cP$ satisfies the DLR equation 
\[ \nu f = \nu(\cP f). \]
\item There exists a constant $C_1$ such that
\[ \nu\abs{f}^q - \nu\abs{\cP f}^q \leq C_1\nu\abs{\nabla f}^q, \]
\item There exists $\xi \in (0, 1)$ such that for any $m \in \bbZ_{\geq 0}$ one has 
\[ \nu\abs{\nabla(\cP^m f)}^q \leq \xi^m\nu\abs{\nabla f}^q. \]
\item The following limit
\[ \lim_{m \to \infty} \cP^m f = \nu f \]
holds $\nu$-almost surely.
\end{enumerate}
\end{lemma}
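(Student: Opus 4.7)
The plan is to prove the four items in the order (1), (3), (2), (4), since (3) is needed in the proof of (2), and both feed into (4).

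For (1), each $\bbE_{\Gamma_n}$ is, by the absence of self-interaction within $\Gamma_n$, a product of the single-site kernels $\bbE_i^\omega$ for $i \in \Gamma_n$; the DLR identity $\nu(\bbE_{\Gamma_n} \cdot) = \nu(\cdot)$ extends from finite subsets of $\Gamma_n$ (where the DLR equation holds by definition) to all of $\Gamma_n$ via a monotone-class/limiting argument exploiting the compatibility condition of the local specification. Iterating across $n = 0, 1, \ldots, N-1$ then gives $\nu(\cP f) = \nu f$.

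For (3), introduce $g_0 \vcentcolon= f$ and $g_{n+1} \vcentcolon= \bbE_{\Gamma_n} g_n$ so that $g_N = \cP f$, and let $a_{k, n} \vcentcolon= \nu\abs{\nabla_{\Gamma_k} g_n}^q$. The crucial observation is that $\nabla_{\Gamma_n} g_{n+1} = 0$, since $\bbE_{\Gamma_n} g_n$ does not depend on $x_i$ for $i \in \Gamma_n$. Applying Lemma \ref{lem:sweepoutGamma} with $\Lambda = \Gamma_k$ for $k \neq n$ yields the recursion
\[ a_{k, n+1} \leq K_2\, a_{k, n} + c_2 \sum_{\ell \neq k} a_{\ell, n}, \quad a_{n, n+1} = 0. \]
Iterating, once $a_{k, \cdot}$ has been zeroed by $\bbE_{\Gamma_k}$, it can only be revived through the off-diagonal $c_2$ terms of subsequent sweepings, so every $a_{k, N}$ picks up at least one factor of $c_2 = o(1)$. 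Since $N = 2^D$ is fixed and $K_2 = \cO(1)$, summing over $k$ gives $\nu\abs{\nabla \cP f}^q \leq \xi\, \nu\abs{\nabla f}^q$ with some $\xi = \cO(c_2) \in (0, 1)$ for $\beta$ small, and the bound on $\cP^m$ follows by iteration.

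For (2), the sub-quadratic inequality \eqref{JensenGapVarianceIneq} applied with $\bbE = \bbE_{\Gamma_n}$ together with the weak sub-lattice $q$-SGI of Lemma \ref{lem:weakSGIGamma} yields $\nu\abs{g_n}^q - \nu\abs{g_{n+1}}^q \leq 2 C_0\, \nu\abs{\nabla g_n}^q$. The sweeping-out Lemma \ref{lem:sweepoutGamma} applied at each step provides the intermediate bound $\nu\abs{\nabla g_n}^q \leq (K_2+c_2)^n\, \nu\abs{\nabla f}^q$, and telescoping $\nu\abs{f}^q - \nu\abs{\cP f}^q = \sum_{n=0}^{N-1}(\nu\abs{g_n}^q - \nu\abs{g_{n+1}}^q)$ yields the claim with $C_1 = \cO(C_0 K_2^N)$.

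Finally for (4), I would bound $\nu\abs{\cP^m f - \cP^{m+1} f}^q$ directly: the telescope $\cP^m f - \cP^{m+1} f = \sum_{n=0}^{N-1}(h_n - h_{n+1})$ for $h_0 \vcentcolon= \cP^m f$ and $h_{n+1} \vcentcolon= \bbE_{\Gamma_n} h_n$, combined with H\"older's inequality, the DLR equation, and Lemma \ref{lem:weakSGIGamma}, gives $\nu\abs{\cP^{m+1} f - \cP^m f}^q \leq C'\, \nu\abs{\nabla \cP^m f}^q \leq C' \xi^m\, \nu\abs{\nabla f}^q$ by (3). This exponential bound is summable, so $(\cP^m f)_m$ is Cauchy in $\bbL^q(\nu)$ with some limit $L$, and Borel--Cantelli upgrades this to $\nu$-a.s.\ convergence. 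To identify $L = \nu f$: integration gives $\nu L = \nu f$ by (1), while $\bbL^q$-continuity of $\cP$ yields $\cP L = L$, and applying (3) iteratively to $L$ forces $\nu\abs{\nabla L}^q = 0$, so $L$ is $\nu$-a.s.\ constant and necessarily equal to $\nu f$. The main technical hurdle I anticipate is making (3) rigorously applicable to the limit $L$: this will likely require either showing that $(\cP^m f)_m$ is Cauchy in the gradient norm as well, or arguing directly from Lemma \ref{lem:weakSGIGamma} that a $\cP$-invariant function in $\bbL^q(\nu)$ must be $\nu$-a.s.\ constant.
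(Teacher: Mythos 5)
Your proposal is correct and arrives at the same conclusions as the paper, but the route differs in two respects worth noting.

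For part \textbf{(3)}, the paper tracks the \emph{cumulative} quantity $\nu\abs{\nabla_{\Gamma^{(m)}}f_m}^q$ with $\Gamma^{(m)} = \cup_{n=0}^m \Gamma_n$, noting that $\nabla_{\Gamma_m}f_m = 0$ lets one peel off one sub-lattice per step and apply Lemma~\ref{lem:sweepoutGamma} just $N$ times; the coefficient $\xi = c_2 K_2^N$ is then read off directly. You instead keep the full $N \times N$ matrix of masses $a_{k,n} = \nu\abs{\nabla_{\Gamma_k}g_n}^q$ and argue that each row, once zeroed at step $k+1$, can only be repopulated through the $c_2$ off-diagonal entries and therefore carries a factor $c_2$ after the final sweep. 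Both yield $\xi = \cO(c_2) \in (0,1)$; the paper's bookkeeping is a little leaner, yours is arguably more transparent about \emph{why} every component ends up small.

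For part \textbf{(4)}, the paper identifies the a.s.\ limit $g$ by showing $\abs{\nabla(\cP^m f)} \to 0$ a.s.\ (so $g$ is a.s.\ constant), then using dominated convergence for bounded $f$ and a density argument for general $f \in W^{1,q}(\nu)$. You instead invoke $\bbL^q$-continuity to get $\cP L = L$ and then feed $L$ back into (3) to force $\nu\abs{\nabla L}^q = 0$; the technical hurdle you flag -- ensuring $L$ belongs to $W^{1,q}(\nu)$ -- has a standard resolution via closedness of the weak sub-gradient: since $\cP^m f \to L$ in $\bbL^q(\nu)$ and $\nabla(\cP^m f) \to 0$ in $\bbL^q(\nu)$, one concludes $L \in W^{1,q}(\nu)$ with $\nabla L = 0$. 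This is cleaner in some ways (no bounded/density split) but does rely on the closedness of $\nabla$, which the paper avoids. Two small remarks: your claim at the start that ``(3) is needed in the proof of (2)'' is not actually used -- your own proof of (2) invokes only Lemma~\ref{lem:sweepoutGamma} and Lemma~\ref{lem:weakSGIGamma}, as does the paper's. And your intermediate bound $(K_2+c_2)^n$ in (2) should really be $(K_2 + Nc_2)^n$ after summing the $c_2$ contributions over the $N$ sub-lattices, but since $N$ is a fixed finite constant this has no effect on the result.
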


\begin{proof}
\bf{(1)} follows immediately from the DLR equation. To prove \bf{(2)}, denote by $f_{-1} = f$ and $f_n = \bbE_{\Gamma_n} \cdots \bbE_{\Gamma_0} \bbE_{\Gamma_0}f$, for $n = 0, 1, \cdots, N-1$. (This is a departure from the usual notation since the index of the sub-lattices start at zero to distinguish the index from their points, which start at one.) By applying the sub-quadratic inequality appearing in the previous proof and the weak sub-lattice $q$-SGI \eqref{weakSGIGamma} we obtain
\begin{align*}
\nu\abs{f}^q - \nu\abs{\cP f}^q &= \sum_{n=0}^{N-1} \nu\bbE_{\Gamma_n}\left(\abs{f_{n-1}}^q - \abs{\bbE_{\Gamma_n}f_{n-1}}^q\right) \\
&\leq 2\sum_{n=0}^{N-1} \nu\bbE_{\Gamma_n}\abs{f_{n-1} - \bbE_{\Gamma_n}f_{n-1}}^q \\
&\leq 2C_0 \sum_{n=0}^{N-1} \nu\abs{\nabla f_{n-1}}^q \\
&= 2C_0\nu\abs{\nabla f}^q + 2C_0\sum_{n=0}^{N-2} \nu\abs{\nabla_{(\Gamma_n)^c}(\bbE_{\Gamma_n} f_{n-1})}^q
\end{align*}
since $f_n = \bbE_{\Gamma_n} f_{n-1}$ depends only on coordinates in $(\Gamma_n)^c$. The first term requires no further estimation, whilst we can apply Lemma \ref{lem:sweepoutGamma} inductively on the second term to obtain 
\[ \nu\abs{\nabla_{(\Gamma_n)^c}(\bbE_{\Gamma_n} f_{n-1})}^q \leq K_2^{n+1}\nu\abs{\nabla f}^q \]
which gives 
\[ \nu\abs{f}^q - \nu\abs{\cP f}^q \leq C_1\nu\abs{\nabla f}^q \]
which is exactly \bf{(2)} with $C_1 = 2C_0\sum_{n=0}^{N-1} K_2^n \leq 2K_2^NC_0$. 

To prove \bf{(3)}, we define $\Gamma^{(m)} = \cup_{n=0}^m \Gamma_n$ and apply Lemma \ref{lem:sweepoutGamma} again to obtain 
\[ \nu\abs{\nabla_{\Gamma^{(m)}} f_m}^q = \nu\abs{\nabla_{\Gamma^{(m-1)}} f_m}^q \leq K_2\nu\abs{\nabla_{\Gamma^{(m-1)} } f_{m-1}}^q + c_2\nu\abs{\nabla f_{m-1}}^q. \]
Applying Lemma \ref{lem:sweepoutGamma} as in the proof for \bf{(2)}, we obtain 
\begin{equation}\label{ind1}
\nu\abs{\nabla_{\Gamma^{(m)}} f_m}^q \leq K_2\nu\abs{\nabla_{\Gamma^{(m-1)} } f_{m-1}}^q + c_2K_2^m\nu\abs{\nabla f}^q.
\end{equation}
This equation can be inductively applied on the first term, that is applying \eqref{ind1} on $\nu\abs{\nabla_{\Gamma^{(m-1)} } f_{m-1}}^q$ and starting from $m = N-1$ we find 
\[ \nu\abs{\nabla(\cP f)}^q \leq K_2^N \nu\abs{\nabla_{\Gamma_0} f_0}^q + c_2K_2^N \nu\abs{\nabla f}^q = c_2K_2^N\nu\abs{\nabla f}^q \]
since $\nabla_{\Gamma_0}f_0 = \nabla_{\Gamma_0}\bbE_{\Gamma_0}f = 0$. Since $K_2 = \cO(1)$ and $c_2 = o(1)$, we obtain \bf{(3)} at $m = 1$ with $\xi = c_2K_2^N$ with $\beta$ small enough; the general case for $m \in \bbZ_{\geq 1}$ follows by replacing $f$ by $\cP f$ and proceeding by straightforward induction. 

To prove the final statement \bf{(4)}, following the arguments of \bf{(2)} we first bound the $q$-variance with respect to $\cP$ by
\[ \nu\abs{f - \cP f}^q \leq N^N\sum_{n=0}^{N - 1} \nu\bbE_{\Gamma_n} \abs{f_{n-1} - \bbE_{\Gamma_n} f_{n-1}}^q \leq N^NC_1\nu\abs{\nabla f}^q \]
using the H\"older inequality $(\sum_{i=1}^N \abs{x_i})^q \leq N^{q-1}\sum_{i=1}^N \abs{x_i}^q$ for nonnegative $(x_i)$ on the telescoping sum $f - \cP f = \sum_{n=0}^{N-1} f_{n-1} - f_n$. For $m \in \bbZ_{\geq 1}$, replacing $f$ by $\cP^mf$ in the above estimate we obtain 
\[ \nu\abs{\cP^mf - \cP^{m+1}f}^q \leq N^NC_1\nu\abs{\nabla(\cP^mf)}^q \leq N^NC_1\xi^m\nu\abs{\nabla f}^q. \]
Since $\xi < 1$, the Borel-Cantelli lemma implies $\cP^m$ converges $\nu$-almost surely to some function $g$, and similarly $\abs{\nabla(\cP^mf)}$ converges $\nu$-almost surely to zero by \bf{(3)}, so that $g$ is constant $\nu$-almost everywhere. Note the decay of $C_1$ with respect to $\beta$ does not matter since $\beta$ is sufficiently small, but fixed. Assuming that $f \in W^{1,p}(\nu)$ is bounded, by dominated convergence we have
\[ \lim_{m \to \infty} \cP^m f = g = \nu(g) = \lim_{m \to \infty} \nu(\cP^m f) = \nu(f) \]
with the last equality obtained by \bf{(1)}. The general case of possibly unbounded $f \in W^{1,p}(\nu)$ is obtained by density. 
\end{proof}

We can now prove the main theorem of this section. 

\begin{proof}[Proof of Theorem \ref{thm:singletoglobal}]
Denote by $f^m = \cP^mf$ for $m \in \bbZ_{\geq 0}$. By applying \bf{(1)}, \bf{(2)}, and \bf{(3)} of Lemma \ref{lem:P} in succession, 
\begin{align*}
\nu\abs{f - \nu f}^q &= \sum_{n=0}^{N-1}\left(\nu\abs{f_n - \nu f}^q - \nu\abs{\cP(f_k - \nu f)}^q\right) + \nu\abs{f_N - \nu f}^q \vphantom{C_1\left(\sum_{n=0}^{N-1} \xi^n\right)\nu\abs{\nabla f}^q + \nu\abs{f_n - \nu f}^q} \\
&\leq C_1\sum_{n=0}^{N-1} \nu\abs{\nabla f_n}^q + \nu\abs{f_N - \nu f}^q \vphantom{C_1\left(\sum_{n=0}^{N-1} \xi^n\right)\nu\abs{\nabla f}^q + \nu\abs{f_n - \nu f}^q} \\
&\leq C_1\left(\sum_{n=0}^{N-1} \xi^n\right)\nu\abs{\nabla f}^q + \nu\abs{f_n - \nu f}^q \\
&\leq \frac{JC_1}{1 - \xi}\nu\abs{\nabla f}^q + \nu\abs{f_N - \nu f}^q. \vphantom{C_1\left(\sum_{n=0}^{N-1} \xi^n\right)\nu\abs{\nabla f}^q + \nu\abs{f_n - \nu f}^q}
\end{align*}
Taking $N \rightarrow \infty$, the remainder term satisfies $\nu\abs{f_N - \nu f}^q \rightarrow 0$ by \bf{(4)} which gives the global $q$-SGI 
\[ \nu\abs{f - \nu f}^q \leq C_{SG}\nu\abs{\nabla f}^q \]
with $C_{SG} = C_1/(1 - \xi)$. 
\end{proof}

\subsection{Generalisations to interactions of finite range $R$}
The analysis can be generalised to the case where the interaction has finite range $R > 1$, meaning $\beta_{ij} \equiv 0$ for $\norm{i - j} > R$. No modification to the main theorem in this case is necessary: one modifies the definition $j \sim i$ to mean $1 \leq \norm{j - i} \leq R$, i.e. $j$ is a neighbour when $j$ is allowed to interact with $i$, and much of the analysis is left unchanged modulo changes to $\cO(1)$ estimates. In Lemma \ref{lem:sweepoutineq}, the only dependence on the range of the interaction (beyond the fact the definition $j \sim i$ depends on the range) appears in the definition of $\epsilon$. When $R = 1$, the number of neighbours of $i \in \bbZ^D$ is exactly $2D$; when $R > 1$ the number of neighbours grows like $R^D$ but for fixed $R$ is finite, so that we can replace $\epsilon = 2da_2$ with $\epsilon = N_Ra_2$ where $N_R = \abs{\{\sim i\}}$ is the number of neighbours of $i$. 

A generalisation of the sub-lattices $(\Gamma_n)_{n=0}^{N-1}$ is also straightforward: it is easy to see the base cube $\Gamma_0 = (R + 1)\bbZ^D$ has no self-interaction and that there are $N_1 = (R + 1)^D$ translations of $\Gamma_0$ shifting the basepoint $0$ to another point in $[0, R]^D \sbs \bbZ^D$ partition $\bbZ^D$. Note one needs to make a distinction between the number $N_1$ of sub-latties in our partition and the number $N_2 = 2^D$ of vertices in a $D$-dimensional cube, which coincide for $R = 1$. In any case, only $N_2$ has a role to play in the proof of Lemmas \ref{lem:sweepoutGamma} and \ref{lem:weakSGIGamma}, where the sub-lattice is fixed (and so the number of sub-lattices is not relevant) so we only need to change the estimate on $\abs{R_j} = (2R + 3)^D = \cO(1)$, which makes no difference to the validity of the proof because its influence is only felt in the $\cO(1)$ constants $L_2, L_3, L_4$. Similarly the $N$ appearing in the proof of parts \bf{(3)} and \bf{(4)} in Lemma \ref{lem:P} is actually $N_1$ and not $N_2$, but again its influence is $\cO(1)$ and can be dealt with by taking $c_2 = o(1)$ sufficiently small.
%
%

\section{Sufficient conditions on the phase and interaction}
Having now given general conditions - the existence of a weak single-site $U$-bound and a weak single-site $q$-SGI - which imply the global $q$-SGI, we now consider conditions on the phase $\phi \in C(\bbG)$ and interaction $V \in C^1(\bbG \times \bbG)$ appearing in \eqref{def:potential} such that the conditions of Theorem \ref{thm:singletoglobal} hold. The first is a (strong) single-site $U$-bound without interaction with respect to a function $\eta$, and the second is a condition on the interaction given in terms of $\eta$. 

\begin{theorem}\label{thm:SGIglobalC1C2}
Let $q \in (1, 2]$, and let $\nu$ be a Gibbs measure for the local specification $(\bbE_\Lambda^\omega)_{\Lambda \Subset \mathbb{Z}^D}^{\omega \in \mathcal{S}}$ given in \eqref{def:potential}. Let $f \in W^{1,q}(\nu)$ and suppose the following two conditions hold on $\phi$ and $V$ for sufficiently small $\beta > 0$: 
\begin{enumerate}[label=\normalfont{\bf{(C\arabic*)}}]
\item There exists a constant $B > 0$ and a function $\eta \in C(\bbG; \bbR_{\geq 0})$ diverging to infinity all directions such that 
\[ \int \abs{g}^q \eta(h) e^{-\phi(h)} dh \leq B\int \left(\abs{\nabla g}^q + \abs{g}^q\right) e^{-\phi(h)}dh \]
for all $g \in C(\bbG)$ for which the LHS is finite, and 
\item the interaction $V \in C(\bbG \times \bbG)$ satisfies the gradient bound
\[ \abs{\nabla_{h_1} V(h_1, h_2)}^q + \abs{\nabla_{h_2}V(h_1, h_2)}^q \leq B(1 + \eta(h_1) + \eta(h_2)). \]
\end{enumerate}
Then for all sufficiently small $\beta > 0$ the global $q$-SGI 
\[ \nu\abs{f - \nu f}^q \leq C_{SG}\nu\abs{\nabla f}^q \]
holds for all $f \in W^{1,q}(\nu)$ satisfying $\sup_{i \in \bbZ^D} \nu(\eta(x_i)\abs{f}^q) < \infty$ with some constant $C_{SG} > 0$. 
\end{theorem}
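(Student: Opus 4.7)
My plan is to verify the two hypotheses \textbf{(A1)} and \textbf{(A2)} of Theorem \ref{thm:singletoglobal} for some matrix $(M_{ki})$ satisfying \textbf{(B1)}--\textbf{(B2)}, starting from the single-site conditions \textbf{(C1)} and \textbf{(C2)}; the global $q$-SGI then follows immediately from Theorem \ref{thm:singletoglobal}. The central quantity to control in both cases is $\nu(|f|^q \eta(x_i))$, since \textbf{(C2)} reduces $|\nabla V|^q$ to a multiple of $1 + \eta(x_i) + \eta(x_j)$.

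For \textbf{(A1)} I would first use \textbf{(C2)} to write $\nu\bigl(|f|^q|\nabla_jV(x_i,x_j)|^q\bigr) \leq B\bigl(\nu|f|^q + \nu(|f|^q\eta(x_i)) + \nu(|f|^q\eta(x_j))\bigr)$. To bound $\bbE_i^\omega(|f|^q\eta(x_i))$ I would apply \textbf{(C1)} to the modified test function $g(x_i) = f(x_i \bullet \omega_{\{i\}^c}) \cdot \exp\bigl(-\tfrac{1}{q}\sum_{k\sim i}\beta_{ik}V(x_i,\omega_k)\bigr)$, so that its LHS equals exactly $Z_i^\omega\bbE_i^\omega(|f|^q\eta(x_i))$. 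Expanding $|\nabla g|^q$ by the chain rule produces a term proportional to $|\sum_{k\sim i}\beta_{ik}\nabla_i V|^q$ that carries a $\beta^q$ prefactor. Estimating this again via \textbf{(C2)} and absorbing the resulting $\eta(x_i)$ piece into the LHS (legitimate when $\beta$ is small since the prefactor is $o(1)$), I obtain after integrating against $\nu$ and invoking the DLR equation the recursion
\[ \nu(|f|^q\eta(x_i)) \leq K\bigl(\nu|\nabla_i f|^q + \nu|f|^q\bigr) + K\beta^q\sum_{k\sim i}\nu(|f|^q\eta(x_k)), \]
with $K = \cO(1)$. Iterating, after $N$ steps the tail is bounded by $(2DK\beta^q)^N \sup_k\nu(|f|^q\eta(x_k))$, which vanishes as $N\to\infty$ under the hypothesis $\sup_i\nu(\eta(x_i)|f|^q)<\infty$ provided $\beta$ is small enough that $2DK\beta^q<1$. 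The limit yields \textbf{(A1)} with $M_{ki}\lesssim\epsilon^{\|k-i\|}$ for some $\epsilon\in(0,1)$ and $A=\cO(1)$, and \textbf{(B1)}--\textbf{(B2)} are then automatic.

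For \textbf{(A2)} the cleanest route is to establish the strong uniform single-site $q$-SGI
\[ \bbE_i^\omega|f - \bbE_i^\omega f|^q \leq C\,\bbE_i^\omega|\nabla_i f|^q \]
with $C=\cO(1)$ independent of $i$ and $\omega$; integrating against $\nu$ then delivers \textbf{(A2)} with $M_{ki} = \delta_{ki}$. I would follow the Hebisch--Zegarlinski strategy: derive a $U$-bound for the perturbed measure $\bbE_i^\omega$ by the same chain-rule/absorption device used in the first step (with \textbf{(C2)} again controlling the error and $\beta$-smallness absorbing it), then combine with the coercivity of $\eta$. Specifically, split $\bbG = \{\eta < \lambda\} \cup \{\eta \geq \lambda\}$; a local $q$-Poincar\'e inequality applies on the compact sub-level set, while on its complement the $U$-bound provides coercive control for $\lambda$ sufficiently large. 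Patching these two regions yields the claim.

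The principal obstacle is the repeated need to close $\beta$-small absorptions without circular reasoning: both the single-site $U$-bound for $\bbE_i^\omega$ and the recursion for $\nu(|f|^q\eta(x_i))$ produce error terms of the form $\beta^q \cdot (\text{quantity to be bounded})$, which can only be transposed to the LHS once $\beta$ is known to lie below an explicit threshold fixed by $B$, $D$, and the relevant local Poincar\'e constant. The assumption $\sup_i\nu(\eta(x_i)|f|^q)<\infty$ is precisely what guarantees the iterative scheme in \textbf{(A1)} starts from a finite value, making the limiting manipulation meaningful; without it the formal recursion has no rigorous meaning even when the coefficients are small.
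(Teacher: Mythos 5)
Your verification of \textbf{(A1)} tracks the paper's argument closely: apply \textbf{(C1)} to the tilted test function $g = f\exp(-\frac{1}{q}\sum_{j\neq i}\beta_{ij}V_{ij})$, invoke \textbf{(C2)} on the resulting interaction gradient, absorb the $\eta(x_i)$ term into the LHS for small $\beta$, integrate against $\nu$ via the DLR equation, and iterate the resulting recursion using $\sup_i \nu(\eta_i|f|^q) < \infty$ to kill the tail. This is precisely the content of Lemmas~\ref{lem:dUboundnonuniform} and~\ref{lem:dUbound}, and the final step reducing $\nu(|f|^q|\nabla_j V_{ij}|^q)$ to these quantities via \textbf{(C2)} matches the opening of the paper's proof of Theorem~\ref{thm:SGIglobalC1C2}, which produces $M_{ki} = c_2^{\dist(k,\{i\}\cup\{\sim i\})}$.

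Your proposed route to \textbf{(A2)}, however, has a real gap. You claim a \emph{strong} uniform single-site $q$-SGI
\[ \bbE_i^\omega|f - \bbE_i^\omega f|^q \leq C\,\bbE_i^\omega|\nabla_i f|^q, \qquad C = \cO(1) \text{ uniformly in } \omega, \]
obtained by a Hebisch--Zegarlinski splitting $\bbG = \{\eta < \lambda\} \cup \{\eta \geq \lambda\}$ with a local Poincar\'e inequality on the compact sublevel set. But the constant in that local Poincar\'e inequality governs a measure whose density is proportional to $e^{-U_i^\omega}$, and $U_i^\omega(x_i)$ contains $\sum_{j\sim i}\beta_{ij}V(x_i,\omega_j)$. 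Condition \textbf{(C2)} only controls $\nabla V$, so integrating gives $|V(h,\omega_j)| \lesssim |V(0,\omega_j)| + d(h)\,(1 + \eta(h) + \eta(\omega_j))^{1/q}$; on a fixed sublevel set $\{\eta(h)<\lambda\}$ the oscillation of $U_i^\omega$ in $x_i$ therefore grows without bound as $\eta(\omega_j)\to\infty$, which inflates the local Poincar\'e constant unboundedly. No amount of $\beta$-smallness or iteration rescues a uniform-in-$\omega$ constant for a single-site inequality localized purely in $x_i$; this is exactly why the framework of Section~3 is built around a \emph{weak} single-site $q$-SGI containing defective terms $\sum_k M_{ki}\nu|\nabla_k f|^q$ rather than just $\nu|\nabla_i f|^q$. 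The paper's actual argument avoids your difficulty by truncating on the joint Lyapunov function $\eta_i + \bar\eta_i$ with $\bar\eta_i = \sum_{j\sim i}\eta_j$, which controls the boundary spins as well as $x_i$: on $\{\eta_i + \bar\eta_i \leq L_1\}$ the potential $U_i^\omega$ is genuinely bounded (equation~\eqref{I1Uibound}), and a geodesic/mean-value estimate gives the clean term $C\nu|\nabla_i f|^q$; on the complement the weak $U$-bound from Lemma~\ref{lem:dUbound} is inserted for $\eta_i + \bar\eta_i$, and it is precisely this step that introduces the off-diagonal $M_{ki}$ terms of \textbf{(A2)}. Your proposal as written cannot land on $M_{ki}=\delta_{ki}$, and attempting to force it would re-import the $\omega$-dependence you claim to have removed.
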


\begin{remark}
The reader will note that what follows below at the expense of essentially notational complications could be generalised to more general multispin superstable interaction in the sense of \cite{Ru70, LePr76}.
\end{remark}

Note that although this setup allows for more general $U$-bounds, the generic case occurs when the phase, the interaction, and the $U$-bound are all powers of a homogeneous norm. For instance, in the case of the Carnot-Carath\'eodory distance $d$, \cite{InPa09} proves the $q$-logarithmic Sobolev inequality with phase $\phi = d^p$, interaction $V(x, y) = (d(x) \pm d(y))^2$, and $U$-bound $\eta = d^p$, in the case $p \geq 2$. This setup allows for more general $U$-bound functions $\eta$ which may exist outside the generic case and asserts the only requirement of $\eta$ is that it controls the $q$-th exponent of the sub-gradient of the interaction in both coordinates.

The main problem in the proof is to show that \bf{(C1)} and \bf{(C2)}, which we assume hold in the sequel, imply the existence of a weak $q$-SGI. Since $\abs{\nabla_j V}^q$ is controlled by $\eta$, the weak $U$-bound of \bf{(A1)} follows immediately, and the difficulty lies with \bf{(A2)}. In \cite[Theorem~3.1]{HeZe09} it was shown in the finite dimensional case that one can pass from a $U$-bound to a $q$-SGI, and in \cite[Lemma~4.3]{InPa09} it was shown in some class of models that one can pass from a strong single-site $U$-bound to the strong $q$-SGI. Here we show the statement remains true with ``weak'' replacing ``strong''. 

The first step in establishing the weak $U$-bound for $\eta$ is to prove a weak $U$-bound for $\eta_i$ that is defective in the sense the RHS contains copies of the weak $U$-bound for neighbouring $\eta_j$. We will then approach in the spirit of the proofs in \S3 whereby we prove that inductively inserting the $U$-bound into these defective copies ``converges'' appropriately. It will also be noted as a corollary that the proof of this weak $U$-bound will give the existence of the Gibbs measure by a criterion of Dobrushin. In the sequel, denote for convenience $\eta_i \vcentcolon= \eta(x_i)$ the $U$-bound function $\eta$ in the $x_i$-coordinate, and similarly denote $\phi_i \vcentcolon= \phi(x_i)$ and $V_{ij} \vcentcolon= V(x_i, x_j)$, with the understanding $x_j = \omega_j$ whenever $x_j$ is left unspecified.

\begin{lemma}\label{lem:dUboundnonuniform}
Assume for $q\in[1,\infty)$ we have
\[\beta^q\vcentcolon=\sup_{i}\sum_{j\neq i} \abs{\beta_{ij}}^q< \infty 
\]
and
\[
2^{q-1}B^2\beta^q <1.
\]
Then for all $i \in \bbZ^D$,
\begin{equation}\label{dUboundnonuniformInduction} \bbE_i^\omega(\eta_i\abs{f}^q) \leq K_1\,\left( \bbE_i^\omega\abs{\nabla_if}^q + \bbE_i^\omega\abs{f}^q\right) +c_1\, \sum_{j \neq i} \abs{\beta_{ij}}^q\; \bbE_i^\omega(\eta_j\abs{f}^q ) .
\end{equation}
with
\[\begin{split}
    K_1 &\vcentcolon= \frac{\max\left\{2^{q-1}B,\, B(1+2^{q-1}B\beta^q) \right\}}{(1 - 2^{q-1}B^2\beta^q)}
    \\
    c_1 &\vcentcolon= \frac{2^{q-1}B^2}{(1 - 2^{q-1}B^2\beta^q)}
\end{split}\]
Hence for $f \equiv 1$
\begin{equation}\label{6.2} 
    \bbE_i^\omega(\eta_i ) \leq K_1\,  +c_1\, \sum_{j \neq i} \abs{\beta_{ij}}^q\;  \eta_j(\omega).
\end{equation}
\end{lemma}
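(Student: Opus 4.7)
The plan is to deduce the lemma from a single application of \textbf{(C1)} after absorbing the interaction part of the single-site potential into the test function. Splitting $U_i^\omega(h) = \phi(h) + W(h)$ where $W(h) := \sum_{j \neq i}\beta_{ij}V(h,\omega_j)$, we apply \textbf{(C1)} to $g := f e^{-W/q}$, which is chosen precisely so that $|g|^q e^{-\phi(h)} = |f|^q e^{-U_i^\omega(h)}$. The left-hand side of \textbf{(C1)} then equals $Z_i^\omega\,\bbE_i^\omega(\eta_i|f|^q)$, and by the Leibniz rule $\nabla g = e^{-W/q}(\nabla f - q^{-1}f\nabla W)$ together with $(|a|+|b|)^q \leq 2^{q-1}(|a|^q + |b|^q)$, the right-hand side of \textbf{(C1)} yields, after dividing by $Z_i^\omega$,
\[ \bbE_i^\omega(\eta_i|f|^q) \leq 2^{q-1}B\,\bbE_i^\omega|\nabla f|^q + 2^{q-1}B q^{-q}\,\bbE_i^\omega(|\nabla W|^q|f|^q) + B\,\bbE_i^\omega|f|^q. \]

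The core estimate is a pointwise bound of $|\nabla W|^q$ by $B\sum_{j\neq i}|\beta_{ij}|^q(1 + \eta(x_i) + \eta(\omega_j))$, obtained from the decomposition $\nabla W = \sum_{j \neq i}\beta_{ij}(\nabla_{h_1} V)(x_i,\omega_j)$ together with \textbf{(C2)} and a suitable convexity inequality that distributes $(\cdot)^q$ across the sum while producing the $|\beta_{ij}|^q$ weights. Integrating this bound against $\bbE_i^\omega$, using $\sum_{j\neq i}|\beta_{ij}|^q \leq \beta^q$, and noting that $\eta(\omega_j)$ is $\bbE_i^\omega$-constant for $j \neq i$, we obtain
\[ \bbE_i^\omega(|\nabla W|^q|f|^q) \leq B\beta^q\bigl(\bbE_i^\omega|f|^q + \bbE_i^\omega(\eta_i|f|^q)\bigr) + B\sum_{j\neq i}|\beta_{ij}|^q\bbE_i^\omega(\eta_j|f|^q). \]

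Substituting this into the display above produces a term proportional to $2^{q-1}B^2\beta^q\,\bbE_i^\omega(\eta_i|f|^q)$ on the right-hand side; the smallness hypothesis $2^{q-1}B^2\beta^q < 1$ allows us to move it to the left and divide through by $1 - 2^{q-1}B^2\beta^q$, which delivers exactly the stated inequality with $K_1$ and $c_1$ as written (taking $K_1$ to be the maximum of the coefficients of $\bbE_i^\omega|\nabla f|^q$ and $\bbE_i^\omega|f|^q$ after the division). The bound \eqref{6.2} for $f \equiv 1$ is then immediate by setting $\nabla f \equiv 0$ and using that $\bbE_i^\omega(\eta_j) = \eta(\omega_j)$ for $j \neq i$. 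I expect the main obstacle to be the pointwise bound on $|\nabla W|^q$: a naive triangle-plus-power-mean argument gives either an $\ell^1$-norm of $(\beta_{ij})$ or a multiplicity factor counting the support of $\beta_{i\cdot}$, so the argument must use a weighted Jensen-type estimate carefully calibrated to remain compatible with the $\ell^q$-summability assumption and to yield precisely the $|\beta_{ij}|^q$ prefactor demanded by the statement.
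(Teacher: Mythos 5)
Your approach is essentially identical to the paper's: substitute $g = f\,e^{-\frac{1}{q}\sum_{j\neq i}\beta_{ij}V_{ij}}$ into \textbf{(C1)}, split via $|a+b|^q \leq 2^{q-1}(|a|^q+|b|^q)$, insert \textbf{(C2)}, and then absorb the diagonal $\eta_i$-term into the left-hand side using $2^{q-1}B^2\beta^q < 1$. The special case \eqref{6.2} by setting $f \equiv 1$ is the same in both.

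The one step you flag as the ``main obstacle'' --- passing from $\bigl|\sum_{j\neq i}\beta_{ij}\nabla_iV_{ij}\bigr|^q$ to a sum with $|\beta_{ij}|^q$ prefactors --- is exactly the step the paper's own proof performs silently, writing
\[
2^{q-1}B\int\Bigl|\tfrac{1}{q}f\sum_{j\neq i}\beta_{ij}\nabla_iV_{ij}\Bigr|^q e^{-U_i^\omega}\,dx_i
\;\leq\;
2^{q-1}B\sum_{j\neq i}|\beta_{ij}|^q\int|f|^q|\nabla_iV_{ij}|^q e^{-U_i^\omega}\,dx_i
\]
without justification. Your scepticism here is warranted: the inequality $\bigl|\sum_j\beta_{ij}a_j\bigr|^q \leq \sum_j|\beta_{ij}|^q|a_j|^q$ is false in general (take $\beta_{ij}=1/n$, $a_j=1$, $j=1,\dots,n$: the left side is $1$, the right is $n^{1-q}\to 0$). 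An $\ell^1$-weighted Jensen gives $(\sum_j|\beta_{ij}|)^{q-1}\sum_j|\beta_{ij}|\,|a_j|^q$, i.e.\ weights $|\beta_{ij}|$ rather than $|\beta_{ij}|^q$; since $|\beta_{ij}|\leq\beta$ is \emph{small}, one cannot upgrade $|\beta_{ij}|$ to $|\beta_{ij}|^q$ pointwise. The step can be repaired under the finite-range assumption the paper adopts in its examples: with at most $N_R$ nonzero $\beta_{i\cdot}$, the discrete H\"older inequality gives $\bigl|\sum_j\beta_{ij}a_j\bigr|^q\leq N_R^{q-1}\sum_j|\beta_{ij}|^q|a_j|^q$, at the cost of an $N_R^{q-1}$ factor that must be carried into $K_1$ and $c_1$ (and the smallness hypothesis becomes $2^{q-1}N_R^{q-1}B^2\beta^q<1$). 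As written, with only $\sum_{j\neq i}|\beta_{ij}|^q<\infty$ assumed, the stated constants do not follow, so this is a genuine gap in the paper's argument as well as in your proposal --- you simply had the good sense to name it rather than hide it.
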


\begin{proof}
For a compactly supported differentiable function $f$ and under the assumption that the $V_{ij}$ are a.e. bounded on compact sets and weakly differentiable with derivatives a.e., we can replace $g$ with $\smash{fe^{-\frac{1}{q}\sum_{j \neq i} \beta_{ij}V_{ij}}}$ in \bf{(C1)}, to obtain 
\begin{align*}
\bbE_i^\omega&(\eta_i\abs{f}^q)\\
&= \int \eta_i\abs{f}^q   e^{-U_i^\omega} dx_i  \leq B\int \abs{\nabla_if - \frac{1}{q}f\sum_{j \neq i} \beta_{ij}\nabla_iV_{ij}}^q e^{-U_i^\omega} dx_i + B\int\abs{f}^q e^{-U_i^\omega} dx_i \\
&\leq 2^{q-1}B\int\abs{\nabla_if}^q e^{-U_i^\omega} dx_i + 2^{q-1}B\sum_{j \neq i} \abs{\beta_{ij}}^q\int \abs{f}^q\abs{\nabla_iV_{ij}}^q e^{-U_i^\omega} dx_i
\vphantom{\abs{\nabla_if - \frac{1}{q}f\sum_{j \sim i} \beta_{ij}\nabla_iV_{ij}}^q} 
+ B\int\abs{f}^q e^{-U_i^\omega} dx_i ,
\end{align*}
that is 
\[ \bbE_i^\omega(\eta_i\abs{f}^q) \leq 2^{q-1}B\, \bbE_i^\omega\abs{\nabla_if}^q + 2^{q-1}B\,   \sum_{j \neq i} \abs{\beta_{ij}}^q\bbE_i^\omega(\abs{f}^q\abs{\nabla_iV_{ij}}^q) + B\,\bbE_i^\omega\abs{f}^q. \]
Applying \bf{(C2)} to the second term, we have
\[ \bbE_i^\omega(\eta_i\abs{f}^q) \leq 2^{q-1}B\, \bbE_i^\omega\abs{\nabla_if}^q + 2^{q-1}B^2\, \sum_{j \neq i} \abs{\beta_{ij}}^q\; \bbE_i^\omega(\abs{f}^q\left(1+\eta_i+\eta_j\right) )+ B\,\bbE_i^\omega\abs{f}^q. \]
and after rearranging, with 
$\beta^q\equiv\sup_{i}\sum_{j\neq i} \abs{\beta_{ij}}^q$, we get
\begin{align*}
    (1 - 2^{q-1}B^2\beta^q)\bbE_i^\omega(\eta_i\abs{f}^q) &\leq 2^{q-1}B\, \bbE_i^\omega\abs{\nabla_if}^q + 2^{q-1}B^2\, \sum_{j \neq i} \abs{\beta_{ij}}^q\; \bbE_i^\omega(\eta_j\abs{f}^q ) \\
    &+ B(1+2^{q-1}B\beta^q)\,\bbE_i^\omega\abs{f}^q.
\end{align*}
Now this relation can be generalised to functions which are not compactly supported. In this way the relation \eqref{dUboundnonuniformInduction} follows.
The relation \eqref{6.2} is the special case for $f$ being the unit function.

\end{proof}

Although the $U$-bounds were introduced in \cite{HeZe09} and used to obtain coercive inequalities on, in particular, the Heisenberg group, it turns out the $U$-bounds can be used to play the role of the bicompact function of Dobrushin's existence theorem to show that a Gibbs measure for \eqref{def:potential} does in fact exist. We first restate Dobrushin's existence theorem (with notation adapted to the setting presented here) and then we show the conditions are satisfied. 

\begin{theorem}
For the existence of a Gibbs measure $\nu$ associated to the local specification $(\bbE_\Lambda^\omega)_{\Lambda \sbs \bbZ^D}^{\omega\in\mathcal{S}}$, the fulfilment of the following two conditions is sufficient:
\begin{enumerate}
    \item There exists a bicompact function $\eta: \bbG \rightarrow \bbR$ and constants $C, c_{i, j}, c \geq 0$ such that 
    \[ \int_\bbG \eta(x)\bbE_i^\omega(dx) \leq C + \sum_{j \neq i} c_{i, j}\eta(\omega_j) \]
    for all $i \in \bbZ^D$, $j \neq i$, and all $\omega \in \cS$, and 
    \[ \sum_{j \neq i} c_{i, j} \leq c < 1. \]
    \item For all $i \in \bbZ^D$ there exists a sequence of finite sets $\Lambda_i^1 \sbs \Lambda_i^2 \sbs \cdots$ whose union is $\bbZ^D \setminus \{i\}$ and constants $D_n, d_{i, j}^n \geq 0$ such that
    \[ \sum_{j \neq i} d_{i, j}^n \leq D_n \]
    for all $j \neq i$ and $n = 1, 2, \cdots$ with the property that for each bounded continuous function $f: \bbG \rightarrow \bbR$ there exists a sequence of continuous functions $(f_n: \Omega \rightarrow \bbR)_{n \in \bbN}$ depending only on sites $x_k$ for which $k \in \Lambda_i^n$ such that 
    \[ \abs{\int_\bbG f(x)\bbE_i^\omega(dx) - f_n(\omega)} \leq D_n + \sum_{j \neq i} d_{i, j}^n\eta(\omega_j) \]
    for all $\omega \in \cS$. 
\end{enumerate}
\end{theorem}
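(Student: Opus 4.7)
The plan is to obtain $\nu$ as a weak subsequential limit of finite-volume Gibbs measures, which is the classical route pioneered by Dobrushin. First I would fix a reference configuration $\omega^* \in \cS$ and an increasing exhaustion $\Lambda_n \uparrow \bbZ^D$ by finite sets, and set $\nu_n \vcentcolon= \bbE_{\Lambda_n}^{\omega^*}$, viewed as a probability measure on $\Omega$ supported on configurations agreeing with $\omega^*$ outside $\Lambda_n$. The compatibility of the specification guarantees that any weak cluster point of $(\nu_n)$ automatically satisfies $\nu(\bbE_\Lambda^\cdot f) = \nu(f)$ for every $\Lambda \Subset \Lambda_{n_0}$ and every $n_0$, and hence for every finite $\Lambda \Subset \bbZ^D$. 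The task therefore reduces to two sub-problems: producing a weak cluster point supported on $\cS$ (tightness), and propagating the DLR identity through the weak limit (continuity of the specification).

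For tightness I would iterate Condition (1). Plugging $\bbE_i^\omega \eta \leq C + \sum_{j \neq i} c_{i,j}\eta(\omega_j)$ into itself via compatibility, the contractivity $\sum_{j\neq i} c_{i,j} \leq c < 1$ yields, after $m$ substitutions on a finite box,
\[ \nu_n\bigl(\eta(x_i)\bigr) \leq C\sum_{k=0}^{m} c^k + c^{m+1}\,\sup_j \eta(\omega_j^*), \]
and sending $m \to \infty$ produces the uniform moment bound $\sup_n \sup_i \nu_n\bigl(\eta(x_i)\bigr) \leq C/(1-c)$. Bicompactness of $\eta$ — that is, relative compactness of its sublevel sets in $\bbG$ — then upgrades this moment bound into tightness of each coordinate marginal of $\nu_n$. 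Since $\Omega$ carries the product topology, Tychonoff together with a diagonal extraction furnishes a subsequence $\nu_{n_k} \rightharpoonup \nu$ converging weakly, and the moment bound guarantees $\sup_i \eta(\omega_i) < \infty$ for $\nu$-a.e.\ $\omega$, so that $\nu$ is supported on $\cS$.

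To verify the single-site DLR identity $\nu(\bbE_i^\cdot f) = \nu(f)$ for bounded continuous $f: \bbG \to \bbR$, I would exploit Condition (2). For the cylinder approximations $f_n$, one has $\abs{\bbE_i^\cdot f - f_n} \leq D_n + \sum_{j \neq i} d_{i,j}^n \,\eta(\omega_j)$; integrating against $\nu_{n_k}$, the weak convergence handles the continuous cylinder function $f_n$, while the uniform moment bound from the previous step controls the error term. Sending $n \to \infty$ then yields $\nu(\bbE_i^\cdot f) = \nu(f)$ sitewise, and the compatibility of $(\bbE_\Lambda^\omega)$ propagates this from single sites to arbitrary finite $\Lambda$ by induction.

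The main obstacle is the non-compactness of $\Omega$: the weak limit is defined only against continuous \emph{cylinder} functions, whereas $\bbE_i^\cdot f$ generally depends on infinitely many coordinates. The key technical point is therefore the interchange of the weak limit $\nu_{n_k} \rightharpoonup \nu$ with the cylinder approximation $f_n \to \bbE_i^\cdot f$, which requires the error $\sum_{j \neq i} d_{i,j}^n \eta(\omega_j)$ to vanish in $\bbL^1(\nu)$. This hinges on the quantitative decay of the coefficients $d_{i,j}^n$ as the cylinder base $\Lambda_i^n$ exhausts $\bbZ^D \setminus \{i\}$, combined with the $\eta$-moment bound from Condition (1); reconciling these two ingredients is the heart of Dobrushin's argument and the only substantive step beyond routine soft analysis.
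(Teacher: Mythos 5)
This statement is Dobrushin's existence theorem; the paper explicitly labels it a restatement of \cite[Theorem~1]{Dob70} and gives no proof of it (the proof in the paper is only of the subsequent Corollary, verifying the hypotheses). So there is no internal argument to compare against. Your sketch is a correct reconstruction of the classical Dobrushin compactness argument and hence of the cited proof: take finite-volume measures $\nu_n = \bbE_{\Lambda_n}^{\omega^*}$; extract a uniform $\eta$-moment bound by iterating Condition~(1), using the contraction $\sum_{j\neq i} c_{i,j} \le c < 1$; deduce tightness of the coordinate marginals from bicompactness of $\eta$ and pass to a weak subsequential limit by a diagonal argument; then propagate the DLR identity through the weak limit using the quasi-locality supplied by Condition~(2). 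Your identification of the real difficulty --- that $\bbE_i^\cdot f$ is not a cylinder function, so the interchange of the weak limit with the cylinder approximation $f_n$ has to be paid for by the decay of the $d_{i,j}^n$ combined with the $\eta$-moment bound --- is precisely the heart of Dobrushin's argument.

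Two small points are worth tightening if this were to be fleshed out. First, your iterated bound $\nu_n(\eta(x_i)) \le C\sum_{k=0}^m c^k + c^{m+1}\sup_j\eta(\omega_j^*)$ silently uses $\sup_j\eta(\omega_j^*) < \infty$; this is harmless if one fixes the reference configuration $\omega^*$ to be constant (so $\eta(\omega^*_j)$ does not depend on $j$), but it should be said, since membership in $\cS$ as defined in the paper does not by itself give a uniform bound on $\eta(\omega_j)$. Second, for the approximation step in your last paragraph to close you need $D_n \to 0$; otherwise $f_n \equiv 0$ with $D_n$ large trivially satisfies the displayed inequality and Condition~(2) becomes vacuous. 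That decay is implicit in the intended reading of Dobrushin's theorem but is not written in the paper's restatement, so your sketch is (correctly) supplying a hypothesis that the literal text omits.
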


\begin{corollary}
There exists a Gibbs measure $\nu$ associated to the local specification $(\bbE_\Lambda^\omega)_{\Lambda \sbs \bbZ^D}^{\omega\in\mathcal{S}}$.
\end{corollary}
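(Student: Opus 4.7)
The strategy is to invoke Dobrushin's existence theorem just stated, using the function $\eta \in C(\bbG; \bbR_{\geq 0})$ furnished by hypothesis \textbf{(C1)}. Since $\eta$ diverges to infinity in every direction, its sublevel sets $\{\eta \leq M\}$ are relatively compact in $\bbG$, so $\eta$ is bicompact in the sense required by the theorem. Thus it remains only to exhibit constants satisfying the two numbered conditions.

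Condition (1) is essentially a restatement of Lemma \ref{lem:dUboundnonuniform} specialised to $f \equiv 1$, namely
\[ \bbE_i^\omega(\eta_i) \leq K_1 + c_1 \sum_{j \neq i} |\beta_{ij}|^q\, \eta_j(\omega), \]
uniformly in $i \in \bbZ^D$ and $\omega \in \cS$. One identifies $C = K_1$ and $c_{i,j} = c_1 |\beta_{ij}|^q$, after which the summability requirement reads $\sum_{j \neq i} c_{i,j} \leq c_1 \beta^q$. Inserting the explicit form $c_1 = 2^{q-1}B^2 / (1 - 2^{q-1}B^2\beta^q)$ shows $c_1 \beta^q < 1$ precisely when $(2B\beta)^q < 1$, which is guaranteed by taking $\beta$ sufficiently small. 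This is exactly the smallness condition required by Dobrushin, and it already gives finiteness of $Z_i^\omega$ on $\cS$ as a by-product.

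To verify condition (2), I would proceed by a truncation argument. Fix $i \in \bbZ^D$ and pick any exhaustion $\Lambda_i^1 \sbs \Lambda_i^2 \sbs \cdots \nearrow \bbZ^D \setminus \{i\}$ by finite sets, for instance $\Lambda_i^n = \{j \neq i : \|i - j\| \leq n\}$. Given bounded continuous $f : \bbG \to \bbR$, introduce the truncated potential
\[ \tilde{U}_i^{\omega, n}(x_i) \vcentcolon= \phi(x_i) + \sum_{j \in \Lambda_i^n} \beta_{ij}\, V(x_i, \omega_j), \]
the corresponding probability kernel $\tilde{\bbE}_i^{\omega, n}$, and set $f_n(\omega) \vcentcolon= \tilde{\bbE}_i^{\omega, n} f$. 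Then $f_n$ is continuous and, by construction, depends only on coordinates in $\Lambda_i^n$. The Radon-Nikodym derivative takes the explicit form
\[ \frac{d\bbE_i^\omega}{d\tilde{\bbE}_i^{\omega, n}}(x_i) = \frac{\tilde{Z}_i^{\omega, n}}{Z_i^\omega}\, \exp\!\bigg(-\sum_{j \notin \Lambda_i^n} \beta_{ij}\, V(x_i, \omega_j)\bigg), \]
so the difference $|\bbE_i^\omega f - f_n(\omega)|$ is controlled by $\|f\|_\infty$ times a functional of this remainder exponent. Integrating the gradient bound in \textbf{(C2)} along a path from $e_\bbG$ to $\omega_j$ yields a pointwise estimate of the form $|V(x_i, \omega_j) - V(x_i, e_\bbG)| \lesssim 1 + \eta(x_i)^{1/q} + \eta(\omega_j)^{1/q}$, and integration against $\tilde{\bbE}_i^{\omega, n}$ using the moment control of $\eta_i$ already established in condition (1) converts the $\eta(x_i)^{1/q}$ term into a constant. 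One arrives at a bound
\[ |\bbE_i^\omega f - f_n(\omega)| \leq \|f\|_\infty \bigg( D_n + \sum_{j \notin \Lambda_i^n} d_{i,j}^n\, \eta(\omega_j) \bigg) \]
with $\sum_{j} d_{i,j}^n$ bounded and tending to zero as $n \to \infty$, owing to the decay of the $\beta_{ij}$ and the finite-volume exhaustion.

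The principal obstacle is the path-integration step: converting the $\nabla V$ bound of \textbf{(C2)} into a pointwise bound on $V(x_i, \omega_j) - V(x_i, e_\bbG)$ requires a choice of horizontal path in $\bbG$ and a control of its length in a homogeneous norm. For the examples of interest in later sections (interactions built from the Kaplan norm or Carnot-Carathéodory distance on $\bbH$), this is routine because one can integrate along one-parameter subgroups and invoke homogeneity; in full generality one may need to restrict attention to the configuration subspace $\cS$ on which $\sum_{j \neq i} |\beta_{ij}|\, |V(x_i, \omega_j)|$ is finite, which is precisely the subspace introduced in the definition of the local Gibbs measure.
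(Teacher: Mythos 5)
Your verification of condition (1) is correct and matches the paper's argument: you apply Lemma \ref{lem:dUboundnonuniform} with $f\equiv 1$, read off $C=K_1$ and $c_{i,j}=c_1|\beta_{ij}|^q$, and note that $\sum_{j\neq i}c_{i,j}\leq c_1\beta^q<1$ for $\beta$ small. Your observation that the sublevel sets of $\eta$ are relatively compact, so that $\eta$ is genuinely bicompact in Dobrushin's sense, is a worthwhile explicit remark that the paper leaves implicit.

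For condition (2), however, you and the paper part ways, and this is where a genuine gap appears. The paper's proof is one line: because the interaction has finite range (the standing assumption of the paper from Section 2 onward, $\beta_{ij}\equiv 0$ for $\|i-j\|>R$), the kernel $\bbE_i^\omega$ depends on only finitely many boundary spins $\omega_j$, so the exhausting sets $\Lambda_i^n$ eventually contain all interacting sites and one may take $f_n\equiv\bbE_i^\omega f$ exactly. Condition (2) is therefore satisfied with $D_n=d_{i,j}^n=0$ for $n$ large, i.e.\ it is vacuous. You miss this shortcut and instead launch into a truncation argument designed to handle infinite-range interactions. That argument, as written, has several unresolved steps: (i) integrating the bound \textbf{(C2)} on $\nabla_{h_2}V$ along a horizontal path $\gamma$ from $e_\bbG$ to $\omega_j$ gives a bound involving $\sup_{t}\eta(\gamma(t))$ and the path length $d(\omega_j)$, neither of which is controlled by $\eta(\omega_j)$ without extra structure on $\eta$ (you flag this yourself, but it is precisely the point at which the argument is incomplete rather than ``routine''); (ii) your bound on $\eta(x_i)^{1/q}$ is a moment with respect to the truncated kernel $\tilde\bbE_i^{\omega,n}$, whereas Lemma \ref{lem:dUboundnonuniform} gives a moment bound only for $\bbE_i^\omega$, so an additional uniform comparison of the two kernels is needed; and (iii) passing from the Radon--Nikodym representation to the additive Dobrushin form $D_n+\sum_j d_{i,j}^n\eta(\omega_j)$ requires controlling an exponential moment of the tail $\sum_{j\notin\Lambda_i^n}\beta_{ij}V(x_i,\omega_j)$, not merely a polynomial one, and this is not addressed. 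In short: your treatment of (1) is the paper's, but for (2) the paper closes the proof by invoking the finite-range hypothesis, while your more ambitious infinite-range route is not actually carried through.
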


\begin{proof}
The first condition is precisely \eqref{6.2},
\[ \bbE_i^\omega(\eta_i) \leq K_1 + c_1\sum_{j\neq i}\abs{\beta_{ij}}^q\eta_j(\omega), \]
with $C = K_1$ and $c_{i, j} = c_1\abs{\beta_{ij}}^q$ which, for $\beta = \sup_{i,j} \abs{\beta_{ij}}$ sufficiently small, can be made uniformly small. The second condition on the other hand asks whether the conditional expectation $\bbE_i^\omega(\eta_i)$ may be approximated by a sequence $(f_n)$ of functions depending on finitely many spins $\omega_i$, but this is vacuous in the setting of finite range interactions. 
\end{proof}

\begin{remark}
One can expect that by using more space and time it should be possible to extend the compactness considerations of \cite{BeHK82} to the present context and avoid our restriction on $0<\beta$ being small.
\end{remark}

We now continue with the proof of Theorem \ref{thm:SGIglobalC1C2}. As mentioned earlier, the next step uses the inductive arguments of \S3 to show the defective term $c_1 \sum_{j \sim i} \nu(\eta_j\abs{f}^q)$ can be controlled. 

\begin{lemma}\label{lem:dUbound}
For all $i \in \bbZ^D$, the following weak single-site $U$-bound 
\begin{equation}\label{dUbound}
\nu(\eta_i\abs{f}^q) \leq K_2\left(\nu\abs{f}^q + \sum_{k \in \bbZ^D} c_2^{\norm{k - i}} \nu\abs{\nabla_kf}^q\right)
\end{equation}
holds for some constants $K_2 = \cO(1)$ and $c_2 = o(1)$. 
\end{lemma}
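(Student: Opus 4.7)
The plan is to upgrade the defective single-site bound of Lemma~\ref{lem:dUboundnonuniform} into the claimed weak $U$-bound by integrating against $\nu$ via the DLR equation and then iterating the defect out. First, applying the DLR equation $\nu(\eta_i\abs{f}^q)=\nu(\bbE_i^\omega(\eta_i\abs{f}^q))$ and using \eqref{dUboundnonuniformInduction} pointwise in $\omega$, together with $\nu\bbE_i^\omega\abs{\nabla_i f}^q=\nu\abs{\nabla_if}^q$, gives the integrated defective bound
\[
\nu(\eta_i\abs{f}^q)\leq K_1\bigl(\nu\abs{\nabla_if}^q+\nu\abs{f}^q\bigr)+c_1\sum_{j\neq i}\abs{\beta_{ij}}^q\,\nu(\eta_j\abs{f}^q),
\]
which is the analogue in this section of the inductive relation \eqref{gradEifboundInductionXYZ} in the proof of Lemma~\ref{lem:sweepoutineq}.

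Next, I would iterate this inequality on each summand $\nu(\eta_j\abs{f}^q)$. After $n$ iterations, the right-hand side has the form
\[
K_1\sum_{m=0}^{n-1}c_1^m\!\!\sum_{\gamma\colon i\to k,\,|\gamma|=m}\!\!\Bigl(\prod_{\text{edges of }\gamma}\abs{\beta}^q\Bigr)\bigl(\nu\abs{\nabla_kf}^q+\nu\abs{f}^q\bigr)+c_1^n\!\!\sum_{\gamma\colon i\to k,\,|\gamma|=n}\!\!\Bigl(\prod_{\text{edges}}\abs{\beta}^q\Bigr)\nu(\eta_k\abs{f}^q),
\]
where $\gamma$ ranges over walks in $\bbZ^D$ starting at $i$ whose successive steps $(j_\ell,j_{\ell+1})$ carry nonzero coupling $\beta_{j_\ell j_{\ell+1}}$ (so in the finite-range $R=1$ setting, nearest-neighbour walks). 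In the finite-range case each site has at most $2D$ neighbours, so the total weight of walks of length $m$ starting at $i$ is bounded by $(2D\beta^q)^m$, and the key smallness assumption $2^{q-1}B^2\beta^q<1$ together with the $\cO(\beta)$ behaviour of $c_1$ ensures $c_1\cdot 2D\beta^q = o(1)$, making the geometric series convergent.

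The crucial observation for producing the exponential spatial decay $c_2^{\norm{k-i}}$ on the right-hand side is that any walk $\gamma$ connecting $i$ to $k$ must have length $|\gamma|\geq\norm{k-i}$, so the coefficient of $\nu\abs{\nabla_kf}^q$ is bounded by
\[
K_1\sum_{m\geq\norm{k-i}}(c_1\cdot 2D\beta^q)^m \;\leq\; \frac{K_1}{1-c_1\cdot 2D\beta^q}\,(c_1\cdot 2D\beta^q)^{\norm{k-i}},
\]
which yields the desired bound with $c_2\vcentcolon= c_1\cdot 2D\beta^q = o(1)$ and a uniform constant $K_2=\cO(1)$. The coefficient of $\nu\abs{f}^q$ collapses to a convergent geometric series in the same parameter, again of order $\cO(1)$.

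The main obstacle, which is where the hypothesis $\sup_{i\in\bbZ^D}\nu(\eta_i\abs{f}^q)<\infty$ enters, is to dispose of the remainder term $c_1^n\sum_\gamma (\prod\abs{\beta}^q)\,\nu(\eta_k\abs{f}^q)$ after $n$ iterations. Since the walk weight is at most $(c_1\cdot 2D\beta^q)^n\to 0$ and the $\eta_k\abs{f}^q$ integrals are uniformly bounded in $k$ by assumption, this remainder vanishes as $n\to\infty$, legitimising the passage to the limit. This is the only place where the a priori finiteness condition on $f$ is used, and it parallels the boundedness argument needed to justify the inductive limits in Lemma~\ref{lem:sweepoutineq}.
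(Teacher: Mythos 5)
Your proposal is correct and is essentially the paper's own argument: integrate the defective bound of Lemma~\ref{lem:dUboundnonuniform} against $\nu$ via the DLR equation, iterate the resulting recursion, observe that any walk from $i$ to $k$ has length at least $\norm{k-i}$ to extract the geometric decay $c_2^{\norm{k-i}}$, and dispose of the remainder using the a priori bound $\sup_i \nu(\eta_i\abs{f}^q)<\infty$. The paper packages the iteration via nested sums $\cS_M$ rather than your explicit walk sums and absorbs the $\beta^q$ factor into $c_1$ from the start (so in your notation $c_1$ is $\cO(1)$ and the smallness comes entirely from the $\beta^q$ edge weights, not ``the $\cO(\beta)$ behaviour of $c_1$''), but these are presentational differences; the counting estimate, the convergence mechanism, and the role of the finiteness hypothesis are the same.
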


\begin{proof}
For notational convenience, denote by 
\[ W_i \vcentcolon= \nu(\eta_i\abs{f}^q), \quad Y_i \vcentcolon= \nu\abs{\nabla_if}^q, \quad Z = \nu\abs{f}^q \]
so that \eqref{dUboundnonuniformInduction} reads 
\begin{equation}\label{dUboundInductionRelation}
W_i \leq K_1Y_i + K_1Z + c_1 \sum_{k \sim i} W_k
\end{equation}
whilst \eqref{dUbound} reads $W_i \leq K_2Z + K_2\sum_{k \in \bbZ^D} c^{\norm{k - i}}Y_k$. The difference between the induction argument here and that which was given in \S3 is that in the previous case case the LHS depended on $j$ while the defective on the RHS was a summation over a set which depended on $i$. This is in contrast to the current case where the LHS \emph{also} depends on $i$. This means that in the first case only terms depending on the neighbours of $i$ appear whereas when we apply \eqref{dUboundInductionRelation} to each $W_k$ in the sum we expect terms depending on neighbours of neighbours of $i$ to appear, and iterating this procedure eventually each term $\nu\abs{\nabla_kf}^q$ should appear. 

Indeed, let us look at 
\begin{equation}\label{iteration1}
\sum_{k \sim i} W_k \leq 2D K_1 Z + K_1 \sum_{k \sim i} Y_k + c \sum_{k \sim i} \sum_{\ell \sim k} W_\ell,
\end{equation}
and in general if the nested summation is denoted by 
\[ \cS_M(W_i) \vcentcolon= \sum_{k_1 \sim i} \sum_{k_2 \sim k_1} \cdots \sum_{k_M \sim k_{M-1}} W_{k_M}, \]
then
\begin{equation}\label{iterationgeneral}
\cS_M(W_i) \leq K_1\cS_M(Z) + K_1\cS_M(Y_i) + c \cS_{M + 1}(W_i).
\end{equation}
where $\cS_M(Z)$ and $\cS_M(Y_i)$ are defined similarly. The first term $J\cS_M(Z)$ is easily handled, since $Z$ has no dependence on any indices so 
\[ \cS_M(Z) = \sum_{k_1 \sim i} \sum_{k_2 \sim k_1} \cdots \sum_{k_M \sim k_{M-1}} Z = (2D)^MZ. \]
The second term $\cS_M(Y_i)$ requires some counting: the summation contains, for instance, terms $Y_k$ for which $\norm{k - i} = M$, but these terms can only occur in the innermost summation, of which there are $2D$ terms, so a crude estimate is that the contribution of those $Y_k$ for which $\norm{k - i} = M$ is controlled by $2D\sum_{\norm{k - i} = M} Y_k$. Similarly, terms $Y_k$ for which $\norm{k - i} = M-1$ can only occur in the two innermost summations, of which there are $(2D)^2$ terms, and so on so forth; continuing in this way we find 
\begin{equation}\label{estimateonY}
\cS_M(Y_i) \leq \sum_{n=0}^{M} \sum_{\norm{k - i} = n} (2D)^{M-n}Y_k \leq (2D)^M \sum_{n=0}^M \sum_{\norm{k - i} = n} Y_k.
\end{equation}
The final term $\cS_{M + 1}(W_i)$ is handled by our initial assumption $\sup_{i \in \bbZ^D} \nu(\eta_i\abs{f}^q) = \sup_{i \in \bbZ^D} W_i < \infty$, implying each $W_k \leq L$ for some constant $L > 0$ uniformly in $k$. We can then crudely estimate the number of terms by $(2D)^{M+1}$ so that $\cS_{M+1}(W_i) \leq (2D)^{M+1}L$. 

With $\cS_0(X) = X$ where $X$ is one of $W_i, Y_i, Z$, we find 
\begin{align*}
W_i &\leq K_1\cS_0(Y_i) + K_1\cS_0(Z) + c_1 \cS_1(W_i) \vphantom{K_1\sum_{r=0}^M c_1^r \cS_r(Y_i) + K_1\sum_{r=0}^M c_1^r \cS_r(Z) + c_1^{M+1} \cS_{M+1}(W_i)} \\
&\leq K_1\cS_0(Y_i) + K_1\cS_0(Z) + c_1 K_1\cS_1(Z) + c_1 K_1\cS_1(Y_i) + c_1^2 \cS_2(W_i) \vphantom{K_1\sum_{r=0}^M c_1^r \cS_r(Y_i) + K_1\sum_{r=0}^M c_1^r \cS_r(Z) + c_1^{M+1} \cS_{M+1}(W_i)} \\
&\mathrel{\makebox[\widthof{=}]{\vdots}} \vphantom{K_1\sum_{r=0}^M c_1^r \cS_r(Y_i) + K_1\sum_{r=0}^M c_1^r \cS_r(Z) + c_1^{M+1} \cS_{M+1}(W_i)} \\
&\leq K_1\sum_{r=0}^M c_1^r \cS_r(Y_i) + K_1\sum_{r=0}^M c_1^r \cS_r(Z) + c_1^{M+1} \cS_{M+1}(W_i) \\
&\leq K_1\sum_{r=0}^M c_1^r \cS_r(Y_i) + K_1\sum_{r=0}^M (2Dc_1)^rZ + (2Dc_1)^{M+1}L
\end{align*}
so that if $\beta$ is small enough then $\epsilon \vcentcolon= 2Dc_1 < 1/2$, giving $K_1\sum_{r=0}^M (2Dc_1)^r Z \leq \frac{K_1}{1 - \epsilon} Z \leq 2K_1Z$ and $(2Dc_1)^{M+1}L \leq \epsilon^{M+1}L$. For the first term, our previous estimate \eqref{estimateonY} gives 
\begin{align*}
\sum_{r=0}^M c_1^r \cS_r(Y_i) &\leq \sum_{r=0}^M c_1^r (2D)^r \sum_{n=0}^{r} \sum_{\norm{k - i} = n} Y_k = \sum_{n=0}^M \sum_{r=n}^M (2Dc_1)^r \sum_{\norm{k - i} = n} Y_k \\
&\leq 2\sum_{n=0}^M (2Dc_1)^n \sum_{\norm{k - i} = n} Y_k  
\end{align*}
Thus we arrive at 
\[ W_i \leq 2\sum_{n=0}^M (2Dc_1)^n \sum_{\norm{k - i} = n} Y_k + 2K_1 Z + (2Dc_1)^{M+1}L \]
for all $M \in \bbZ_{\geq 0}$ and $\beta$ small enough. Taking $M \rightarrow \infty$ we conclude \eqref{dUbound} holds for $K_2 = \max(2, 2K_1) = \cO(1)$ and $c_2 = 2Dc_1 = \cO(c_1) = o(1)$. 
\end{proof}

We can now verify that the conditions of Theorem \ref{thm:singletoglobal} hold and prove the $q$-SGI. The key ingredient is the previous lemma - it is straightforward to see that the weak $U$-bound is immediate, and more difficult to see that the weak $U$-bound implies the weak $q$-SGI of \bf{(A2)}, but we show the argument to show a strong $U$-bound implies a strong $q$-SGI as in \cite[Lemma~4.3]{InPa09} can be adapted to the weak case. The basic idea is that one can truncate the LHS of \bf{(A2)} into two parts, one where the boundary conditions are bounded and the other where they are not. It turns out that the latter is simpler to handle as the $U$-bound can be inserted. 

\begin{proof}[Proof of Theorem \ref{thm:SGIglobalC1C2}]
By applying Lemma \ref{lem:dUbound} and \bf{(C2)}, we obtain
\begin{align*}
\nu(\abs{f}^q \abs{\nabla_j V_{ij}}^q) &\leq B\nu\abs{f}^q + B\nu(\eta_i\abs{f}^q) + B\nu(\eta_j\abs{f}^q) \vphantom{A_2\sum_{k \in \bbZ^D} \left(C_\beta^{\norm{k - i}} + C_\beta^{\norm{k - j}}\right)\nu\abs{\nabla_kf}^q} \vphantom{\left(\nu\abs{f}^q + \sum_{k \in \bbZ^D} \left(c_2^{\norm{k - i}} + c_2^{\norm{k - j}}\right)\nu\abs{\nabla_kf}^q\right)} \\
&\leq B(1 + 2K_2)\nu\abs{f}^q + BK_2\sum_{k \in \bbZ^D} \left(c_2^{\norm{k - i}} + c_2^{\norm{k - j}}\right)\nu\abs{\nabla_kf}^q \vphantom{\left(\nu\abs{f}^q + \sum_{k \in \bbZ^D} \left(c_2^{\norm{k - i}} + c_2^{\norm{k - j}}\right)\nu\abs{\nabla_kf}^q\right)} \\
&\leq B(1 + 2K_2)\left(\nu\abs{f}^q + \sum_{k \in \bbZ^D} \left(c_2^{\norm{k - i}} + c_2^{\norm{k - j}}\right)\nu\abs{\nabla_kf}^q\right)
\end{align*}
This is \bf{(A1)} with $A = B(1 + 2K_2) = \cO(1)$ but with an incorrect interaction matrix $M_{ki}$ since $c_2^{\norm{k - i}} + c_2^{\norm{k - j}}$ has dependence on $j$. Defining $\dist(k, S) = \inf_{j \in S} \norm{k - j}$ for $S \sbs \bbZ^D$, we see that for $i_1 = \{j \in \bbZ^D \mid \dist(j, i) \leq 1\} = \{i\} \cup \{\sim i\}$ that 
\[ c_2^{\norm{k - i}} + c_2^{\norm{k - j}} \leq 2c_2^{\dist(k, i_1)} \]
since $j \sim i$ by assumption. Thus the correct interaction matrix is $M_{ki} = c_2^{\dist(k, i_1)}$ modulo a constant. Note $M_{ij} \geq 0$ for all $i, j \in \bbZ^D$, and $\sum_{j \in \bbZ^D} M_{ij}$ is clearly independent of $i$ and finite for $c_2 < 1$ since the number of sites with $\norm{i - j} = n$ is grows polynomially like $n^D$ while $M_{ij}$ for such sites decays as $c_2^n$, meaning $\sum_{j \in \bbZ^D} M_{ij}$ is controlled by $\sum_{n \geq 0} c_2^nn^D < \infty$. It follows \bf{(B2)} holds, but we hold off judgment on \bf{(B1)} since we may need to enlarge the constant $B(1 + 2K_2)$ depending on the constant we find for \bf{(A2)}. 

To obtain \bf{(A2)}, fix a  parameter $L_1 > 1$ and denote by $B(R) = \{g \in \bbG \mid d(g) \leq R\}$ the metric ball of radius $R$ in the Heisenberg group associated to the Carnot-Carath\'eodory distance $d$ on $\bbG$. Since $\eta$ is continuous and diverges to infinity in all directions, we can show there exists $R, L_2 > 1$ such that 
\begin{equation}\label{etainballs}
\{\eta_i \leq L_1\} \sbs B(R) \sbs B(3R) \sbs \{\eta_i \leq L_2\}.
\end{equation}
(One way to see this is to note that on every nilpotent Lie group there is a canonical homogeneous norm controlling the Euclidean norm.) 

\begin{remark}
This is essentially the first time in this paper that we make use of the explicit structure of the underlying space. The framework of \S3 applies to more general spaces (than the nilpotent Lie groups) and indeed the $U$-bounds of \cite{HeZe09} were originally proved for a general sub-gradient $\nabla$ on $\bbR^n$ and in fact the ideas extended to the manifold setting as well. That said, the setting of nilpotent Lie groups provides a rich family of examples to study and will still remain the principal focus of this paper. 
\end{remark}

Fix a second constructive parameter, this time a spin configuration $y \in \Omega$, and denote by $m_y = \frac{1}{\abs{B(R)}} \int_{B(R)} f(y_i)dy_i \vcentcolon= \frac{1}{\abs{B(R)}} \int_{B(R)} f_i^y(y_i)dy_i$, that is the average of $f$ over $B(R) \sbs \bbG^i$ with boundary conditions $y$. We study the LHS of \bf{(A2)} by applying the cutoff
\begin{equation}\label{geodesicI1I2}
\begin{aligned}
\nu\bbE_i^\omega \abs{f - \bbE_i^\omega f}^q &\leq 2^q \nu\bbE_i^\omega \abs{f - m_y}^q \\
&= 2^q\nu\bbE_i^\omega\abs{f - m_y}^q\mathbbm{1}_{\{\eta_i + \bar{\eta}_i \leq L_1\}} + 2^q\nu\bbE_i^\omega\abs{f - m_y}^q\mathbbm{1}_{\{\eta_i + \bar{\eta}_i > L_1\}}
\end{aligned}
\end{equation}
where $\tilde{\eta}_i \vcentcolon= \sum_{j \sim i} \eta_j$, so that the function $\eta_i + \bar{\eta}_i$ is a function of $x_i$ and $x_j$ for $j \sim i$. We first study $\smash{I_1 = \abs{f - m_y}^q\mathbbm{1}_{\{\eta_i + \bar{\eta}_i \leq L_1\}}}$. Since $\eta$ is nonnegative, we know $\{\eta_i + \bar{\eta}_i \leq L_1\} \sbs \{\eta_i, \bar{\eta}_i \leq L_1\}$ and therefore, since $\phi$ and $V$ are continuous, we can bound the potential in this region by 
\begin{equation}\label{I1Uibound}
\abs{U_i^\omega} \leq \sup_{\eta(h) \leq L_2} \abs{\phi(h)} + 2D\beta\sup_{\{\eta(h_1), \eta(h_2) \leq L_2\}} \abs{V(h_1, h_2)} \eqqcolon M.
\end{equation}
In fact, taking $\beta \ll 1$, we can make $M$ independent of $\beta$ and solely dependent on $L_2$ which in turn depends on $L_1$. Denote by $S(L_1) = \{\bar{\eta}_i \leq L_1\}$. We will remove the dependence of $I_1$ as a function of $x \in \Omega$ by integrating $I_1$ against $\bbE_i^\omega$; in doing so, we ``freeze'' the boundary conditions according to $\omega$ which will allow us to take $\smash{\mathbbm{1}_{\{\eta_i + \bar{\eta}_i \leq L_1\}} \leq \mathbbm{1}_{\{\bar{\eta}_i \leq L\}} = \mathbbm{1}_{S(L)}}$ outside the integral in $x_i$. In particular, $\bbE_i^\omega I_1 = 0$ whenever $\omega \notin S(L_1)$. Then  
\begin{equation}\label{eiwI1}
\begin{aligned}
\bbE_i^\omega I_1 &\leq \int_{B(R)} \abs{f(x_i) - \frac{1}{\abs{B(R)}}\int_{B(R)} f(y_i)dy_i}^q \frac{e^{-U_i^\omega}}{Z_i^\omega} dx_i \mathbbm{1}_{S(L_1)} \\
&\leq \frac{e^{M}}{Z_i^\omega} \int_{B(R)} \abs{\frac{1}{B(R)} \int_{B(R)} f(x_i) - f(y_i)dy_i}^q dx_i \mathbbm{1}_{S(L_1)} 
\vphantom{\int_{B(R)} \abs{f(x_i) - \frac{1}{\abs{B(R)}}\int_{B(R)} f(y_i)dy_i}^q \frac{e^{-U_i^\omega}}{Z_i^\omega} dx_i \mathbbm{1}_{S(L_1)}} \\
&\leq \frac{e^{M}}{Z_i^\omega \abs{B(R)}} \int\int \abs{f(x_i) - f(y_i)}^q \mathbbm{1}_{B(R)}(x_i)\mathbbm{1}_{B(R)}(y_i)dy_idx_i \mathbbm{1}_{S(L_1)}
\vphantom{\int_{B(R)} \abs{f(x_i) - \frac{1}{\abs{B(R)}}\int_{B(R)} f(y_i)dy_i}^q \frac{e^{-U_i^\omega}}{Z_i^\omega} dx_i \mathbbm{1}_{S(L_1)}} \\
&\leq \frac{e^{M}}{Z_i^\omega \abs{B(R)}} \int\int \abs{f(x_i) - f(x_i \circ y_i)}^q \mathbbm{1}_{B(R)}(x_i)\mathbbm{1}_{B(R)}(x_i \circ y_i)dy_idx_i \mathbbm{1}_{S(L_1)}
\vphantom{\int_{B(R)} \abs{f(x_i) - \frac{1}{\abs{B(R)}}\int_{B(R)} f(y_i)dy_i}^q \frac{e^{-U_i^\omega}}{Z_i^\omega} dx_i \mathbbm{1}_{S(L_1)}}
\end{aligned}
\end{equation}
where the third inequality follows from an application of Jensen's inequality (with respect to the uniform measure on $B(R)$) and the last inequality follows from the change of variable $y_i \mapsto x_i \circ y_i$. Note any change of variable in $x_i$ does not affect $\smash{\mathbbm{1}_{S(L_1)}}$ since membership in $S(L_1)$ does not depend on $x_i$. 

Let $\gamma: [0, d(y_i)] \rightarrow \bbG$ be a geodesic from $0$ to $y_i$ such that $\abs{\dot{\gamma}(t)} \leq 1$ for all $t$. Then 
\begin{equation}\label{geodesicIneq0}
\begin{aligned}
\abs{f(x_i) - f(x_i \circ y_i)}^q &= \abs{\int_0^{d(y_i)} \frac{d}{dt}f(x_i \circ \gamma(t))}^q \\
&= \abs{\int_0^{d(y_i)} \nabla_i f(x_i \circ \gamma(t)) \cdot \dot{\gamma}(t)}^q \\
&\leq d^{q/p}(y_i) \int_0^{d(y_i)} \abs{\nabla_i f(x_i \circ \gamma(t))}^q dt \vphantom{\abs{\int_0^{d(y_i)} \nabla_i f(x_i \circ \gamma(t)) \cdot \dot{\gamma}(t)}^q} 
\end{aligned}
\end{equation}
by H\"older's inequality.
Since $x_i \in B(R)$ and $x_i \circ y_i \in B(R)$, the estimates
\begin{equation}\label{geodesicIneq1}
d(y_i) \leq d(x_i) + d(x_i \circ y) \leq 2R
\end{equation}
and 
\begin{equation}\label{geodesicIneq2}
d(x_i \circ \gamma(t)) \leq d(x_i) + d(\gamma(t)) \leq d(x_i) + d(y_i) \leq 3R 
\end{equation}
hold for all $t \in [0, d(y_i)]$, since $d$ is a metric and $\gamma$ is a geodesic. If we insert \eqref{geodesicIneq0}, \eqref{geodesicIneq1}, and \eqref{geodesicIneq2} into \eqref{eiwI1}, we find 
\begin{align*}
\bbE_i^\omega I_1 &\leq \frac{2Re^{M}}{Z_i^\omega \abs{B(R)}} \int\int\int_0^{d(y_i)} \abs{\nabla_i f(x_i \circ \gamma(t))}^q \mathbbm{1}_{B(2R)}(y_i) \mathbbm{1}_{B(3R)}(x_i \circ \gamma(t)) dt dy_i dx_i \mathbbm{1}_{S(L_1)} \\
&\leq \frac{2Re^{M}}{Z_i^\omega \abs{B(R)}} \int\int\int_0^{d(y_i)} \abs{\nabla_i f(x_i)}^q \mathbbm{1}_{B(2R)}(y_i) \mathbbm{1}_{B(3R)}(x_i) dt dy_i dx_i \mathbbm{1}_{S(L_1)} 
\vphantom{\frac{2L_1e^{M}}{Z_i^\omega \abs{B(R)}} \int\int\int_0^{d(y_i)} \abs{\nabla_i f(x_i \circ \gamma(t))}^q \mathbbm{1}_{B(2R)}(y_i) \mathbbm{1}_{B(3R)}(x_i \circ \gamma(t)) dt dy_i dx_i \mathbbm{1}_{S(L_1)}} \\
&\leq \frac{4R^2e^{M}}{Z_i^\omega \abs{B(R)}} \int\int \abs{\nabla_i f(x_i)}^q \mathbbm{1}_{B(2R)}(y_i) \mathbbm{1}_{B(3R)}(x_i) dy_i dx_i \mathbbm{1}_{S(L_1)}
\vphantom{\frac{2L_1e^{M}}{Z_i^\omega \abs{B(R)}} \int\int\int_0^{d(y_i)} \abs{\nabla_i f(x_i \circ \gamma(t))}^q \mathbbm{1}_{B(2R)}(y_i) \mathbbm{1}_{B(3R)}(x_i \circ \gamma(t)) dt dy_i dx_i \mathbbm{1}_{S(L_1)}} \\
&\leq \frac{4R^2 e^{M} \abs{B(2R)}}{Z_i^\omega \abs{B(R)}} \int \abs{\nabla_i f(x_i)}^q \mathbbm{1}_{B(3R)}(x_i) dx_i \mathbbm{1}_{S(L_1)}
\vphantom{\frac{2L_1e^{M}}{Z_i^\omega \abs{B(R)}} \int\int\int_0^{d(y_i)} \abs{\nabla_i f(x_i \circ \gamma(t))}^q \mathbbm{1}_{B(2R)}(y_i) \mathbbm{1}_{B(3R)}(x_i \circ \gamma(t)) dt dy_i dx_i \mathbbm{1}_{S(L_1)}} \\
&\leq 4R^2 e^{2M} \frac{\abs{B(2R)}}{\abs{B(R)}} \int \abs{\nabla_i f(x_i)}^q \frac{e^{-U_i^\omega}}{Z_i^\omega} dx_i \mathbbm{1}_{S(L_1)}
\vphantom{\frac{2L_1e^{M}}{Z_i^\omega \abs{B(R)}} \int\int\int_0^{d(y_i)} \abs{\nabla_i f(x_i \circ \gamma(t))}^q \mathbbm{1}_{B(2R)}(y_i) \mathbbm{1}_{B(3R)}(x_i \circ \gamma(t)) dt dy_i dx_i \mathbbm{1}_{S(L_1)}} \\
&\leq 4R^2 e^{2M} \frac{\abs{B(2R)}}{\abs{B(R)}} \bbE_i^\omega \abs{\nabla_i f}^q \mathbbm{1}_{S(L_1)},
\vphantom{\frac{2L_1e^{M}}{Z_i^\omega \abs{B(R)}} \int\int\int_0^{d(y_i)} \abs{\nabla_i f(x_i \circ \gamma(t))}^q \mathbbm{1}_{B(2R)}(y_i) \mathbbm{1}_{B(3R)}(x_i \circ \gamma(t)) dt dy_i dx_i \mathbbm{1}_{S(L_1)}}
\end{align*}
where in the first inequality we use $d(y_i)^{q/p} \leq 2R$, in the second the change of variable $x_i \mapsto x_i \circ \gamma(t)$, and in the fifth inequality the bound \eqref{I1Uibound}, giving $1 \leq e^{M_{L_1}}/e^{-U_i^\omega}$. 

Let us recall $R$ and $M_{L_1}$ depend on our choice of $L_1$ and are independent of $\omega$. It turns out also $\frac{\abs{B(2R)}}{\abs{B(R)}}$ is bounded above independently of $R$; indeed on a nilpotent Lie group $\bbG$ the balls associated to the Carnot-Carath\'eodory distance $d$ behave much like Euclidean balls in that $\abs{B(R)} \simeq R^Q$ for an appropriate power $Q$, called the homogeneous dimension, and so $\abs{B(2R)}/\abs{B(R)}$ is a constant. That said, such a bound is not needed for what remains of the proof. With $C = C(L_1) = 4R^2e^{2M}\frac{\abs{B(2R)}}{\abs{B(R)}}$, applying the DLR equation we obtain 
\begin{equation}\label{I1H2SGI}
\nu I_1 \leq C\nu(\abs{\nabla_if}^q \mathbbm{1}_{S(L)}) \leq C\nu\abs{\nabla_i f}^q. 
\end{equation}

As mentioned earlier, the estimate for $\smash{I_2 = \abs{f - m_y}^q\mathbbm{1}_{\{\eta_i + \bar{\eta}_i > L_1\}}}$ is easier to obtain: the $U$-bound \eqref{dUbound} gives
\begin{equation}\label{I2H2SGI}
\begin{aligned}
\nu I_2 &\leq \frac{1}{L_1}\nu\left(\abs{f - m_y}^q(\eta_i + \bar{\eta}_i)\right) 
\vphantom{\frac{K_2}{L_1} \left(\nu\abs{f - m_y}^q + \sum_{k \in \bbZ^D} \sum_{j \sim i} \left(c_2^{\norm{k - i}} + c_2^{\norm{k - j}}\right) \nu\abs{\nabla_kf}^q\right)} \\
&\leq \frac{K_2}{L_1} \left(\nu\abs{f - m_y}^q + \sum_{k \in \bbZ^D} \sum_{\norm{j - i} \leq 1} c_2^{\norm{k - j}} \nu\abs{\nabla_k f}^q\right) 
\vphantom{\frac{K_2}{L_1} \left(\nu\abs{f - m_y}^q + \sum_{k \in \bbZ^D} \sum_{j \sim i} \left(c_2^{\norm{k - i}} + c_2^{\norm{k - j}}\right) \nu\abs{\nabla_kf}^q\right)} \\
&\leq \frac{K_2}{L_1} \left(\nu\abs{f - m_y}^q + \sum_{k \in \bbZ^D} \sum_{j \sim i} \left(c_2^{\norm{k - i}} + c_2^{\norm{k - j}}\right) \nu\abs{\nabla_kf}^q\right) \\
&\leq \frac{4dK_2}{L_1} \left(\nu\abs{f - m_y}^q + \sum_{k \in \bbZ^D} c_2^{\dist(k, i_1)} \nu\abs{\nabla_kf}^q\right) 
\vphantom{\frac{K_2}{L_1} \left(\nu\abs{f - m_y}^q + \sum_{k \in \bbZ^D} \sum_{j \sim i} \left(c_2^{\norm{k - i}} + c_2^{\norm{k - j}}\right) \nu\abs{\nabla_kf}^q\right)}
\end{aligned}
\end{equation}
with $i_1 = \{i\} \cup \{\sim i\}$ as before. Note here we use that $m_y$ is a fixed real number so $\nabla_j m_y = 0$ for all $j \in \bbZ^D$. It follows since $K_2 = \cO(1)$ we can take $L_1$ large enough (but constant in $\beta$) to ensure $4dK_2/L \leq \frac{1}{2}$. We then obtain that $C = C(L_1)$ is also $\cO(1)$ and putting these estimates together we conclude
\begin{align*}
\nu\abs{f - m_y}^q = \nu I_1 + \nu I_2 &\leq C\nu\abs{\nabla_if}^q + \frac{1}{2}\nu\abs{f - m_y}^q + \frac{1}{2}\sum_{k \in \bbZ^D} c_2^{\dist(k, i_1)}\nu\abs{\nabla_kf}^q
\end{align*}
so that from \eqref{geodesicI1I2} one has 
\[ \nu\bbE_i^\omega\abs{f - \bbE_i^\omega f}^q \leq 4C\nu\abs{\nabla_if}^q + 2\sum_{k \in \bbZ^D} c_2^{\dist(k, i_1)}\nu\abs{\nabla_kf}^q. \]
Thus we see that $A = \max(B(1 + 2K_2), 4C + 2) = \cO(1)$ satisfies \bf{(B1)}, which finishes the proof. 
\end{proof}

\section{Examples}
In this section, we construct examples of potentials and interactions, which may depend on Carnot-Carath\'eodory distance $d$ or a smooth homogeneous norm, satisfying \bf{(C1)} and \bf{(C2)}. Consequently, this will provide new examples where a $q$-SGI holds. To this end, the usual strategy is to find a phase $\phi$ and a function $\eta$ satisfying the $U$-bound of \bf{(C1)}, and then equipping this with an interaction $V$ satisfying \bf{(C2)}. Before we proceed, we first give a brief introduction into the Heisenberg group, which is the prototypical example of a nontrivial nilpotent Lie group and perhaps one of the simplest toy models in the theory of subelliptic (or subriemannian) geometry.

\subsection{The Heisenberg group}
In this paper, the Heisenberg group $\bbH$ is identified as the Lie group $(\bbR^3, \circ)$ equipped with group law
\begin{equation}\label{def:grouplaw}
\left(\begin{array}{c} x_1 \\ x_2 \\ x_3 \\ \end{array}\right) \circ \left(\begin{array}{c} y_1 \\ y_2 \\ y_3 \\ \end{array}\right) = \left(\begin{array}{c} x_1 + y_1 \\ x_2 + y_2 \\ x_3 + y_3 + 2(x_1y_2 - x_2y_1) \\ \end{array}\right).
\end{equation}
A property that distinguishes $\bbH$ from the trivial group $(\bbR^3, +)$ is the fact its Lie algebra $\mf{h}$ of left-invariant vector fields is generated by the two vector fields 
\begin{equation}\label{def:vecfields}
\begin{aligned}
X^1 &= \partial_1 + 2x_2\partial_3, \\
X^2 &= \partial_2 - 2x_1\partial_3,
\end{aligned}
\end{equation}
in the sense $X^1$ and $X^2$, together with their commutator $X^3 \vcentcolon= [X^1, X^2] = -4\partial_3$, span all of $\mf{h}$. The importance of the two vector fields $X^1$ and $X^2$ is that they allow us to define a differential calculus of sorts in which the only differential operators that exist are $X^1$ and $X^2$, and in particular they allow us to define analogues of objects in $\bbR^3$ that take into account the group structure of $\bbH$. 

For instance, we may define a metric on $\bbH$ by considering curves whose derivative belongs to the sub-bundle spanned by $X^1$ and $X^2$. A Lipschitz curve $\gamma: [0, T] \rightarrow \bbH$ is said to be \emph{subunit} with respect to $X^1$ and $X^2$ if there exist measurable coefficients $c(t) = (c_1(t), c_2(t))_{t \in [0, T]}$ such that $\dot{\gamma}(t) = c_1(t)X^1(\gamma(t)) + c_2(t)X^2(\gamma(t))$ and $c_1^2(t) + c_2^2(t) \leq 1$ for almost all $t \in [0, T]$. It turns out on $\bbH$ we can always connect two points $x, y \in \bbH$ by a subunit curve by Chow's theorem (see \cite[Chapter~19]{BLU07}) and so we may define 
\begin{equation}\label{def:ccdist}
d(x, y) = \min \left\{T > 0 \mid \exists \text{ a subunit curve } \gamma: [0, T] \rightarrow \bbH \Mwith \begin{array}{c} \gamma(0) = x \\ \gamma(T) = y\end{array}\right\}.
\end{equation}
From this perspective the distance between two points is the minimum amount of time required to connect two points with a curve with bounded coefficients and which can only ``travel in the directions'' of $X^1$ and $X^2$. Alternatively, we may remove the condition of bounded coefficients and instead, with the length of a curve defined as $\ell(\gamma) = \int_0^T \abs{c(t)}^2dt$, define $d$ as the minimum length of a curve which connects two points in unit time. In any case the two definitions are equivalent and $d$ is a metric \cite[Proposition~3.1]{JeSa87}, called the \emph{Carnot-Carath\'eodory metric}, and it endows $\bbH$ with geodesics that are distinguished from the geodesics of $\bbR^3$. More precisely, the geodesics of $\bbR^3$ are straight lines, but the geodesics of $\bbH$ with respect to $d$ are helices \cite[Theorem~2.1]{HaZi15}. 

Of greater importance to this paper is the fact $X^1$ and $X^2$ allow us to define an analogue of the Euclidean gradient $\nabla = (\partial_1, \partial_2, \partial_3)$, namely the \emph{sub-gradient} 
\begin{equation}\label{def:subgradient}
\nabla_\bbH = (X^1, X^2)
\end{equation}
on $\bbH$. Similarly, we may define an analogue of the euclidean Laplacian $\Delta = \partial_1^2 + \partial_2^2 + \partial_3^2$, called the \emph{sub-Laplacian} as follows
\[ \cL_\bbH = (X^1)^2 + (X^2)^2. \]
The reader may notice the definition of the sub-gradient and the sub-Laplacian depend on the choice of generators $X^1$ and $X^2$ of $\mf{h}$, but there is a canonical choice (and thus we may refer to ``the'' sub-gradient and ``the'' sub-Laplacian) of generators (which is taken in \eqref{def:vecfields}) associated to the group law which makes the Jacobian of the left translation operator at zero coincide with the identity matrix, in analogue with the Euclidean case. The sub-Laplacian will not feature prominently in this paper but we refer the reader to \cite[Chapter~5]{BLU07} for more details. 

The metric $d$ also induces a ``norm'' on $\bbH$, by defining with some abuse of notation, 
\begin{equation}\label{def:ccnorm}
d(x) \vcentcolon= d(x, 0).
\end{equation}
The fact $d$ is a norm requires some qualification since a'priori $d$ need not satisfy the triangle inequality nor be positively homogeneous. If these properties are given the expected description which accounts for the group structure on $\bbH$, then $d$ \emph{does} satisfy the triangle inequality defined with the group law $\circ$ in \eqref{def:grouplaw} instead of the addition law $+$ on $\bbR^3$, i.e. 
\[ d(x \circ y) \leq d(x) + d(y). \]
Similarly, we may equip $\bbH$ with a family of dilations $(\delta_\lambda)_{\lambda > 0}$ mirroring the stratification of its Lie algebra:
\begin{equation}\label{def:dilation}
\delta_\lambda(x_1, x_2, x_3) = (\lambda x_1, \lambda x_2, \lambda^2 x_3).
\end{equation}
In doing so, we realise $\bbH$ as a homogeneous Carnot group and we usually write the points of $\bbH$ as $x = (w, z) \in \bbR^2 \times \bbR$ so that \eqref{def:dilation} reads $\delta_\lambda(w, z) = (\lambda w, \lambda^2z)$. With respect to these dilations, $d$ \emph{is} positively homogeneous, i.e. $d(\delta_\lambda(x)) = \lambda d(x)$, and for this reason we call $d$ a homogeneous norm. 

It turns out that $d$ shares a rather important property with the Euclidean norm $x \mapsto \abs{x}$: it satisfies the eikonal equation away from the centre $\{(0, z) \mid z \in \bbR\} \sbs \bbH$. 
\begin{proposition}[{\cite[Theorem~3.8]{Mon00}}]\label{prop:eikonal}
The metric $d$ satisifes
\[ \abs{\nabla_\bbH d(w, z)} = 1 \]
for all $w \neq 0$. 
\end{proposition}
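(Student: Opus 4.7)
The plan is to establish the two one-sided bounds $|\nabla_\bbH d(x)| \leq 1$ and $|\nabla_\bbH d(x)| \geq 1$ at $x = (w, z)$ with $w \neq 0$, using the subadditivity of $d$ with respect to the group law for the upper bound and the existence of a minimising subunit curve from $0$ to $x$ for the lower bound.

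For the upper bound, I would first note that for any fixed $v = (v_1, v_2) \in \bbR^2 \setminus \{0\}$ the rescaled straight line $s \mapsto (sv_1/|v|, sv_2/|v|, 0)$ is subunit with respect to $X^1, X^2$ (a direct check using \eqref{def:vecfields} confirms that its velocity equals $(v_1/|v|) X^1 + (v_2/|v|) X^2$ along the curve), so together with the homogeneity $d(\delta_\lambda y) = \lambda d(y)$ one obtains $d((tv_1, tv_2, 0)) = t|v|$. Since the flow of the left-invariant field $v_1 X^1 + v_2 X^2$ starting at $x$ is precisely $t \mapsto x \circ (tv_1, tv_2, 0)$, the triangle inequality $d(x \circ y) \leq d(x) + d(y)$ gives the one-sided bound
\[ (v_1 X^1 + v_2 X^2) d(x) = \lim_{t \to 0^+} \frac{d(x \circ (tv_1, tv_2, 0)) - d(x)}{t} \leq |v|, \]
and the symmetric argument with $-v$ furnishes the reverse inequality. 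Maximising over unit vectors $v$ then produces $|\nabla_\bbH d(x)| \leq 1$.

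For the lower bound, let $\gamma: [0, T] \to \bbH$ be a length-minimising subunit curve from $0$ to $x$, with $T = d(x)$ and $\dot{\gamma}(t) = c_1(t) X^1(\gamma(t)) + c_2(t) X^2(\gamma(t))$, $c_1^2(t) + c_2^2(t) \leq 1$ a.e. Minimality forces $d(\gamma(t)) = t$ for $t \in [0, T]$, so after differentiating and applying the chain rule together with Cauchy-Schwarz,
\[ 1 = \tfrac{d}{dt} d(\gamma(t)) = c_1(t) (X^1 d)(\gamma(t)) + c_2(t) (X^2 d)(\gamma(t)) \leq |\nabla_\bbH d(\gamma(t))|. \]
Evaluating at $t = T$ gives $|\nabla_\bbH d(x)| \geq 1$ as required.

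The principal obstacle is regularity: both arguments assume that $d$ is classically differentiable at $x$ (so that $\nabla_\bbH d(x)$ is well-defined and the directional derivative agrees with $v \cdot \nabla_\bbH d(x)$), and that the chain rule is valid along $\gamma$. The hypothesis $w \neq 0$ is exactly what allows one to invoke the explicit description of CC-geodesics on $\bbH$ as arcs of helices, yielding both a unique minimising geodesic terminating at $x$ and smoothness of $d$ on the set $\{w \neq 0\}$; this smoothness statement is the nontrivial input, and its proof (carried out in detail in \cite{Mon00}) is where essentially all the Heisenberg-specific geometry enters. Once this regularity is granted the two inequalities combine to yield the eikonal identity.
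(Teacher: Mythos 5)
The paper does not prove this proposition; it is taken verbatim from Monti's thesis \cite[Theorem~3.8]{Mon00}, so there is no internal argument to compare yours against. Your two-sided argument is the standard derivation of the eikonal identity once differentiability of $d$ on $\{w\neq 0\}$ has been secured: the upper bound $\smallabs{\nabla_\bbH d}\le 1$ from the triangle inequality applied to horizontal translates (equivalently, that $d$ is $1$-Lipschitz with respect to itself), and the lower bound by differentiating $d(\gamma(t))=t$ along a unit-speed minimising geodesic and invoking Cauchy--Schwarz on the horizontal controls. You correctly flag that the genuinely nontrivial ingredient is the $C^1$ regularity of $d$ away from the centre (uniqueness and non-degeneracy of the minimising helix ending at $x$), and you appeal to Monti for exactly that; this is the same level of appeal the paper itself makes, so your sketch is consistent in scope.

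One small caution about a computational detail: with the group law \eqref{def:grouplaw} the vector fields in \eqref{def:vecfields} are in fact \emph{right}-invariant (a direct check gives $(d\tau_h)_0\partial_1=\partial_1-2h_2\partial_3\neq X^1(h)$ but $(d\sigma_h)_0\partial_1=\partial_1+2h_2\partial_3=X^1(h)$, where $\tau_h$ and $\sigma_h$ denote left and right translation). Consequently the integral curve of $v_1X^1+v_2X^2$ through $x$ is $t\mapsto(tv_1,tv_2,0)\circ x$, not $x\circ(tv_1,tv_2,0)$ as you wrote. This is only a sign-convention mismatch inherited from the paper's own wording (which also calls the $X^i$ left-invariant), and since the triangle inequality $d(a\circ b)\le d(a)+d(b)$ applies to translations on either side, your estimate and conclusion are unaffected --- but it is worth recomputing this once when writing the argument out in full.
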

In light of this result we might be led to believe that $d$ is the most ``natural'' choice of homogeneous norm on $\bbH$ in that it analogises the Euclidean norm. However it does depart from the Euclidean norm; for instance, the isoperimetric problem in $\bbH$ is \emph{not} optimised by $d$-balls \cite[Proposition~4.16]{Mon00}. In this paper, we study also another homogeneous norm $N$, called the Kaplan norm. On a general homogeneous Carnot group, the Kaplan norm is expressed as the fundamental solution of the sub-Laplacian raised to a certain exponent, and in the specific case of $\bbH$ it takes the explicit form
\begin{equation}\label{def:kaplan}
N(w, z) = (\abs{w}^4 + 16\abs{z}^2)^{1/4}.
\end{equation}
Interestingly enough homogeneous norms are topologically equivalent, and yet may exhibit different analytic behaviour. For instance 
$d$ is smooth away from the centre of $\bbH$, while $N$ is smooth everywhere except at zero. Moreover,  $d$ satisfies the eikonal equation, whereas for $N$ one has 
\[\smash{\lvert\nabla_\bbH N(w, z)\rvert = \frac{\abs{w}}{N}},\]
and in particular $\smash{\lvert\nabla_\bbH N\rvert}$ can be arbitrarily small far away from the origin. The proofs of these results on homogeneous norms can be found e.g. in \cite[Chapter~5]{BLU07}.

\subsection{The Carnot-Carath\'eodory distance $d$}
The problem of finding $\phi$ and $\eta$ satisfying \bf{(C1)} on the Heisenberg group has been studied in the literature. As mentioned earlier, one already has the $q$-logarithmic Sobolev inequality courtesy of \cite[Theorem~5.2]{InPa09}, which made use of the following $U$-bound for the Carnot-Carath\'eodory distance $d$ first appearing in \cite[Theorem~2.3]{HeZe09}:
\begin{equation}\label{uboundford}
\int \abs{g}^qd^{q(p-1)}e^{-\alpha d^p}dh \lesssim \int \left(\abs{\nabla g}^q + \abs{g}^q\right)e^{-\alpha d^p}dh
\end{equation}
for $\alpha > 0$, $p > 1$, and $q \geq 1$. If we consider the interactions $V$ for which 
\[ \abs{\nabla_iV_{ij}(h_i, h_j)}^q + \abs{\nabla_jV_{ij}(h_i, h_j)}^q \leq B(1 + d^{q(p-1)}(h_i) + d^{q(p-1)}(h_j)), \]
one starting point is $V_{ij} = P(d_i, d_j)$ for a polynomial $P$ and where $V_{ij} \vcentcolon= V(h_i, h_j)$, $d_i \vcentcolon= d(h_i)$, and $d_j \vcentcolon= d(h_j)$, in analogue with the shorthands $\eta_i$ and $\eta_j$ in the previous section. The subgradient is 
\[ \abs{\nabla_iV_{ij}} = \abs{\nabla_i d_i}\abs{\partial_1P(d_i, d_j)} \leq \abs{\partial_1P(d_i, d_j)} \]
by Proposition \ref{prop:eikonal}, and assuming $P$ has degree $r \in \bbN$ its subgradient is controlled by a polynomial $\partial_1P$ of degree at most $r-1$. The same goes for $\abs{\nabla_jV_{ij}}$, which means $\abs{\nabla_iV_{ij}(h_i, h_j)}^q + \abs{\nabla_jV_{ij}(h_i, h_j)}^q$ is controlled by $1 + d_i^{q(r-1)} + d_j^{q(r-1)}$ modulo a constant. It follows \bf{(C2)} amounts to $q(r - 1) \leq q(p - 1)$, i.e. $r \leq p$. 

\begin{theorem}\label{sgiford0}
Let $\alpha > 0$, $p > 1$, and $q \in (1, 2]$ be dual to $p$. Define a local specification on $\bbH^{\bbZ^D}$ by the following potential:
\[ U_\Lambda^\omega(x_\Lambda) = \alpha \sum_{i \in \Lambda} d^p(x_i) + \sum_{i, j \in \Lambda} \beta_{ij} P(d(x_i), d(x_j)) + \sum_{i \in \Lambda, \, j \notin \Lambda} \beta_{ij} P(d(x_i), d(\omega_j)) \]
with $\abs{\beta_{i,j}} < \beta$ for all $i, j \in \bbZ^D$, and for $P$ a polynomial of degree $\bbN \ni r \leq p$. Then for all sufficiently small $\beta > 0$, there exists a unique Gibbs measure $\nu$ which satisfies the $q$-spectral gap inequality
\[ \nu\abs{f - \nu f}^q \leq C_{SG}\nu\abs{\nabla f}^q. \]
\end{theorem}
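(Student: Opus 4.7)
The plan is to apply Theorem~\ref{thm:SGIglobalC1C2} with the choice of $U$-bound function $\eta(h) \vcentcolon= d^{q(p-1)}(h)$, and to obtain existence of a Gibbs measure from the Dobrushin-type corollary derived from Lemma~\ref{lem:dUboundnonuniform}. Since $d$ is a homogeneous norm on $\bbH$, the function $\eta$ is continuous and diverges to infinity in every direction, so the regularity hypothesis on $\eta$ in \textbf{(C1)} is automatic. The real work is the verification of \textbf{(C1)} and \textbf{(C2)} with this choice; once both are in hand, the main theorem of Section~4 delivers the $q$-SGI directly.

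To verify \textbf{(C1)}, one notices that the required inequality
\[
\int |g|^q d^{q(p-1)} e^{-\alpha d^p}\, dh \leq B \int \bigl(|\nabla g|^q + |g|^q\bigr) e^{-\alpha d^p}\, dh
\]
is precisely the $U$-bound \eqref{uboundford} recalled from \cite[Theorem~2.3]{HeZe09}, so no further work is needed. To verify \textbf{(C2)}, I would differentiate $V(h_i, h_j) = P(d(h_i), d(h_j))$ by the chain rule. Writing $d_i = d(h_i)$ and $d_j = d(h_j)$, Proposition~\ref{prop:eikonal} gives $|\nabla_{h_i} d_i| = |\nabla_{h_j} d_j| = 1$ off the centre $\{w=0\}$ of $\bbH$, which is a Lebesgue null set. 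Since $P$ has degree $r$, the partial derivatives $\partial_1 P$ and $\partial_2 P$ are polynomials of total degree at most $r-1$, so there is a constant $K$ with
\[
|\nabla_{h_i} V|^q + |\nabla_{h_j} V|^q \leq K\bigl(1 + d_i^{q(r-1)} + d_j^{q(r-1)}\bigr).
\]
The hypothesis $r \leq p$, together with the elementary bound $t^{q(r-1)} \leq 1 + t^{q(p-1)}$ for $t \geq 0$, then yields \textbf{(C2)} with $\eta = d^{q(p-1)}$ after enlarging $B$ if necessary.

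With \textbf{(C1)} and \textbf{(C2)} established, the corollary to Dobrushin's criterion following Lemma~\ref{lem:dUboundnonuniform} produces a Gibbs measure $\nu$ provided $\beta$ is taken sufficiently small, and Theorem~\ref{thm:SGIglobalC1C2} supplies the $q$-SGI for $\nu$. Uniqueness follows from Lemma~\ref{lem:P}: any Gibbs measure $\nu'$ for the same local specification satisfies the DLR equation with respect to the operator $\cP$, and by part~(4) of that lemma one has $\cP^m f \to \nu' f$ almost surely, which, applied to a suitable dense class of bounded $W^{1,q}$ test functions, forces $\nu' = \nu$. I do not anticipate any substantive obstacle: the analytic ingredients are already carried by \eqref{uboundford} and the eikonal equation, and the degree restriction $r \leq p$ is precisely what matches the exponent $d^{q(p-1)}$ appearing on the right-hand side of the single-site $U$-bound. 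The only mildly delicate point is that $\nabla d$ is not defined on the centre of $\bbH$, but since this set has measure zero and $V$ still lies in the required class once one understands derivatives in the almost-everywhere sense, the computation in \textbf{(C2)} is unaffected.
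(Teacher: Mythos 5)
Your proposal matches the paper's argument almost verbatim: the paper takes $\eta = d^{q(p-1)}$, invokes the $U$-bound \eqref{uboundford} from \cite{HeZe09} for \textbf{(C1)}, and uses the eikonal equation (Proposition~\ref{prop:eikonal}) together with the degree bound $r \leq p$ to verify \textbf{(C2)}, exactly as you do, then appeals to Theorem~\ref{thm:SGIglobalC1C2} and the Dobrushin-type corollary for existence. Your uniqueness sketch via Lemma~\ref{lem:P}(4) is also the route the paper gestures at (though the paper leaves it equally implicit), so there is no substantive difference between the two proofs.
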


Let us now discuss some natural generalisations. The first pertains to whether the phase $\phi = \alpha d^p$ can be given some lower order terms, that is if $\phi$ can be a semibounded generalised polynomial of order $p > 1$. The answer is yes. 

\begin{proposition}
The conclusion of Theorem \ref{sgiford0} holds for $\phi = \alpha d^p + \tilde{P}(d)$ where $\tilde{P}$ is a generalised polynomial comprised of monomials of the form $d^\alpha$ for $0 \leq \alpha < p$. 
\end{proposition}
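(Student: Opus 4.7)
The strategy is to verify that conditions \textbf{(C1)} and \textbf{(C2)} of Theorem \ref{thm:SGIglobalC1C2} continue to hold for the perturbed phase with the same function $\eta = d^{q(p-1)}$ and the same interaction $V = P(d(x_i),d(x_j))$; existence, uniqueness, and the $q$-SGI for the resulting Gibbs measure then follow by quoting Theorem \ref{sgiford0} verbatim. Condition \textbf{(C2)} is immediate, since neither $V$ nor $\eta$ has changed. The entire burden of the proof is therefore to re-establish the single-site $U$-bound \textbf{(C1)} for the new phase $\phi = \alpha d^p + \tilde{P}(d)$.

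The key observation is that every monomial $c d^\beta$ in $\tilde{P}$ satisfies Young's inequality $|c|d^\beta \leq \varepsilon d^p + C_\varepsilon$ for any $\varepsilon > 0$, and summing the finitely many terms gives $|\tilde{P}(d)| \leq \varepsilon d^p + C_\varepsilon$ and, analogously, $|\tilde{P}'(d)| \leq \varepsilon d^{p-1} + C'_\varepsilon$. Thus the perturbation is subleading relative to $\alpha d^p$ in a precise pointwise sense, although not globally bounded.

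First I would exploit this subleading behaviour to relate $d^{q(p-1)}$ to $|\nabla_\bbH\phi|^q$ away from a bounded region. Using Proposition \ref{prop:eikonal}, $|\nabla_\bbH d|=1$ almost everywhere, so
\[
|\nabla_\bbH \phi|^q \;=\; |\phi'(d)|^q \;=\; |\alpha p\, d^{p-1} + \tilde{P}'(d)|^q.
\]
Choosing $\varepsilon = \alpha p/2$ in the above bound on $|\tilde{P}'(d)|$ yields $R_0>0$ with $\phi'(d) \geq \tfrac{\alpha p}{2} d^{p-1}$ for all $d \geq R_0$. Splitting $\int |g|^q d^{q(p-1)} e^{-\phi} dh$ into the two regions $\{d \leq R_0\}$ and $\{d > R_0\}$, the first contribution is at most $R_0^{q(p-1)} \int |g|^q e^{-\phi}dh$ and already has the form required for \textbf{(C1)}, while the second is controlled by a constant multiple of $\int |g|^q |\nabla_\bbH\phi|^q e^{-\phi} dh$.

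It then suffices to re-run the integration-by-parts argument of \cite[Theorem~2.3]{HeZe09} to prove
\[
\int |g|^q |\nabla_\bbH \phi|^q e^{-\phi}\,dh \;\leq\; B'\!\int \left(|\nabla g|^q + |g|^q\right) e^{-\phi}\,dh.
\]
Integrating by parts against the vector field $\nabla_\bbH\phi$ produces terms involving $\Delta_\bbH \phi$ and a cross term $|g|^{q-1}|\nabla g||\nabla_\bbH\phi|^{q-1}$ absorbed by a weighted Young's inequality. A direct computation using the identity $\Delta_\bbH(d^p) = p(p+Q-2)d^{p-2}$ on $\bbH$ (with homogeneous dimension $Q=4$) shows that $\Delta_\bbH \phi$ consists of a leading term of order $d^{p-2}$ coming from $\alpha d^p$ plus strictly lower-order contributions from $\tilde{P}(d)$, all of which are dominated by $|\nabla_\bbH\phi|^q$ for $d$ large and are harmless in a bounded region. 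The main delicate point, which I expect to be the principal obstacle, is that $d$ is singular on the centre of $\bbH$, so the integration by parts must be justified by a cut-off/limit procedure; this is exactly the same issue resolved in the original proof of \eqref{uboundford}, and the argument there carries over unchanged because the singular behaviour of $\phi$ near the centre is the same as that of $\alpha d^p$.
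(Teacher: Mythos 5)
Your high-level plan — keep $\eta = d^{q(p-1)}$ and $V$ unchanged, re-establish \textbf{(C1)} for the perturbed phase by splitting into $\{d \le R_0\}$ and $\{d > R_0\}$, and run a Hebisch--Zegarlinski integration by parts on the far part — tracks the paper's intent, which cites \cite[Theorems~2.1 and 2.2]{HeZe09} for exactly these two steps. However, there is a genuine gap exactly where the paper flags the ``issue with fractional exponents.'' Your claim that $\abs{\tilde P'(d)} \le \varepsilon d^{p-1} + C'_\varepsilon$ holds ``analogously'' to the bound on $\tilde P(d)$ is false as a global statement once $\tilde P$ contains a monomial $d^\alpha$ with $0<\alpha<1$: its derivative $\alpha d^{\alpha-1}$ blows up at $d=0$, whereas $\varepsilon d^{p-1}+C'_\varepsilon$ is bounded near the origin, so no choice of $\varepsilon$ and $C'_\varepsilon$ works. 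Consequently your closing remark that ``the singular behaviour of $\phi$ near the centre is the same as that of $\alpha d^p$'' is not correct at the origin: $\nabla_\bbH(\alpha d^p)$ vanishes there since $p>1$, but $\nabla_\bbH(d^\alpha)$ blows up when $\alpha<1$. The obstacle is not merely the Lipschitz singularity of $d$ along the centre axis (which already occurs in the unperturbed case and is handled by the cut-off you mention); it is a new, quantitative blow-up of $\nabla_\bbH\phi$ and $\Delta_\bbH\phi$ at $\{d=0\}$.

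This blow-up is not absorbable in a full-space integration by parts. Near the origin $\Delta_\bbH\phi \sim d^{\alpha-2}$ while $\abs{\nabla_\bbH\phi}^q \sim d^{(\alpha-1)q}$, and for $q\in(1,2]$ and $\alpha\in(0,1)$ the exponent $\alpha-2$ is strictly smaller than $(\alpha-1)q$, so $\Delta_\bbH\phi$ cannot be dominated by $\abs{\nabla_\bbH\phi}^q$ (plus a constant) near $d=0$. You therefore cannot prove the full-space bound $\int\abs{g}^q\abs{\nabla_\bbH\phi}^q e^{-\phi}\,dh \le B'\int(\abs{\nabla g}^q+\abs{g}^q)e^{-\phi}\,dh$ as stated. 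The paper's remedy is to decompose the \emph{test function} rather than the weight: take a smooth cut-off of $f$ supported near the origin (this piece needs no integration by parts since every weight involved is bounded there and is controlled directly by $\int\abs{f}^q e^{-\phi}$), and a complementary piece supported away from $\{d\le R_0\}$, on which the singularity of $\tilde P'$ has been cut away and the perturbation argument of \cite[Theorem~2.2]{HeZe09} applies verbatim. To salvage your proof you would need either this decomposition of $f$, or to localise the integration by parts to $\{d>R_0\}$ and control the resulting boundary term on $\{d=R_0\}$; as written, the plan does not go through for the fractional monomials that the proposition explicitly allows.
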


\begin{proof}
Much of this follows from the perturbation theory of \cite[Theorem~2.2]{HeZe09} which implies \eqref{uboundford} is valid for the potential $U = \alpha d^p + W$ where $W$ is any potential whose subgradient is controlled by $d^{p-1}$; in particular this allows $W$ to have lower order terms of the form $d^{p-\epsilon}$ for any $0 < \epsilon \leq p$. There is an issue with fractional exponents, e.g. $d^\epsilon$ with $0 < \epsilon < 1$, since $d^{\epsilon-1}$ is not controlled by $d^\gamma$ for any positive $\gamma > 0$ in a neighbourhood of the origin. 

Nonetheless such singularities are inconsequential; the purpose of a $U$-bound is control over what happens at infinity. To allow for $W$ having singularities near the origin, we can follow the proof of \cite[Theorem~2.1]{HeZe09}: decompose $f$ into a part localised at the origin and another at infinity. The former is controlled by $\int f^2d\mu$ modulo a constant, the latter by the arguments of \cite[Theorem~2.2]{HeZe09} since we have cut away from the singularities near the origin. 
\end{proof}

The second generalisation pertains to the interaction. One may hope that $V$ may be replaced with a generalised polynomial. Indeed, the arguments for Theorem \ref{sgiford0} did not rely on the integrality of the powers. Note that at this stage it does not appear possible to relax \bf{(C2)} to allow for singularities in the subgradients $\nabla_i V_{ij}$, $\nabla_j V_{ij}$, in the same way we allowed for singularities in the phase perturbation $\tilde{P}$. The problem lies with the fact the interaction in the statement of Theorem \ref{thm:singletoglobal} is the $U$-bound function, and we shall sidestep the unfortunate obstacle by assuming the powers are either zero or at least one. 

\begin{proposition}
The conclusion of Theorem \ref{sgiford0} holds for $V_{ij} = P(d_i, d_j)$ where $P$ is a generalised polynomial comprised of monomials of the form $d_i^\alpha d_j^\beta$ for $\alpha, \beta \in \{0\} \cup [1, \infty)$ and $0 \leq \alpha + \beta \leq p$. 
\end{proposition}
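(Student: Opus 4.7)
The plan is to verify the hypotheses \bf{(C1)} and \bf{(C2)} of Theorem \ref{thm:SGIglobalC1C2} for the phase $\phi = \alpha d^p$ and interaction $V_{ij} = P(d_i, d_j)$, taking the $U$-bound function to be $\eta = d^{q(p-1)}$. Hypothesis \bf{(C1)} is exactly the $U$-bound \eqref{uboundford} recalled from \cite[Theorem~2.3]{HeZe09}, so the entire burden lies on \bf{(C2)}. By linearity of the sub-gradient together with the elementary inequality $(a+b)^q \leq 2^{q-1}(a^q + b^q)$, and since $P$ is a finite linear combination, it suffices to verify \bf{(C2)} for a single monomial $V_{ij} = d_i^\alpha d_j^\beta$ with $\alpha, \beta \in \{0\} \cup [1, \infty)$ and $\alpha + \beta \leq p$.

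First I would compute the sub-gradient using the eikonal equation of Proposition \ref{prop:eikonal}. When $\alpha = 0$ one has $\nabla_i V_{ij} = 0$, and when $\alpha \geq 1$
\[ \abs{\nabla_i V_{ij}} = \alpha d_i^{\alpha - 1} \abs{\nabla_i d_i} \, d_j^\beta \leq \alpha d_i^{\alpha - 1} d_j^\beta \]
almost everywhere (the eikonal equation excludes the null set $\{w = 0\}$). The hypothesis $\alpha \in \{0\} \cup [1, \infty)$ is exactly what prevents $d_i^{\alpha - 1}$ from being singular near the origin; a fractional exponent $0 < \alpha < 1$ would produce a blow-up that cannot be dominated by any positive power of $d_i$ or $d_j$, and this is the principal obstacle motivating the stated assumption. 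Raising to the $q$-th power, the task reduces to controlling $\alpha^q d_i^{q(\alpha - 1)} d_j^{q\beta}$ by $B(1 + d_i^{q(p-1)} + d_j^{q(p-1)})$.

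The next step is a Young-type inequality applied to the cross term. Assuming $\alpha, \beta \geq 1$ (the edge cases being trivial), Young's inequality with conjugate exponents $\tfrac{\alpha + \beta - 1}{\alpha - 1}$ and $\tfrac{\alpha + \beta - 1}{\beta}$ gives
\[ d_i^{q(\alpha - 1)} d_j^{q\beta} \leq \frac{\alpha - 1}{\alpha + \beta - 1} d_i^{q(\alpha + \beta - 1)} + \frac{\beta}{\alpha + \beta - 1} d_j^{q(\alpha + \beta - 1)}, \]
and since $\alpha + \beta - 1 \leq p - 1$, each summand is bounded by $1 + d_i^{q(p-1)}$ or $1 + d_j^{q(p-1)}$ respectively. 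The edge cases are handled directly: if $\beta = 0$ and $\alpha \geq 1$ then $\abs{\nabla_i V_{ij}}^q \leq \alpha^q d_i^{q(\alpha - 1)} \leq \alpha^q(1 + d_i^{q(p-1)})$, and if $\alpha = 1$ and $\beta \geq 1$ then $\abs{\nabla_i V_{ij}}^q \leq d_j^{q\beta} \leq 1 + d_j^{q(p-1)}$. The analogous bounds for $\abs{\nabla_j V_{ij}}^q$ follow by symmetry. Summing over the finitely many monomials of $P$ yields \bf{(C2)} for some constant $B > 0$, and Theorem \ref{thm:SGIglobalC1C2} then delivers the $q$-SGI for $\beta > 0$ sufficiently small; uniqueness of $\nu$ follows from the same argument as in the proof of Theorem \ref{sgiford0}.
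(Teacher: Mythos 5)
Your proof is correct and follows the route the paper itself implies but leaves unwritten: the paper disposes of this proposition with the remark that "the arguments for Theorem~\ref{sgiford0} did not rely on the integrality of the powers," together with the preceding observation that the restriction $\alpha,\beta\in\{0\}\cup[1,\infty)$ is exactly what is needed to avoid singularities in $\nabla_iV_{ij}$ near the origin. You have correctly reconstructed that argument in full — verify \textbf{(C1)} by quoting the $U$-bound \eqref{uboundford}, reduce \textbf{(C2)} to a single monomial, differentiate via Proposition~\ref{prop:eikonal}, and close the cross-term $d_i^{q(\alpha-1)}d_j^{q\beta}$ with a weighted Young inequality before invoking $\alpha+\beta-1\le p-1$. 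The Young step is the only genuinely nontrivial estimate and it is exactly what the paper's phrase "controlled by $1+d_i^{q(r-1)}+d_j^{q(r-1)}$ modulo a constant" in the lead-up to Theorem~\ref{sgiford0} is silently appealing to, so your write-up is a faithful explication rather than a different proof. One cosmetic point: for a generalised polynomial with $N>2$ monomials the reduction should use $\bigl(\sum_{k=1}^N a_k\bigr)^q\le N^{q-1}\sum_k a_k^q$ rather than iterating the two-term inequality, but this does not affect correctness.
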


The third generalisation concerns again the interaction, namely the question of whether other homogeneous norms can enter. Indeed, this boils down to a simple verification of \bf{(C2)} and with enough conditions we can answer affirmatively. 

\begin{proposition}
The conclusion of Theorem \ref{sgiford0} holds for $V_{ij} = Q(N^1_i, N^1_j, \cdots, N^k_i, N^k_j)$ for $N^1, \cdots, N^k$ a collection of homogeneous norms on $\bbH$ smooth almost everywhere with bounded subgradient $\abs{\nabla N^\ell} \leq K_\ell$ for each $\ell = 1, \cdots, k$ and $Q$ a generalised polynomial comprised of monomials of the form $\prod_{\ell=1}^k (N^\ell_i)^{\alpha_\ell}(N^\ell_j)^{\beta_\ell}$ for $\alpha_\ell, \beta_\ell \in \{0\} \cup [1, \infty)$ and $0 \leq \sum_{\ell=1}^k \alpha_\ell + \beta_\ell \leq p$. In particular it holds for $k = 1$ and $N^1 = d$. 
\end{proposition}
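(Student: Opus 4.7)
My plan is to reduce the statement to a verification of conditions \textbf{(C1)} and \textbf{(C2)} (with $\phi = \alpha d^p$ and $\eta = d^{q(p-1)}$) and then invoke Theorem \ref{thm:SGIglobalC1C2} as in the proof of Theorem \ref{sgiford0}. Condition \textbf{(C1)} is untouched by the change of interaction, so the entire content is to check the gradient bound in \textbf{(C2)} for the new $V_{ij} = Q(N^1_i, N^1_j, \ldots, N^k_i, N^k_j)$.

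The first step is to reduce everything to powers of $d$. Since every homogeneous norm on the Heisenberg group is equivalent to $d$, there exist constants $c_\ell, C_\ell > 0$ with $c_\ell d \leq N^\ell \leq C_\ell d$, so any bound of the form $\prod_\ell (N^\ell_i)^{\alpha_\ell}(N^\ell_j)^{\beta_\ell} \lesssim d_i^{s} d_j^{t}$ with $s = \sum_\ell \alpha_\ell$ and $t = \sum_\ell \beta_\ell$ holds modulo constants, and similarly in reverse. Second, differentiating a single monomial using the product and chain rules gives
\[ \nabla_i \prod_{\ell=1}^k (N^\ell_i)^{\alpha_\ell}(N^\ell_j)^{\beta_\ell} = \sum_{\ell: \alpha_\ell \neq 0} \alpha_\ell (N^\ell_i)^{\alpha_\ell - 1} (\nabla_i N^\ell_i) \prod_{\ell' \neq \ell}(N^{\ell'}_i)^{\alpha_{\ell'}} \prod_{\ell'}(N^{\ell'}_j)^{\beta_{\ell'}}, \]
where the assumption $\alpha_\ell \in \{0\} \cup [1, \infty)$ is exactly what ensures the exponent $\alpha_\ell - 1$ is nonnegative whenever the term survives, so no singularity appears after differentiation. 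Using $|\nabla N^\ell| \leq K_\ell$ almost everywhere and the equivalence with $d$ yields $|\nabla_i V_{ij}| \lesssim \sum (\text{terms} \lesssim d_i^{s-1} d_j^{t})$ with $s + t \leq p$ and $s \geq 1$, and symmetrically for $|\nabla_j V_{ij}|$.

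Third, I raise to the $q$-th power and apply Young's inequality: for nonnegative reals $u, v$ and exponents $a, b \geq 0$ with $a + b \leq q(p-1)$, one has
\[ u^a v^b \leq 1 + u^{q(p-1)} + v^{q(p-1)} \]
modulo a dimensional constant, obtained from the two-term Young inequality with conjugate exponents $\frac{q(p-1)}{a}$ and $\frac{q(p-1)}{b}$ (or trivially if $a$ or $b$ is zero). In our situation $a = q(s-1)$ and $b = qt$, so $a + b = q(s + t - 1) \leq q(p-1)$ is exactly the constraint $s + t \leq p$. This produces the bound $|\nabla_i V_{ij}|^q + |\nabla_j V_{ij}|^q \leq B(1 + d_i^{q(p-1)} + d_j^{q(p-1)}) = B(1 + \eta_i + \eta_j)$, which is \textbf{(C2)} with the specified $\eta$. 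Summing over the finitely many monomials of the generalised polynomial $Q$ gives the full statement, and Theorem \ref{thm:SGIglobalC1C2} concludes the proof.

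The main obstacle I expect is the bookkeeping around the case $s = 0$ or $t = 0$ in individual monomials and around the mild singularity of $\nabla N^\ell$ at the origin; the former is handled trivially since then $\nabla_i$ or $\nabla_j$ of the monomial simply vanishes, and the latter is handled by the fact that $|\nabla N^\ell| \leq K_\ell$ is imposed only almost everywhere, which is sufficient since the Gibbs measure is absolutely continuous with respect to Lebesgue measure. The final remark that the case $k = 1$, $N^1 = d$ is included follows immediately from Proposition \ref{prop:eikonal} giving $|\nabla_\bbH d| = 1$ off the centre.
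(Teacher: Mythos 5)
Your proof is correct and follows essentially the same route as the paper: differentiate each monomial via the product rule, use $\alpha_\ell \in \{0\} \cup [1,\infty)$ to avoid singularities after differentiation, bound $|\nabla N^\ell|$ by $K_\ell$, invoke equivalence of homogeneous norms to reduce to powers of $d$, and then verify \textbf{(C2)}. The paper compresses the final estimate into the phrase ``reduces to the case $k=1$, $N^1 = d$'' (deferring to the preceding discussion of polynomials in $d$), whereas you spell out the Young-inequality step explicitly, but the content is the same.
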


Aside from $d$ we will study in the next section the Kaplan norm $N$ which (along with $d$) is a standard example of a homogeneous norm smooth almost everywhere (in fact, away from the origin) with bounded subgradient. One can also construct from two homogeneous norms another by taking, for instance, linear combinations or geometric means (or square roots of squares, say). In fact we may also work with quasinorms, that is norms which are not positive definite and which may vanish away from the origin. The prototypical example of a quasinorm is the horizontal norm $\abs{h}$ which is introduced in the next section and that plays the role of a ``degenerated Euclidean norm" acting only on a proper subset of the coordinates in $\bbH$. 

Note if a homogeneous norm is smooth away from the origin then it automatically has bounded subgradient by homogeneity considerations. However, homogeneous norms smooth almost everywhere in general need not have unbounded subgradient; one such example is the geometric mean $N^{1/2}\abs{h}^{1/2}$ of $N$ and $\abs{h}$ whose subgradient blows up along the hyperplane $\{ \abs{h} = 0 \}$. This is not a norm, but can be made into a norm, for instance by adding a small norm $\epsilon N$. Note as well a homogeneous norm need not even be smooth anywhere (let alone almost everywhere). For instance, take a smooth homogeneous norm and deform its ball nonsmoothly while preserving homogeneity and antipodal points.

\begin{remark}
Note by \cite[Proposition~5.16.4]{BLU07} that a smooth homogeneous norm is Lipschitz with respect to itself and therefore Lipschitz with respect to the Carnot-Carath\'eodory distance. By \cite[Theorem~2.2.1]{Mon01} such a norm has bounded subgradient. 
\end{remark}

\begin{proof}
By the Leibniz rule 
\[ \abs{\nabla_i \prod_{\ell=1}^k (N^\ell_i)^{\alpha_\ell}(N^\ell_j)^{\beta_\ell}} \leq \sum_{\ell=1}^k \alpha_\ell (N^\ell_i)^{\alpha_\ell-1} (N^\ell_j)^{\beta_\ell} \abs{\nabla_i N^\ell_i} \leq \sum_{\ell=1}^k \alpha_\ell K_\ell (N^\ell_i)^{\alpha_\ell-1} (N^\ell_j)^{\beta_\ell}. \]
By equivalence of homogeneous norms we can control each $N^\ell_i, N^\ell_j$ with $d_i, d_j$ respectively and it now reduces to the case $k = 1$ and $N^k = N^1 = d$. 
\end{proof}

\begin{remark}
The monomials $\prod_{\ell=1}^k (N^\ell_i)^{\alpha_\ell}(N^\ell_j)^{\beta_\ell}$ can be realised in terms of powers of two homogeneous norms $\varrho^\ell, \delta^\ell$; indeed, let us suppose that $(N^\ell)$ is a family of homogeneous norms then $(\prod_\ell (N^\ell)^{\alpha_\ell})^{1/\sum_\ell \alpha_\ell}$ is itself a homogeneous norm. So we may restate the previous proposition with $Q$ replaced by $Q_{ij} = \sum_{\ell=1}^k (\varrho^\ell_i)^{\alpha_\ell} (\delta^\ell_j)^{\beta_\ell}$, i.e. $Q$ is a sum of products of powers of homogeneous norms. 
\end{remark}

\begin{remark}
We may also be interested in interactions of the form $V_{ij}(x_i, x_j) = d(x_j^{-1} \circ x_i)$, that is replacing the Euclidean ``arithmetic subtraction" $d(x_i) - d(x_j)$ with the group subtraction $d(x_j^{-1} \circ x_i)$. In this case the inversion must appear on the correct side; by \cite[Proposition~5.2.7]{BLU07} one finds 
\[ \abs{\nabla_a d(b^{-1} \circ a)} \leq 1, \quad \abs{\nabla_b d(b^{-1} \circ a)} \leq 1 \]
since $d$ commutes with left translation so $\nabla_ad(b^{-1} \circ a) = (\nabla_ad)(\tau_{b^{-1}}(a))$ while the $\nabla_b$ bound is exactly the same since $d(h) = d(h^{-1})$. One however can show for $N$ the Kaplan norm that the subgradient in the $x_j$-coordinate of $N(x_i \circ x_j^{-1})$ is unbounded, for instance in a neighbourhood of $(x, y, z) = (1, 1, z)$. In any case by the pseudo-triangle inequality, powers of $d(x_j^{-1} \circ x_i)$ can appear in the interaction.
\end{remark}

\begin{remark} We remark that in the recent preprint \cite{KP19} the authors study some models indicated above by partially alternative techniques, focussing on interactions dependent on the Carnot-Carath\'eodory distance to achieve also the logarithmic Sobolev inequality.
See also some older references \cite{Pap10, Pap14, Pap18} by I. Papageorgiou.
\end{remark}

\subsection{The Kaplan norm $N$}
One serious deficiency in the case of the Kaplan norm $N$ is that at the time the $q$-SGI was proven for the measure $Z^{-1}e^{-\alpha N^p}$ in \cite[Theorem~4.5.5]{Ing10} for $\alpha > 0$ and $p \geq 2$, the arguments used a degenerate $U$-bound $\eta(h) = N^{p-2}\abs{h}^2$ where $\abs{h} = \abs{(x, y, z)} = (x^2 + y^2)^{1/2}$ is the horizontal norm, in the sense $\eta$ fails to diverge to infinity in all directions, in particular along the $z$-axis where $\abs{h} = x = y = 0$. But recently a $U$-bound for the Kaplan norm more suitable for our applications was derived, which we now state.

\begin{theorem}[{\cite[Theorem~1]{DaZe21a}}]\label{thm:esther}
The following $U$-bound
\[ \int \abs{g}^q N^{p-3} e^{-\alpha N^p} dh \lesssim \int \left(\abs{\nabla g}^q + \abs{g}^q\right)e^{-\alpha N^p}dh \]
holds for any $\alpha > 0$, $p, q \geq 2$, and $g$ supported away from the unit $N$-ball $\{N < 1\}$.  
\end{theorem}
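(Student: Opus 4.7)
The plan is to follow the Hebisch--Zegarlinski strategy for $U$-bounds, adapted to the specifics of the Kaplan norm on $\mathbb{H}$. The basic tool is integration by parts against the measure $d\mu = e^{-\alpha N^p} dh$ using a horizontal vector field tailored to the weight $N^{p-3}$, combined with a Young-type inequality for conjugate exponents $(q,p)$.

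First I would choose a horizontal test vector field of the form $V = N^{s} \nabla_{\mathbb{H}} N$ for a suitable exponent $s$, and compute
\[ \nabla_{\mathbb{H}} \cdot (V\mu) = \bigl[(s+3)\,N^{s-1} - \alpha p\, N^{s+p-1}\bigr]\,|\nabla_{\mathbb{H}} N|^{2}\,\mu, \]
using the explicit identities $|\nabla_{\mathbb{H}} N|^2 = |w|^2/N^2$ and $\Delta_{\mathbb{H}} N^{\beta} = \beta(\beta+2) N^{\beta-4}|w|^2$ valid on $\mathbb{H}\setminus\{0\}$. Pairing with $|g|^{q}$ and integrating, the divergence-free property of the canonical horizontal frame turns the LHS into a weighted moment of $|g|^q$ and the RHS into a cross term $-q\int |g|^{q-2}g\, \nabla_{\mathbb{H}} g\cdot V\,d\mu$. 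Choosing $s$ to expose the weight $N^{p-3}$ and then bounding the cross term by Young's inequality $ab\leq \varepsilon a^{q}+C_\varepsilon b^{p}$ absorbs a portion of the LHS and produces the desired $\int |\nabla_{\mathbb{H}} g|^{q}\,d\mu$ contribution, along with a harmless lower-order $\int |g|^{q}\,d\mu$ term.

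The \emph{main obstacle} is that this direct procedure only controls $\int |g|^{q} N^{p-3}|\nabla_{\mathbb{H}} N|^{2}\,d\mu$, because $|\nabla_{\mathbb{H}} N|$ degenerates to zero along the centre $\{w=0\}$ of $\mathbb{H}$. To obtain the clean weight $N^{p-3}$ one must compensate the loss near the $z$-axis. I would split the support of $g$ (which lies in $\{N\geq 1\}$) according to whether $|w|\geq c\,N$ or $|w|<c\,N$. On the region $\{|w|\geq c\,N\}$ one has $|\nabla_{\mathbb{H}} N|\geq c$, so the $U$-bound above delivers the claim directly. On the complementary region one has $N^{4} = |w|^{4}+16\,z^{2}$ with $|w|<cN$, so $|z|\gtrsim N^{2}$. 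Here I would introduce a compensating horizontal vector field built from the coordinate functions, for example $W = yX^{1}-xX^{2}$, which has a non-trivial $\partial_{z}$-component equal to $-2|w|^{2}\partial_{z}$ --- or more directly exploit the commutator identity $[X^{1},X^{2}] = -4\partial_{z}$ of the canonical frame to move a $\partial_{z}$-derivative onto~$g$ via two successive integrations by parts in the horizontal directions. This yields an inequality in the degenerate region where the weight $|\nabla_\mathbb{H} N|^2$ is replaced by the vertical growth $|z|/N^{2}\gtrsim 1$, completing the estimate after a further application of Young's inequality.

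Finally, the support restriction $g\equiv 0$ on $\{N<1\}$ plays two roles that I would invoke at the end: it rules out boundary contributions at the origin when the sub-Laplacian of powers of $N$ is distributional (namely $\Delta_\mathbb{H} N^{2-Q}$ is proportional to $\delta_{0}$), and it ensures that the weight $N^{p-3}$, which may blow up at the origin for $p<3$, is bounded on the support of $g$ so that all integrations by parts are legitimate by a standard approximation argument using smooth horizontal cutoffs supported in $\{N>1/2\}^{c}\cup\{N<R\}$ with $R\to\infty$.
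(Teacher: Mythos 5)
This theorem is not proved in the paper at all: it is imported verbatim from \cite[Theorem~1]{DaZe21a}, so there is no in-paper argument to compare against. I will therefore assess your sketch on its own merits.

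Your opening blueprint is the right one: integrating by parts against $d\mu=e^{-\alpha N^p}\,dh$ with a test field of the form $N^s\nabla_{\mathbb H}N$, using $\Delta_{\mathbb H}N=3|\nabla_{\mathbb H}N|^2/N$, and you correctly identify the main obstruction, namely that this only produces the weight $N^{p-3}|\nabla_{\mathbb H}N|^2=N^{p-5}|w|^2$, which degenerates on the centre. Your splitting of the support into $\{|w|\geq cN\}$ and $\{|w|<cN\}$ is also the natural move. The gap is in the degenerate region. The compensating vector field $W=yX^1-xX^2$ does \emph{not} do what you claim: computing from $X^1=\partial_1+2x_2\partial_3$, $X^2=\partial_2-2x_1\partial_3$ gives $W=y\partial_x-x\partial_y+2|w|^2\partial_z$ (note also the sign), and since $W$ annihilates $|w|$ one finds $WN=2|w|^2\partial_z N=16|w|^2z/N^3$. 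So after integrating against $\mu$ the weight that appears is $\alpha p N^{p-1}WN\sim|w|^2 N^{p-2}$, which is \emph{small} precisely on $\{|w|<cN\}$ --- the $\partial_z$-coefficient of $W$ vanishes exactly where compensation is needed. Your statement that $W$ yields ``the vertical growth $|z|/N^2\gtrsim1$'' in place of $|\nabla_{\mathbb H}N|^2$ is therefore false; the actual weight retains the fatal factor $|w|^2$.

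The commutator alternative $[X^1,X^2]=-4\partial_z$ is the standard way to manufacture a vertical derivative from horizontal ones, but it is left as a gesture rather than carried out, and there is a real danger the exponents do not close. Applying $X^i$ to a function of $z$ alone produces a coefficient $\pm2x_i$, which is again $O(|w|)$ and small in the degenerate region; tracking this through the two successive integrations by parts together with a Young inequality for the conjugate pair $(q,q/(q-1))$ leaves unbalanced positive powers of $N$ on the gradient term in a direct attempt. Without seeing that the bookkeeping actually closes (possibly with a further bootstrap or an additional vector field adapted to the $z$-axis), I cannot accept this as a proof. In short: correct framing, correct identification of the difficulty, but the compensation step --- the entire content of the theorem --- is wrong in the first variant and unverified in the second.
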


\begin{remark}
By the perturbation theory of \cite[Theorem~2.2]{HeZe09} the result can be extended to any semibounded polynomial of order $p \geq 2$. Note of course $N^{p-3}$ does not blow up at infinity unless $p > 3$. 
\end{remark}

If $p > 3$, then $\eta = N^{p-3}$ diverges to infinity in all directions as required, and the $U$-bound can be extended to all of $\bbH$ as $N^{p-3} \leq 1$ in $\{N < 1\}$, meaning we obtain the $U$-bound
\begin{equation}\label{uboundforn}
\int \abs{g}^qN^{p-3}e^{-\alpha N^p}dh \lesssim \int \left(\abs{\nabla g}^q + \abs{g}^q\right)e^{-\alpha N^p}dh
\end{equation}
for $\alpha > 0$, $p > 3$, and $q \geq 2$. Note as before it suffices to consider $q = 2$. By the same arguments for the Carnot-Carath\'eodory distance the $U$-bound for $\phi = \alpha N^p$ and $\eta = N^{p-3}$ furnishes a $2$-SGI for the Gibbs measure $\nu$. Note the $q$-SGI of \cite[Theorem~4.5.5]{Ing10} in the $1$-dimensional setting holds for $p \geq 2$ and with $q < 2$ the H\"older conjugate of $p$, so the $2$-SGI obtained in the infinite-dimensional setting here is weaker in that $p$ must be larger than $2$, and $q$ cannot be less than $2$. The arguments for generalising the phase and interaction are identical. 

\begin{theorem}\label{sgiforN}
Let $\alpha > 0$ and $p > 3$. Define a local specification on $\bbH^{\bbZ^D}$ by the following potential:
\[ U_\Lambda^\omega(x_\Lambda) = \alpha \sum_{i \in \Lambda} \phi(x_i) + \sum_{i, j \in \Lambda} \beta_{ij} Q(x_i, x_j) + \sum_{i \in \Lambda, \, j \notin \Lambda} \beta_{ij} Q(x_i, \omega_j). \]
Suppose that:
\begin{enumerate}
    \item $\phi$ is a semibounded generalised polynomial in $N$ of order $p$ with leading term $\alpha N^p$, 
    \item $\abs{\beta_{ij}} \leq \beta$ for all $i, j \in \bbZ^D$, and
    \item $Q(x_i, x_j) = \sum_{\ell=1}^k c^\ell \varrho^\ell(x_i)^{\alpha_\ell} \delta^\ell(x_j)^{\beta_\ell}$ is a finite sum of $k \in \bbZ_{\geq 1}$ products of powers of homogeneous norms $(\varrho^\ell, \delta^\ell)_{\ell = 1, \cdots, k}$ with bounded subgradient and for $c^\ell \in \bbR$, $\alpha^\ell, \beta^\ell \in \{0\} \cup [1, \infty)$ and $0 \leq \alpha^\ell + \beta^\ell \leq p$.
\end{enumerate}
Then for all sufficiently small $\beta > 0$, there exists a unique Gibbs measure $\nu$ which satisfies the $2$-spectral gap inequality 
\[ \nu \abs{f - \nu f}^2 \leq C_{SG} \nu \abs{\nabla f}^2. \]
\end{theorem}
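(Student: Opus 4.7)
The plan is to reduce everything to Theorem~\ref{thm:SGIglobalC1C2} with $q=2$ and with $\eta(h)=N(h)^{p-3}$, which diverges to infinity in all directions precisely because $p>3$. The proof then consists of verifying the two conditions \textbf{(C1)} and \textbf{(C2)} in this setting.

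For \textbf{(C1)}, I would start from Theorem~\ref{thm:esther}, which gives the bound for test functions $g$ supported in the exterior of the unit $N$-ball. On $\{N<1\}$ one has $N^{p-3}\le 1$, so the contribution of that region is absorbed into $\int |g|^q e^{-\alpha N^p}\,dh$ automatically; a partition-of-unity argument between the two regimes then produces the global $U$-bound \eqref{uboundforn} on all of $\bbH$. To extend from the pure leading phase $\alpha N^p$ to the full semibounded generalised polynomial $\phi$, I would invoke the perturbation result \cite[Theorem~2.2]{HeZe09} exactly as in the discussion following Theorem~\ref{sgiford0}: the subleading monomials have sub-gradients dominated by $N^{p-1}$, and the standard localisation near the origin handles any fractional singularities coming from non-integer exponents.

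For \textbf{(C2)}, I would differentiate each monomial $\varrho^{\ell}(x_i)^{\alpha_\ell}\delta^{\ell}(x_j)^{\beta_\ell}$ of $Q$ via the Leibniz rule. The restriction $\alpha_\ell,\beta_\ell\in\{0\}\cup[1,\infty)$ is imposed precisely so that no singular factor of the form $\varrho^\ell(x_i)^{\alpha_\ell-1}$ with negative exponent appears after differentiation. Using the uniform bounds $|\nabla \varrho^\ell|,|\nabla \delta^\ell|\le K_\ell$ assumed in the hypothesis and the equivalence of homogeneous norms (so each $\varrho^\ell,\delta^\ell$ is comparable to $N$), one obtains after squaring an estimate schematically of the form
\[
|\nabla_{x_i}Q|^2+|\nabla_{x_j}Q|^2 \;\lesssim\; 1+\sum_\ell\bigl(N(x_i)^{2(\alpha_\ell-1)}N(x_j)^{2\beta_\ell}+N(x_i)^{2\alpha_\ell}N(x_j)^{2(\beta_\ell-1)}\bigr).
\]
A Young-type inequality $ab\le \varepsilon a^r+C_\varepsilon b^s$ then splits each mixed term into single-variable powers of $N(x_i)$ and $N(x_j)$; the constraint $\alpha_\ell+\beta_\ell\le p$ together with an appropriate choice of conjugate exponents $r,s$ makes each resulting power fit inside $N^{p-3}$, delivering \textbf{(C2)} with $\eta=N^{p-3}$.

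Once \textbf{(C1)} and \textbf{(C2)} are in place, Theorem~\ref{thm:SGIglobalC1C2} directly produces a Gibbs measure $\nu$ satisfying the $2$-SGI for all sufficiently small $\beta>0$. Uniqueness is read off from the proof of Theorem~\ref{thm:singletoglobal}, since part \textbf{(4)} of Lemma~\ref{lem:P} shows $\cP^m f\to \nu(f)$ $\nu$-a.s.\ for every bounded $f\in W^{1,2}(\nu)$, which pins down any candidate Gibbs measure on a determining class. The main technical obstacle is precisely the Young-inequality bookkeeping in \textbf{(C2)}: the Kaplan $U$-bound is genuinely weaker than its Carnot--Carath\'eodory counterpart (one controls only $N^{p-3}$, not $N^{q(p-1)}$), so there is less slack with which to absorb cross terms from the interaction, and the constraint linking $\alpha_\ell+\beta_\ell$ to $p$ must be exploited more carefully than in the proof of Theorem~\ref{sgiford0}.
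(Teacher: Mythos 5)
Your proposal follows the same route as the paper, which is essentially: feed $\eta=N^{p-3}$ and the Kaplan $U$-bound (Theorem~\ref{thm:esther}, extended to all of $\bbH$ via $N^{p-3}\le 1$ on $\{N<1\}$ and perturbation theory from \cite{HeZe09}) into Theorem~\ref{thm:SGIglobalC1C2}. The paper itself is terse here, saying only that ``the arguments for generalising the phase and interaction are identical'' to the Carnot--Carath\'eodory case, so your filling-in of the \textbf{(C2)} step is where the real content lies.

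However, that step as you wrote it does not go through, and this is a genuine gap. With $q=2$ and $\eta=N^{p-3}$, condition \textbf{(C2)} requires $|\nabla_{h_1}V|^2+|\nabla_{h_2}V|^2\lesssim 1+N(h_1)^{p-3}+N(h_2)^{p-3}$. Differentiating a monomial of $Q$ gives, as you correctly write, a term of degree $2(\alpha_\ell-1)+2\beta_\ell$ after squaring, and after Young's inequality the best you can do is trade $N(x_i)^{2(\alpha_\ell-1)}N(x_j)^{2\beta_\ell}$ for single-site powers of total degree $2(\alpha_\ell+\beta_\ell-1)$. Young's inequality splits a product across the two variables but cannot lower the total degree. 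So you need $2(\alpha_\ell+\beta_\ell-1)\le p-3$, i.e.\ $\alpha_\ell+\beta_\ell\le (p-1)/2$, whereas the hypothesis you invoke is $\alpha_\ell+\beta_\ell\le p$. For any $p>3$ the range $(p-1)/2<\alpha_\ell+\beta_\ell\le p$ produces terms of degree up to $2p-2$, which is strictly larger than $p-3$ for every $p$, and no choice of conjugate exponents can salvage this. Concretely, taking a single monomial with $\alpha_\ell=\beta_\ell=p/2$ yields $|\nabla_{x_i}Q|^2\sim N(x_i)^{p-2}N(x_j)^{p}$, which is not $\lesssim 1+N(x_i)^{p-3}+N(x_j)^{p-3}$. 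You should replace the claim ``the constraint $\alpha_\ell+\beta_\ell\le p$ \ldots makes each resulting power fit inside $N^{p-3}$'' by the correct bookkeeping: the admissible degree range with $\eta=N^{p-3}$ is $\alpha_\ell+\beta_\ell\le (p-1)/2$. This is strictly smaller than the degree range $\le p$ that works in the Carnot--Carath\'eodory case of Theorem~\ref{sgiford0}, precisely because there the $U$-bound controls $d^{q(p-1)}=d^{p}$ (with $q$ conjugate to $p$) while here the Kaplan $U$-bound only controls $N^{p-3}$ with $q=2$ forced; the two ``identical arguments'' therefore give genuinely different constraints, and the theorem as stated needs this correction in order for your (and the paper's) verification of \textbf{(C2)} to close.
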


At this point, we remark that although the problem of replacing the phase with a different function (e.g. a different homogeneous norm not $d$ nor $N$) is more delicate as it requires the existence of a $U$-bound as in \bf{(C1)}, the issue of finding valid interactions is generally easier. For instance, we have not done so here to simplify the exposition, but we may easily adapt our analysis to the case where the generalised polynomials are given nonconstant smooth coefficients. In that case it is straightforward to see that it suffices the order $\alpha + \beta$ of the monomial $\psi \varrho^\alpha \delta^\beta$ satisfy $\alpha + \beta \leq p - 1 < p$ and that $\psi$ be bounded with bounded subgradient. (By the Leibniz rule there is an extra term $\varrho^\alpha \delta^\beta \nabla \varrho \lesssim \varrho^\alpha \delta^\beta$.) Nonetheless for the exposition given here the choice is since the ``typical" objects of interest are usually homogeneous norms. 

\begin{remark}
Interactions given in terms of a (combination) of semibounded homogeneous polynomial would be interesting here. We remark that generally one can represent them as potentials given in terms of some homogeneous norms plus some correction (which could be possibly rearranged to a function of some other homogeneous norms). A nice example of such a reduction is provided in \cite{DaWaZe22} where it was proved that a sum of squares of harmonic functions for the sub-Laplacian in Heisenberg group can be expressed in terms of powers of the corresponding Kaplan norm.
\end{remark}

\begin{remark}
We remark that given a phase $\phi$ which depends on a smooth homogeneous norm
one can improve the properties of the one site measure by adding a potential with singularity in the horizontal direction \cite{DaZe21b}.
\end{remark}

\section{Generalisations to other nilpotent Lie groups}

The method presented in this paper to prove $q$-SGIs can be extended to a wider class of spaces not and need not be restricted to the Heisenberg group. For instance, the results can be used to prove $q$-SGIs for Gibbs measures on $\bbR^n$. It can be shown using \cite[Theorem~2.4]{HeZe09} that the $U$-bound \eqref{uboundford} for the Carnot-Carath\'eodory distance $d$ and the sub-gradient $\nabla_\bbH$ is entirely analogous in the Euclidean case, in particular it holds with the Euclidean norm $\abs{x}$ replacing $d$ and the Euclidean gradient $\nabla_{\bbR^n}$ replacing $\nabla_\bbH$, which extends the results of \cite[Theorem~7.2]{BoZe05} in analogue with Theorem \ref{sgiford0}. This is since the main ingredients in obtaining the $U$-bound is firstly the fact $d$ satisfies the eikonal equation almost everywhere, a property shared with the Euclidean norm, and secondly the property $d\Delta_\bbH d$ is bounded above by some constant $K$ in the sense of distributions, a property which the Euclidean norm satisfies with $K = n-1$. 

\begin{theorem}\label{sgiEuclidean}
Let $\alpha > 0$ and $p \geq 2$. Define a local specification on $(\bbR^n)^{\bbZ^D}$ by the following potential:
\[ U_\Lambda^\omega(x_\Lambda) = \alpha \sum_{i \in \Lambda} \abs{x_i}^p + \sum_{i, j \in \Lambda} \beta_{ij} Q(x_i, x_j) + \sum_{i \in \Lambda, \, j \notin \Lambda} \beta_{ij} Q(x_i, \omega_j) \]
with $\abs{\beta_{ij}} < \beta$ for all $i, j \in \bbZ^D$, and for
\[ Q(x_i, x_j) = \sum_{k=1}^N \abs{x_i}^{r_k}\abs{x_j}^{s_k} \]
a finite sum of $N \in \bbZ_{\geq 1}$ monomials in $\abs{x_i}, \abs{x_j}$ of degree at most $p$ with nonnegative exponents $r_k, s_k$ for $k = 1, 2, \cdots, N$ belonging to $\{0\} \cup [1, \infty)$. Then for $q$ H\"older conjugate to $p$ and for all sufficiently small $\beta > 0$, there exists a unique Gibbs measure $\nu$ which satisfies the $q$-spectral gap inequality
\[ \nu\abs{f - \nu f}^q \leq C_{SG}\nu\abs{\nabla f}^q \]
for $f \in W^{1, q}(\nu)$.
\end{theorem}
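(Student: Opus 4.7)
The plan is to verify conditions \textbf{(C1)} and \textbf{(C2)} of Theorem \ref{thm:SGIglobalC1C2} in the Euclidean setting and then apply that theorem directly. Existence and uniqueness of the Gibbs measure $\nu$ will follow from the Dobrushin-type argument of Section~4 together with Lemma~\ref{lem:P}(4), both of which rely only on \textbf{(C1)} and the general structure of the local specification, not on any Heisenberg-specific feature.

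For \textbf{(C1)}, I would invoke \cite[Theorem~2.4]{HeZe09}, which establishes the Euclidean $U$-bound
\begin{equation*}
\int |g|^q |x|^{q(p-1)} e^{-\alpha |x|^p}\, dx \;\lesssim\; \int \left( |\nabla g|^q + |g|^q \right) e^{-\alpha |x|^p}\, dx
\end{equation*}
for any $\alpha > 0$, $p > 1$, and $q \geq 1$, with $\nabla$ the standard Euclidean gradient on $\bbR^n$; the proof of this bound depends only on the two ingredients noted in the introduction to this section, namely the eikonal identity $|\nabla |x|| = 1$ away from the origin and the distributional bound $|x|\Delta |x| \leq n-1$. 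Setting $\eta(x) = |x|^{q(p-1)}$ and $\phi(x) = \alpha |x|^p$ then yields \textbf{(C1)}, and $\eta$ clearly diverges to infinity in all directions of $\bbR^n$. As in the proof of Theorem~\ref{sgiford0}, lower-order generalised-polynomial corrections to the phase may subsequently be absorbed via the perturbation result \cite[Theorem~2.2]{HeZe09}.

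Next, to verify \textbf{(C2)}, I would exploit the fact that $Q(x_i, x_j)$ is a finite sum of monomials $|x_i|^{r_k}|x_j|^{s_k}$ with $r_k, s_k \in \{0\} \cup [1, \infty)$ and $r_k + s_k \leq p$. For each such monomial the Leibniz rule gives
\begin{equation*}
|\nabla_i (|x_i|^{r_k}|x_j|^{s_k})| = r_k\, |x_i|^{r_k - 1} |x_j|^{s_k},
\end{equation*}
which is well-defined by the constraint $r_k \in \{0\} \cup [1, \infty)$ (the case $r_k = 0$ being trivial). Raising to the $q$-th power and using the elementary bound $|x_i|^a |x_j|^b \leq (|x_i| + |x_j|)^{a+b}$ for $a, b \geq 0$, together with $(u+v)^{q(p-1)} \leq C(1 + u^{q(p-1)} + v^{q(p-1)})$, yields
\begin{equation*}
|\nabla_i Q(x_i, x_j)|^q + |\nabla_j Q(x_i, x_j)|^q \;\lesssim\; 1 + |x_i|^{q(p-1)} + |x_j|^{q(p-1)} = 1 + \eta(x_i) + \eta(x_j),
\end{equation*}
which is exactly \textbf{(C2)}. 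An application of Theorem~\ref{thm:SGIglobalC1C2} for $\beta > 0$ small enough then produces the desired global $q$-SGI. The only substantive subtlety is the exponent arithmetic: the hypothesis $r_k, s_k \in \{0\} \cup [1, \infty)$ is what averts singularities of $\nabla_i Q$ or $\nabla_j Q$ near the origin, while $r_k + s_k \leq p$ is precisely the condition that ensures each surviving power of $|x_i|, |x_j|$ is dominated by $\eta = |x|^{q(p-1)}$; transferring the Heisenberg arguments of Section~5 to the Euclidean setting requires no change beyond this substitution.
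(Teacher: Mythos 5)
Your proposal is correct and follows essentially the same route as the paper: the paper simply observes that the eikonal identity $\lvert\nabla\lvert x\rvert\rvert=1$ and the distributional bound $\lvert x\rvert\Delta\lvert x\rvert\leq n-1$ make the $U$-bound of \cite[Theorem~2.4]{HeZe09} available in the Euclidean setting, so that the verification of \textbf{(C1)}--\textbf{(C2)} and the invocation of Theorem~\ref{thm:SGIglobalC1C2} transfer verbatim from the Heisenberg arguments of Section~5 with $d$ replaced by $\lvert x\rvert$. Your write-up is in fact more explicit than the paper's sketch (which merely asserts the transfer), and your exponent arithmetic and the identification $\eta=\lvert x\rvert^{q(p-1)}=\lvert x\rvert^{p}$ match the paper's intended reduction to Theorem~\ref{sgiford0}.
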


The reason is because in our proof we made no use of any particular properties of the group; thus one could go through the proof and interpret $\nabla$ as the Euclidean gradient everywhere, that is with $X^1, X^2$ replaced by the trivial vector fields $\partial_1, \partial_2, \partial_3$, and in general with $\nabla$ any finite collection of vector fields on $\bbR^n$. That being said, one should not expect the $U$-bounds to come for free, and in general one should certainly expect the $U$-bound ingredients $\nabla$, $\phi$, and $\eta$, be intimately related. The class of nilpotent Lie groups in this regard are our typical examples because their structure gives rise to a rich family of nicely behaved examples. 

\begin{acknowledgements}
The first author is supported by the London Mathematical Society Early Career Fellowship.
\end{acknowledgements}

\printbibliography

\end{document}